\numberwithin{equation}{section}
\numberwithin{figure}{section}
\def\naive{{na\"ive}}
\def\cf{cf.~}
\newcommand{\hsp}[1]{{\hbox{\hspace{#1}}}}
\newcommand{\bmath}[1]{{\bf \boldmath {#1} \unboldmath}}
\newcommand{\mystack}[2]{\ensuremath{ \substack{ \hbox{\tiny{${#1}$}} \\ \hbox{\tiny{${#2}$}} }} }
\def\a{\alpha}  
\def\b{\beta}  
\def\d{\delta}  
\def\e{\varepsilon}
\def\m{\mu}
\def\s{\sigma}
\def\w{\omega} 
\def\x{\xi}
\def\fa{\mathfrak{a}} 
\def\tAd{\mathrm{Ad}} 
\def\tAut{\mathrm{Aut}}
\def\fb{\mathfrak{b}} 
 \def\tbd{\mathrm{bd}}
\def\bC{\mathbb C}
\def\bc{\mathbf{c}}
\def\tcodim{\mathrm{codim}} \def\tcpt{\mathrm{cpt}}
 \def\sD{\mathscr{D}}
 \def\tdim{\mathrm{dim}}
\def\tEnd{\mathrm{End}} 
 \def\fe{\mathfrak{e}} 
\def\texp{\mathrm{exp}}
\def\ff{\mathfrak{f}}
\def\tFlag{\mathrm{Flag}}
\def\tGr{\mathrm{Gr}}
\def\fg{{\mathfrak{g}}}
\def\bH{\mathbb{H}}
\def\tHom{\mathrm{Hom}}
\def\fh{\mathfrak{h}} 
\def\bh{\mathbf{h}}
\def\bi{\mathbf{i}}
 \def\tIm{\mathrm{Im}}
\def\fk{\mathfrak{k}}
 \def\tker{\mathrm{ker}}
\def\fl{\mathfrak{l}} 
\def\tlim{\mathrm{lim}}
\def\bN{\mathbb N} \def\cN{\mathcal N}
 \def\tnilp{\mathrm{nilp}}
 \def\cO{\mathcal O}
\def\bP{\mathbb P}
\def\fp{\mathfrak{p}} 
\def\tprim{\mathrm{prim}}
\def\bQ{\mathbb Q}
\def\bR{\mathbb R}
\def\fr{\mathfrak{r}}
\def\trank{\mathrm{rank}}
\def\fs{\mathfrak{s}}
\def\tss{\mathrm{ss}}
\def\tSL{\mathrm{SL}} \def\tSO{\mathrm{SO}}
\def\tSp{\mathrm{Sp}} 
\def\tStab{\mathrm{Stab}}
 \def\tspan{\mathrm{span}}
\def\fsl{\mathfrak{sl}} \def\fso{\mathfrak{so}} 
\def\fsp{\mathfrak{sp}} \def\fsu{\mathfrak{su}}
 \def\sT{\mathscr{T}} 
\def\ft{\mathfrak{t}}
\def\sX{\mathscr{X}}
   \def\bZ{\mathbb Z}
\def\fz{\mathfrak{z}} 
\def\half{\tfrac{1}{2}}
\def\one{\mathbbm{1}}
\def\tand{\quad\hbox{and}\quad}
\def\del{\partial}
\def\sb{{\hbox{\tiny{$\bullet$}}}}
\def\inj{\hookrightarrow}
\def\op{\oplus}
\def\ot{\otimes}
\def\wtL{{\Lambda_\mathrm{wt}}}
\def\rtL{{\Lambda_\mathrm{rt}}}
\newcounter{numcnt}
\newcounter{cnt}
\newcounter{acnt}
\newenvironment{a_list}{ % modest indent a.) , etc.
  \begin{list}{{(\alph{acnt})}}
   {\usecounter{acnt} \setlength{\itemsep}{3pt}
    \setlength{\leftmargin}{25pt} \setlength{\labelwidth}{20pt} }
   }
   {\end{list}}
\newenvironment{a_list_emph}{ % modest indent a.) , etc.
  \begin{list}{{\emph{(\alph{acnt})}}}
   {\usecounter{acnt} \setlength{\itemsep}{3pt}
    \setlength{\leftmargin}{25pt} \setlength{\labelwidth}{20pt} }
   }
   {\end{list}}
\newcounter{Acnt}
\newcounter{icnt}
\newenvironment{i_list_emph}{ % modest indent a.) , etc.
  \begin{list}{{\emph{(\roman{icnt})}}}
   {\usecounter{icnt} \setlength{\itemsep}{3pt}
    \setlength{\leftmargin}{25pt} \setlength{\labelwidth}{20pt} }
   }
   {\end{list}}
\newenvironment{i_list}{ % modest indent a.) , etc.
  \begin{list}{{(\roman{icnt})}}
   {\usecounter{icnt} \setlength{\itemsep}{3pt}
    \setlength{\leftmargin}{25pt} \setlength{\labelwidth}{20pt} }
   }
   {\end{list}}
\newcounter{Icnt}
\newcounter{exam_cnt}
\newcounter{mccnt}
\newenvironment{bcirclist}{ 
  \begin{list}{\boldmath$\circ$\unboldmath}
   {\usecounter{cnt} \setlength{\itemsep}{2pt}
    \setlength{\leftmargin}{15pt} \setlength{\labelwidth}{20pt} }
   }
   {\end{list}}
\newtheorem{corollary}[equation]{Corollary}
\newtheorem*{corollary*}{Corollary}
\newtheorem{lemma}[equation]{Lemma}
\newtheorem*{lemma*}{Lemma}
\newtheorem{proposition}[equation]{Proposition}
\newtheorem*{proposition*}{Proposition}
\newtheorem{theorem}[equation]{Theorem}
\newtheorem*{theorem*}{Theorem}
\theoremstyle{definition}
\newtheorem*{boldQ*}{Question}
\newtheorem*{boldP*}{Problem}
\theoremstyle{remark}
\newtheorem*{assume*}{Assume}
\newtheorem*{answer*}{Answer}
\newtheorem*{claim*}{Claim}
\newtheorem{definition}[equation]{Definition}
\newtheorem*{definition*}{Definition}
\newtheorem{example}[equation]{Example}
\newtheorem*{example*}{Example}
\newtheorem*{hint*}{Hint}
\newtheorem*{notation*}{Notation}
\newtheorem{remark}[equation]{Remark}
\newtheorem*{remark*}{Remark}
\newtheorem*{remarks*}{Remarks}
\newtheorem*{fact*}{Fact}
\newtheorem*{emphL*}{Lemma}
\newtheorem*{emphQ*}{Question}
\newtheorem*{emphA*}{Answer}
\newtheorem*{convention}{Convention}
\numberwithin{HWeq}{section}
\theoremstyle{definition}
\def\tcpt{\mathit{c}}
\def\ncpt{\mathit{nc}}
\theoremstyle{theorem}
\newtheorem*{propI5}{Proposition \ref{P:I5}}
\newtheorem*{thmI8}{Theorem \ref{T:I8}}
\begin{document}
\title[Extremal degenerations of PHS's]{Extremal degenerations of polarized Hodge structures}
\author[Green]{Mark Green}
\email{mlg@ipam.ucla.edu}
\address{Department of Mathematics, University of California at Los Angles, Los Angeles, CA 90095}
\author[Griffiths]{Phillip Griffiths}
\email{pg@ias.edu}
\address{Institute for Advanced Study, Einstein Drive, Princeton, NJ 08540}
\author[Robles]{Colleen Robles}
\email{robles@math.tamu.edu}
\address{Mathematics Department, Mail-stop 3368, Texas A\&M University, College Station, TX  77843-3368} 
\thanks{Robles is partially supported by NSF grant DMS-1309238.  This work was undertaken while Robles was a member of the Institute for Advanced Study; she thanks the institute for a wonderful working environment and the Robert and Luisa Fernholz Foundation for financial support.}
\date{\today}
%\begin{abstract}

%\end{abstract}

\keywords{Variation of Hodge structure, reduced limit period mapping, flag variety, flag domain}
\subjclass[2010]
{
 14D07, 32G20, % Variation of Hodge structures.
 %14N15,  % AG : Classical problems, Schubert calculus
 14M15, % AG: Grassmannians, Schubert varieties, flag manifolds
 14M17. % AG: Homogeneous spaces
 %14C30, % Hodge theory (transcendental methods, Hodge conj)
 %58A14, % Hodge theory (under global analysis).
 %53A55, % Differential Invariants
 %53C10, % G-structures
 %53C24, % Rigidity results
 %53C29, % Issues of holonomy
 %53C30, % Homogeneous manifolds
 %53C38, % Calibrations and calibrated geometries
 %58A15, % EDS (Cartan Thy)
 %58A17. % Pfaffian systems
}
\maketitle

\setcounter{tocdepth}{1}

\tableofcontents
%\listoffigures

%------------------------------------------------------------------------------
\section{Introduction}
%------------------------------------------------------------------------------

An interesting question in algebraic geometry is: \emph{In what ways can a smooth projective variety $X$ degenerate?}  Here one imagines a situation
\begin{equation} \label{E:I1}
  \sX \ \stackrel{\pi}{\longrightarrow} \ S
\end{equation}
where $\sX$ and $S$ are complex manifolds with $\sX \subset \bP^N$ and where $\pi$ is a proper holomorphic mapping with $X$ a smooth fibre.  Then $\pi$ is a holomorphic submersion over a Zariski open set $S^*\subset S$, and one is interested in which varieties $X_s = \pi^{-1}(s)$ can arise when $s \in S \backslash S^*$.  The question of course needs refinement; e.g., by assuming some sort of semi--stable reduction for \eqref{E:I1} (\cf\cite{MR1738451}).

Hodge theory provides an invariant associated to \eqref{E:I1}.  Namely there is a period mapping
\begin{equation} \label{E:I2}
  \Phi : S^* \to \Gamma \backslash D
\end{equation}
where $D = G_\bR/R$ is a Mumford--Tate domain and for $s \in S^*$,
\begin{center}
  $\Phi(s)$ is the polarized Hodge structure on $H^n(X_s,\bQ)_\tprim$.
\end{center}
The ambiguity in the identification of $H^n(X_s,\bQ)_\tprim$ with a fixed vector space $V$ is given by the image of the monodromy representation
\[
  \rho : \pi_1(S^*) \to \Gamma \subset G \,.
\]
We note that the invariant \eqref{E:I2} of \eqref{E:I1} only depends on the family over $S^*$; it does not depend on the generally non-unique semi-stable reduction, although as we shall see it strongly limits what the singular fibres can be.

There are two ways of attaching Hodge--theoretic data to the limits
\begin{equation}\label{E:I3}
  \lim_{s\to s_o} \Phi(s) \,;
\end{equation}
these data will then reflect the specialization $X_s \to X_{s_o}$.  The first, and traditional, way is to think of \eqref{E:I3} as giving a \emph{limiting mixed Hodge structure}.  Specifically, following \cite{MR840721} and \cite{MR2465224} and taking $S \backslash S^*$ to a be a local normal crossing divisor, one attaches to $D$ a set $B(\Gamma)$ of equivalence classes of limiting mixed Hodge structures and extends \eqref{E:I2} to an \emph{extended period mapping}
\[
  \Phi_e : S \to \Gamma \backslash \left(D \cup B(\Gamma) \right) \,.
\]
One may roughly think of $\Phi_e(s_o)$ as containing a \emph{maximal} amount of Hodge--theoretic information in the limit.

To explain the second, more recent method we assume that $S = \Delta$ is the unit disc and $S^* = \Delta^*$ the punctured disc so that \eqref{E:I1} becomes 
\begin{equation}\label{E:I4}
  \Phi : \Delta^* \to \Gamma_T\backslash D
\end{equation}
where $T$ is the unipotent monodromy transformation with logarithm $N$ and $\Gamma_T = \{ T^k\}_{k \in \bN}$.  Then \eqref{E:I4} may be lifted to a mapping of the upper--half plane 
\[
  \tilde \Phi : \bH \to D \,,
\]
and following \cite[Appendix to Lecture 10]{MR3115136} and \cite{KP2013} we define\footnote{Precise definitions of all notions discussed in this introduction are given in later sections.} the \emph{reduced limit period mapping} associated to \eqref{E:I4} by 
\begin{equation}\label{E:I5}
  \lim_{z \to \infty} \tilde\Phi(z) \ \in \ \del D
\end{equation}
where $D$ is embedded in its compact dual $\check D$ and $\del D \subset \check D$, \cf Section \ref{S:rlpm}.

To explain this a bit more, in the situation \eqref{E:I4} the boundary component $B(\Gamma)$ referred to above becomes 
\[
  B(N) \ = \ \left\{
  \begin{array}{l}
  \hbox{equivalence classes of limiting mixed Hodge structures}\\
  \hbox{$(V,W_\sb(N),F^\sb)$ with monodromy weight filtration $W_\sb(N)$}
  \end{array} \right\}\,.
\]
There is then a mapping 
\[
  \Phi_\infty : B(N) \to \del D \,,
\]
and the reduced limit period mapping \eqref{E:I5} is the composition of this mapping with $\Phi_e(s_o)$, where $s_o = 0 \in \Delta$.  We will abbreviate it by $\Phi_\infty(s_o)$. It is well--defined since \eqref{E:I5} is a fixed point of $T$.  We may roughly think of $\Phi_\infty(s_o)$ as containing the \emph{minimal} amount of Hodge--theoretic information in the limit \eqref{E:I3}.  For the classical case of weight $n=1$ polarized Hodge structures, $\Phi_e$ corresponds to a toroidal compactification \cite{MR0457437} and $\Phi_\infty$ to the Satake--Bailey--Borel compactification \cite{MR0216035,MR0170356}.

One advantage of the reduced limit period mapping is that it maps to a space on which the group $G_\bR$ acts.  One may then use the rich and well understood structure of the partially ordered lattice of $G_\bR$--orbits in $\del D$ to define what is meant by extremal degenerations of a polarized Hodge structure, \cf Section \ref{S:dfn}.  Specifically, a $G_\bR$--orbit $\cO \subset \del D$ is said to be \emph{polarized relative to the Mumford--Tate domain $D$} in case there is a period mapping \eqref{E:I4} whose reduced limit period lies in $\cO$, \cf Definition \ref{d:pol}.   When the infinitesimal period relation is bracket--generating, all orbits in $\del D$ of real codimension one in $\check D$ are polarizable relative to some Mumford--Tate domain structure on the open $G_\bR$--orbit $D$, \cf \cite{MR3115136, KP2013} or Section \ref{S:codim=1}.  The unique closed orbit in $\del D$ is sometimes, but not always, polarizable.  The general question of polarizability is discussed in \cite{KP2013, KR1}.

A degeneration \eqref{E:I4} of a polarized Hodge structure $\Phi(s)$ is said to be \emph{minimal} if the reduced limit period lies in a codimension--one $G_\bR$--orbit; it is said to be \emph{maximal} if its reduced limit lies in an orbit whose closure does not lie in a proper sub-orbit that is polarizable relative to $D$, \cf Definition \ref{d:extremal}.  One way to think of this is the following: Points of $D$ are given by polarized Hodge structures $(V,Q,F^\sb)$.  Then arbitrary period mappings may be well--approximated by nilpotent orbits with the same limit period mapping.  Thus a minimal degeneration of $F^\sb \in D$ is given by a nilpotent orbit such that 
\[
  \lim_{z\to\infty} e^{zN} \cdot F^\sb \ \hbox{ lies in a codimension--one $G_\bR$--orbit in $\del D$.}
\]
Intuitively these are the \emph{least} degenerate limiting mixed Hodge structures that $F^\sb \in D$ can specialize to.  Similarly, maximal degenerations are the \emph{most} degenerate that $F^\sb$ can specialize to.

The main results of this paper will describe the \emph{extremal} --- the minimal and maximal --- degenerations of polarized Hodge structures in a number of cases.  These will be described in terms of the types of limiting mixed Hodge structure that maps to the reduced limit period point in $\del D$.  We shall deal with two types of polarized Hodge structures.

\medskip

\noindent\emph{Type I.} \ These are polarized Hodge structures $(V,Q,F^\sb)$ of weight $n > 0$ that we think of as $H^n(X,\bC)_\tprim$ for a smooth algebraic variety $X$ of dimension $n$. 

\medskip

\noindent\emph{Type II.} \ These are polarized Hodge structures $(\fg,Q_\fg,F^\sb_\fg)$ of weight $n=0$ and where, unless otherwise mentioned, $-Q_\fg$ is the Cartan--Killing form.

\medskip

\noindent We think of polarized Hodge structures of Type I as directly related to algebraic geometry.  Limiting mixed Hodge structures $(V,W_\sb(N),F^\sb)$ arising from polarized Hodge structures of Type I may be pictured in the first quadrant of the $(p,q)$--plane in terms of the Deligne splitting
\[
  V_\bC \ = \ \bigoplus_{\mystack{0\le p,q}{0\le p+q\le 2n}} I^{p,q} 
\]
where dots indicate a possibly nonzero $I^{p,q}$.  For example, a pure Hodge structure of weight $n=5$ is depicted as
\begin{center}
\setlength{\unitlength}{10pt}
\begin{picture}(6,6)
\put(0,0){\vector(1,0){6}}
\put(0,0){\vector(0,1){6}}
\multiput(0,5)(1,-1){6}{\circle*{0.35}}
\end{picture}
\end{center} 

A polarized Hodge structure of Type I gives one of Type II with $\fg \subset \tEnd(V,Q)$.  In this case the corresponding Mumford--Tate domains are the same.  The polarized Hodge structures of Type II are especially convenient when studying the geometry of the $G_\bR$--orbits $\cO$ in $\check D$.  For example, suppose that the limiting mixed Hodge structures $(V,W_\sb(N),F^\sb)$ is $\bR$--split.  The induced adjoint limiting mixed Hodge structures $(\fg,W_\sb(N)_\fg,F^\sb_\fg)$ is also $\bR$--split.  Let $\fg_\bC = \op I^{p,q}_\fg$ be the Deligne splitting.  Let 
\[
  F^\sb_\infty \ = \ \lim_{z\to \infty} e^{zN}F^\sb \ \in \ \cO \,.
\]  
Then the tangent and normal spaces are naturally identified with
\begin{subequations} \label{SE:TN}
\begin{eqnarray}
  T_{F^\sb_\infty} \cO & = & \bigoplus_{p>0 \ \mathrm{or} \ q>0} 
  \left( I_\fg^{p,q} \op I_\fg^{q,p}\right)_\bR \,,\\
  \label{E:NcO}
  N_{F^\sb_\infty} \cO & = & 
  \bi \,\bigoplus_{p,q>0}
   \left( I_\fg^{p,q} \op I_\fg^{q,p}\right)_\bR \,,
\end{eqnarray}
\end{subequations}
\cf Section \ref{S:Lie_str}.  In particular, much of the geometry (such as dimension and codimension, CR--tangent space, and intrinsic Levi form) associated with the $G_\bR$--orbit $\cO \subset \check D$ can be ``read off'' from the Deligne splitting.  Moreover, each $I^{p,q}_\fg$ may be realized as a direct sum of root spaces (and a Cartan subalgebra if $p=q=0$), and this Lie theoretic structure plays an essential r\^ole in the analysis. 

%------------------------------------------------------------------------------
\subsection{Minimal degenerations}
%------------------------------------------------------------------------------

We begin with a result for period domains.

\begin{theorem} \label{T:intro_min}
Given a period domain $D$ parameterizing polarized Hodge structures of weight $n$, the minimal degenerations have either 
\[
\begin{array}{l}
  N^2 \ = \ 0 \tand \trank\, N \ \in \ \{ 1,2\} \,, \quad \hbox{or}\\
  N^2\ \not=0 \ \,,\ N^3 \ = \ 0 \tand \trank\,N = 2 \,.
\end{array}
\]
\end{theorem}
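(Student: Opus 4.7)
The plan is to use the normal-space formula \eqref{E:NcO} to convert the codim-one condition on $\cO$ into a combinatorial restriction on the support of the Deligne bigrading of $V$, and then to apply Jacobson--Morozov to read off $\trank N$ and its order of nilpotence.

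\smallskip

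First I would reduce to the $\bR$-split case (the $G_\bR$-orbit of $F^\sb_\infty$ is unchanged under $\bR$-splitting) and apply \eqref{E:NcO} to express $\dim_\bR N_{F^\sb_\infty}\cO \,=\, \sum_{p,q>0}\dim_\bC I^{p,q}_\fg$, after collecting conjugate pairs. Real codimension one then forces exactly one summand $I^{p,q}_\fg$ with $p,q>0$ to be one-dimensional; complex conjugation swaps $I^{p,q}_\fg$ with $I^{q,p}_\fg$, pinning the surviving index to the diagonal $p=q$; and the Jacobson--Morozov completion $N^+$ of $N$ lies in $I^{1,1}_\fg$, which forces $p=q=1$. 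Hence ``codim one'' is equivalent to $\dim_\bC I^{1,1}_\fg=1$ together with $I^{p,q}_\fg=0$ for every other $(p,q)$ with $p,q>0$.

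\smallskip

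Next I would translate these vanishings into a constraint on the support $\sS:=\{(a,b):I^{a,b}\ne 0\}$. Since for a period domain $\fg=\fsp(V,Q)$ or $\fo(V,Q)$ is a full classical Lie algebra, each $I^{p,q}_\fg$ is the $Q$-skew subspace of $\bigoplus_{c,d}\tHom(I^{c,d},I^{c+p,d+q})$. A pair $(c,d),(c+p,d+q)\in\sS$ produces a nonzero element of $I^{p,q}_\fg$ (the $Q$-dual partner $(n-c-p,n-d-q)\in\sS$ is automatic from the polarization symmetry $(a,b)\leftrightarrow(n-a,n-b)$), unless the pair is \emph{self-paired}, i.e.\ $2c+p=2d+q=n$; in the self-paired case the $Q$-skew condition forces $X$ to be represented by an antisymmetric bilinear form on $I^{c,d}$ (a short parity calculation using $n\equiv p+q\pmod 2$), which vanishes only when $\dim I^{c,d}=1$. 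Consequently: for any $(c,d),(c',d')\in\sS$ with $c'>c$ and $d'>d$, one must have $(c'-c,d'-d)=(1,1)$, modulo the noted self-pairing exception. Combined with conjugation $(a,b)\leftrightarrow(b,a)$, polarization symmetry, and the Hodge quadrant $0\le a,b\le n$, a finite enumeration shows that $\sS$ (with its multiplicities) belongs to one of three families, each $I^{a,b}$ being one-dimensional:
\begin{i_list}
\item a \emph{diagonal pair} $\{(p{+}1,p{+}1),(p,p)\}$ at odd weight $n=2p+1$;
\item an \emph{off-diagonal cluster} $\{(a,b),(b,a),(a{-}1,b{-}1),(b{-}1,a{-}1)\}$ with $a>b$ and $a+b=n+1$;
\item a \emph{diagonal triple} $\{(p{+}1,p{+}1),(p,p),(p{-}1,p{-}1)\}$ at even weight $n=2p$.
\end{i_list}

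\smallskip

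Finally, Jacobson--Morozov identifies $V$ in these three cases as (i) a single dimension-$2$ irreducible $\fsl_2$-representation, giving $N^2=0$ and $\trank N=1$; (ii) two dimension-$2$ irreducibles, giving $N^2=0$ and $\trank N=2$; and (iii) one dimension-$3$ irreducible, giving $N^2\ne 0$, $N^3=0$, and $\trank N=2$. (Additional trivial $\fsl_2$-summands---which live on the antidiagonal $c+d=n$---can be appended in each case without violating $\dim I^{1,1}_\fg=1$, and do not affect the nilpotency or rank of $N$.) These are exactly the three possibilities stated in the theorem. The main obstacle is the delicate parity/dimension check inside $\fsp$ and $\fo$ that rules out longer diagonal chains (e.g.\ a chain of four or more diagonal points) or hybrid diagonal/off-diagonal supports, by showing that the self-pairing cancellation applies only to shifts $(k,k)$ with $k\equiv n\pmod 2$ and with a dim-one piece at the self-paired point, and therefore cannot propagate beyond the three configurations above.
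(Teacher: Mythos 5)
Your overall strategy is viable and genuinely different from the paper's. The paper's proof (Theorem \ref{T:basicBC}) is much shorter: it quotes the fact from Section \ref{S:codim=1} that a codimension--one orbit is polarized by a \emph{root vector} $N \in \fg^{-\a}_\bR$ of $\fg = \tEnd(V,Q)$, writes down the explicit normal forms \eqref{E:rtN1}--\eqref{E:rtN3} of root vectors of $\fsp_{2c}$ and $\fso_d$ in the standard representation, and reads off $N^2$, $N^3$ and $\trank N$ by inspection. Your Step 1 essentially re-derives the root--vector fact (your use of $N^+ \in I^{1,1}_\fg$ to pin $p=q=1$ is a nice alternative to the paper's appeal to the bracket--generating hypothesis), but you then replace the normal--form computation by a combinatorial analysis of the Deligne bigrading of $V$. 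That buys a description of the $I^{p,q}$ themselves (closer to the ``general rule'' stated after the theorem), at the cost of a delicate enumeration that you acknowledge you have not carried out; the paper's route gets the rank and nilpotency statements for free from two lines of linear algebra.

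Two concrete points in your sketch are wrong as stated. First, the parity of the self--paired form: for $X : I^{c,d} \to I^{n-c,n-d}$ the form $B(u,v) = Q(Xu,v)$ satisfies $B(v,u) = (-1)^{n+1}B(u,v)$, so it is \emph{antisymmetric only for even} $n$ and \emph{symmetric for odd} $n$. Taken literally, your blanket ``antisymmetric'' claim would give $\dim I^{1,1}_\fg = \dim\Lambda^2(I^{p,p})^* = 0$ in your case (i) (odd $n$, diagonal pair, self--paired shift $(1,1)$) and would wipe that case out; it is precisely the symmetric case, with $\dim\tSym^2(I^{p,p})^* = 1$ when $\dim I^{p,p}=1$, that makes (i) contribute exactly one dimension to the normal space. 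Second, the parenthetical claim that trivial $\fsl_2$--summands on the antidiagonal ``can be appended in each case without violating $\dim I^{1,1}_\fg = 1$'' fails in case (iii): a trivial summand at $(m,m)$ enlarges $\tHom(I^{m-1,m-1}, I^{m,m})$, which (coupled by $Q$ to $\tHom(I^{m,m},I^{m+1,m+1})$) gives $\dim I^{1,1}_\fg = \dim I^{m,m} = h^{m,m}$; e.g.\ for $\bh = (1,3,1)$ the type II configuration is Hodge--Tate and lands in the closed orbit of codimension $3$, not a codimension--one orbit. Neither error breaks the theorem --- which is only a necessary condition on $N$, so an over--permissive enumeration still suffices --- but the first must be fixed for your list to contain case (i) at all, and the ``finite enumeration'' ruling out longer diagonal chains, length--$\ge 3$ off--diagonal strings, and multiple clusters remains the bulk of the work and is only asserted.
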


\noindent  We shall describe the nonzero $I^{p,q}$.  Figure \ref{f:min-pd} illustrates the possibilities for weights one through four; from these the reader will easily guess (correctly) what the general case will be.

\afterpage{\clearpage}

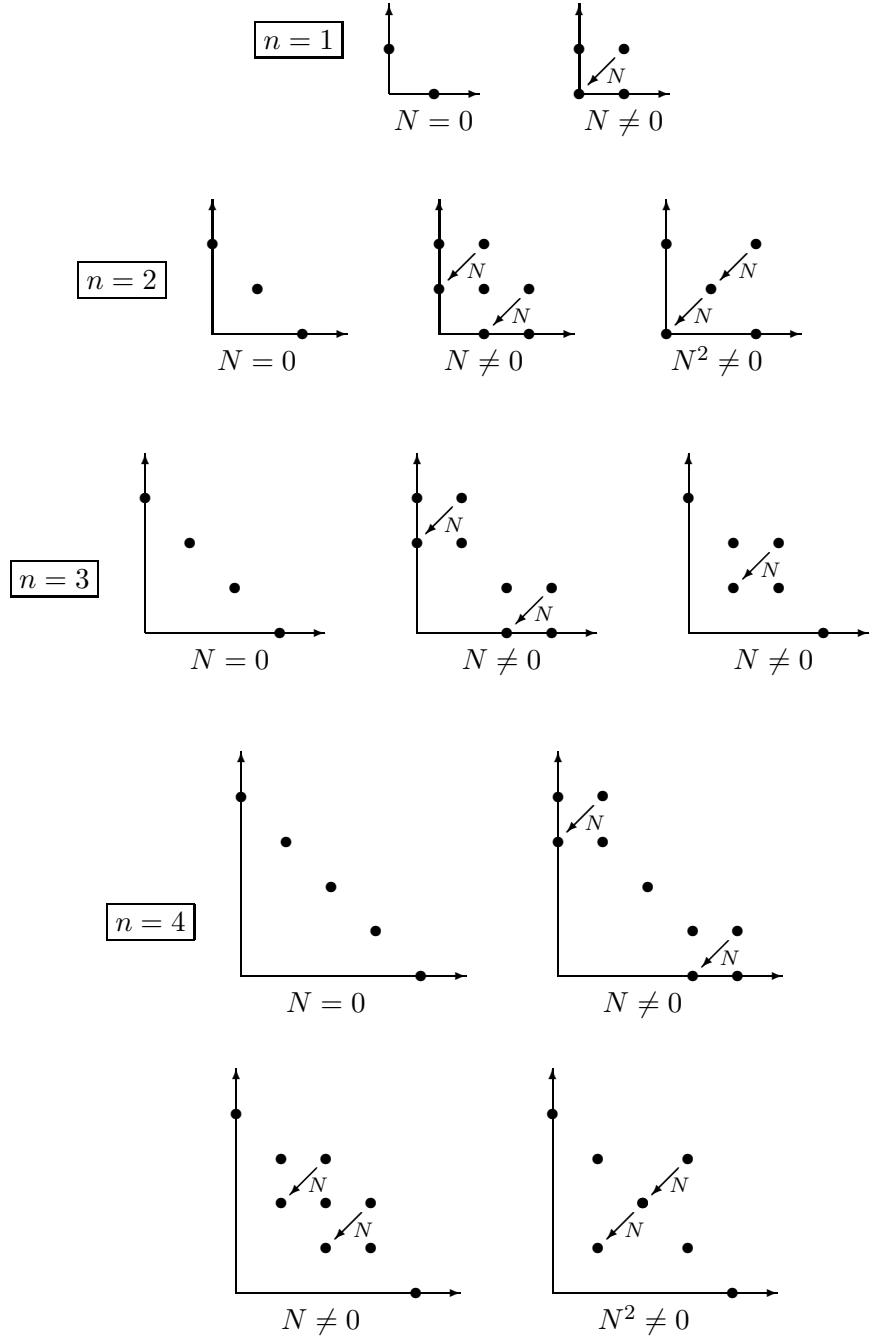
\begin{figure}[!th]
\caption{Minimal degenerations for weight $n$ period domains}
\setlength{\unitlength}{17pt}
\begin{picture}(5,4)(-3,-1)  % n=1
\put(0,0){\vector(0,1){2}}
\put(0,0){\vector(1,0){2}}
\put(0,1){\circle*{0.22}}
\put(1,0){\circle*{0.22}}
\put(-3,1){\fbox{\small{$n=1$}}}
\put(0.1,-0.8){\small{$N=0$}}
\end{picture}
\hspace{30pt}
\begin{picture}(2,3)(0,-1)
\put(0,0){\vector(0,1){2}}
\put(0,0){\vector(1,0){2}}
\put(0,1){\circle*{0.22}}
\put(1,0){\circle*{0.22}}
\put(0,0){\circle*{0.22}}
\put(1,1){\circle*{0.21}}
\put(0.8,0.8){\vector(-1,-1){0.6}}
\put(0.1,-0.8){\small{$N\not=0$}}
\put(0.6,0.25){\tiny{$N$}}
\end{picture}
%\label{f:min-pd-1}
%\end{figure}
%
\vspace{5pt}\\
%
%\begin{figure}[!th]
%\caption{Minimal degenerations for weight $n=2$ period domains}
%\setlength{\unitlength}{17pt}
\begin{picture}(7,5)(-3,-1)  % n=2
\put(0,0){\vector(0,1){3}}
\put(0,0){\vector(1,0){3}}
\put(0,2){\circle*{0.22}}
\put(1,1){\circle*{0.22}}
\put(2,0){\circle*{0.22}}
\put(-3,1){\fbox{\small{$n=2$}}}
\put(0.1,-0.8){\small{$N=0$}}
\end{picture}
\hspace{10pt}
\begin{picture}(4,5)(0,-1)
\put(0,0){\vector(0,1){3}}
\put(0,0){\vector(1,0){3}}
\put(0,2){\circle*{0.22}} \put(0,1){\circle*{0.22}}
\put(1,1){\circle*{0.22}} \put(1,0){\circle*{0.22}} \put(1,2){\circle*{0.22}}
\put(2,0){\circle*{0.22}} \put(2,1){\circle*{0.22}}
\put(0.8,1.8){\vector(-1,-1){0.6}}
\put(1.8,0.8){\vector(-1,-1){0.6}}
\put(0.1,-0.8){\small{$N\not=0$}}
\multiput(0.6,1.25)(1,-1){2}{\tiny{$N$}}
\end{picture}
\hspace{10pt}
\begin{picture}(4,5)(0,-1)
\put(0,0){\vector(0,1){3}}
\put(0,0){\vector(1,0){3}}
\put(0,2){\circle*{0.22}} \put(0,0){\circle*{0.22}}
\put(1,1){\circle*{0.22}} 
\put(2,0){\circle*{0.22}} \put(2,2){\circle*{0.22}}
\put(0.8,0.8){\vector(-1,-1){0.6}}
\put(1.8,1.8){\vector(-1,-1){0.6}}
\put(0.1,-0.8){\small{$N^2\not=0$}}
\multiput(0.6,0.25)(1,1){2}{\tiny{$N$}}
\end{picture}
%\label{f:min-pd-2}
%\end{figure}
%
\vspace{10pt}\\
%
%\begin{figure}[!th]
%\caption{Minimal degenerations for weight $n=3$ period domains}
%\setlength{\unitlength}{17pt}
\begin{picture}(8,6)(-3,-1)  % n=3
\put(0,0){\vector(0,1){4}}
\put(0,0){\vector(1,0){4}}
\put(0,3){\circle*{0.22}}
\put(1,2){\circle*{0.22}}
\put(2,1){\circle*{0.22}}
\put(3,0){\circle*{0.22}}
\put(-3,1){\fbox{\small{$n=3$}}}
\put(1,-0.8){\small{$N=0$}}
\end{picture}
\hspace{10pt}
\begin{picture}(5,6)(0,-1)
\put(0,0){\vector(0,1){4}}
\put(0,0){\vector(1,0){4}}
\put(0,3){\circle*{0.22}} \put(0,2){\circle*{0.22}}
\put(1,2){\circle*{0.22}} \put(1,3){\circle*{0.22}}
\put(2,1){\circle*{0.22}} \put(2,0){\circle*{0.22}}
\put(3,0){\circle*{0.22}} \put(3,1){\circle*{0.22}}
\put(0.8,2.8){\vector(-1,-1){0.6}}
\put(2.8,0.8){\vector(-1,-1){0.6}}
\put(1,-0.8){\small{$N\not=0$}}
\multiput(0.6,2.25)(2,-2){2}{\tiny{$N$}}
\end{picture}
\hspace{10pt}
\begin{picture}(5,6)(0,-1)
\put(0,0){\vector(0,1){4}}
\put(0,0){\vector(1,0){4}}
\put(0,3){\circle*{0.22}} 
\put(1,2){\circle*{0.22}} \put(1,1){\circle*{0.22}}
\put(2,1){\circle*{0.22}} \put(2,2){\circle*{0.22}}
\put(3,0){\circle*{0.22}} 
\put(1.8,1.8){\vector(-1,-1){0.6}}
\put(1,-0.8){\small{$N\not=0$}}
\put(1.6,1.25){\tiny{$N$}}
\end{picture}
%\label{f:min-pd-3}
%\end{figure}
%
\vspace{10pt}\\
%
%\begin{figure}[!th]
%\caption{Minimal degenerations for weight $n=4$ period domains}
%\setlength{\unitlength}{17pt}
\begin{picture}(9,7)(-3,-1)  % n=4
\put(0,0){\vector(0,1){5}}
\put(0,0){\vector(1,0){5}}
\multiput(0,4)(1,-1){5}{\circle*{0.22}}
\put(-3,1){\fbox{\small{$n=4$}}}
\put(1,-0.8){\small{$N=0$}}
\end{picture}
\hspace{10pt}
\begin{picture}(6,7)(0,-1)
\put(0,0){\vector(0,1){5}}
\put(0,0){\vector(1,0){5}}
\multiput(0,4)(1,-1){5}{\circle*{0.22}}
\multiput(0,3)(1,1){2}{\circle*{0.22}}
\multiput(3,0)(1,1){2}{\circle*{0.22}}
\put(0.8,3.8){\vector(-1,-1){0.6}}
\put(3.8,0.8){\vector(-1,-1){0.6}}
\put(1,-0.8){\small{$N\not=0$}}
\multiput(0.6,3.25)(3,-3){2}{\tiny{$N$}}
\end{picture}
\\
\begin{picture}(9,7)(-3,-1)
\put(0,0){\vector(0,1){5}}
\put(0,0){\vector(1,0){5}}
\multiput(0,4)(1,-1){5}{\circle*{0.22}}
\multiput(1,2)(1,1){2}{\circle*{0.22}}
\multiput(2,1)(1,1){2}{\circle*{0.22}}
\put(1.8,2.8){\vector(-1,-1){0.6}}
\put(2.8,1.8){\vector(-1,-1){0.6}}
\put(1,-0.8){\small{$N\not=0$}}
\multiput(1.6,2.25)(1,-1){2}{\tiny{$N$}}
\end{picture}
\hspace{10pt}
\begin{picture}(6,7)(0,-1)
\put(0,0){\vector(0,1){5}}
\put(0,0){\vector(1,0){5}}
\multiput(0,4)(1,-1){5}{\circle*{0.22}}
\multiput(1,1)(1,1){3}{\circle*{0.22}}
\put(1.8,1.8){\vector(-1,-1){0.6}}
\put(2.8,2.8){\vector(-1,-1){0.6}}
\put(1,-0.8){\small{$N^2\not=0$}}
\multiput(1.6,1.25)(1,1){2}{\tiny{$N$}}
\end{picture}
%\label{f:min-pd-4}
\label{f:min-pd}
\end{figure}

\newpage

The general rule is this: Let $h^{p,q}$ be the Hodge numbers of the polarized Hodge structure $(V,Q,F^\sb)$ and 
\[
  i^{p,q} \ = \ \tdim\,I^{p,q} \,.
\]  
If $N\not=0$ and $N^2=0$, then for one pair $p_o<q_o$ with $p_o+q_o = n$ we have:
\begin{bcirclist}
\item
$i^{p_o+1,q_o}\,,\ i^{p_o,q_o-1} = 1$;
\item
$i^{p_o,q_o} = h^{p_o,q_o} - 1$ and $i^{p_o+1,q_o-1} = h^{p_o+1,q_o-1} - 1$;
\item
for all other $p<q$, $i^{p,q} = h^{p,q}$.
\end{bcirclist}
Put another way, for some $p_o<q_o$ one class in $V^{p_o,q_o}$ and one class in $V^{p_o+1,q_o-1}$ disappear in $\tGr^{W(N)}_n$ and reappear as classes
\begin{eqnarray*}
  \a & \in & I^{p_o+1,q_o} \,,\\
  N\a & \in & I^{p_o,q_o-1} \,.
\end{eqnarray*}
If $N^2\not=0$ and $N^3=0$, then $n=2m$ is even and we have:
\begin{bcirclist}
\item
$i^{m-1,m-1}\,,\ i^{m+1,m+1} =1$;
\item
$i^{m-1,m+1} = h^{m-1,m+1} - 1$ and $i^{m+1,m-1} = h^{m+1,m-1} - 1$;
\item
for all other $p<q$, $i^{p,q} = h^{p,q}$.
\end{bcirclist}
In this case, one class in $V^{m-1,m+1}$ and one class in $V^{m+1,m-1}$  disappear in $\tGr^{W(N)}_n$ and reappear as classes
\begin{eqnarray*}
  \a & \in & I^{m+1,m+1} \,,\\
  N^2\a & \in & I^{m-1,m-1} \,.
\end{eqnarray*}

The pictures above are particularly revealing when 
\begin{equation} \label{E:h1}
  h^{p,q} \ = \ \left\{ \begin{array}{ll}
    1 & \hbox{for all } \ p\not=q \\
    2 & \hbox{if } \ p=q \,.
  \end{array} \right.
\end{equation}
In this case they are as pictured in Figure \ref{f:min-pd-h1}; in these figures a uncircled node indicates $i^{p,q}=1$, a circled node indicates $i^{p,q}=2$.
\afterpage{\clearpage}
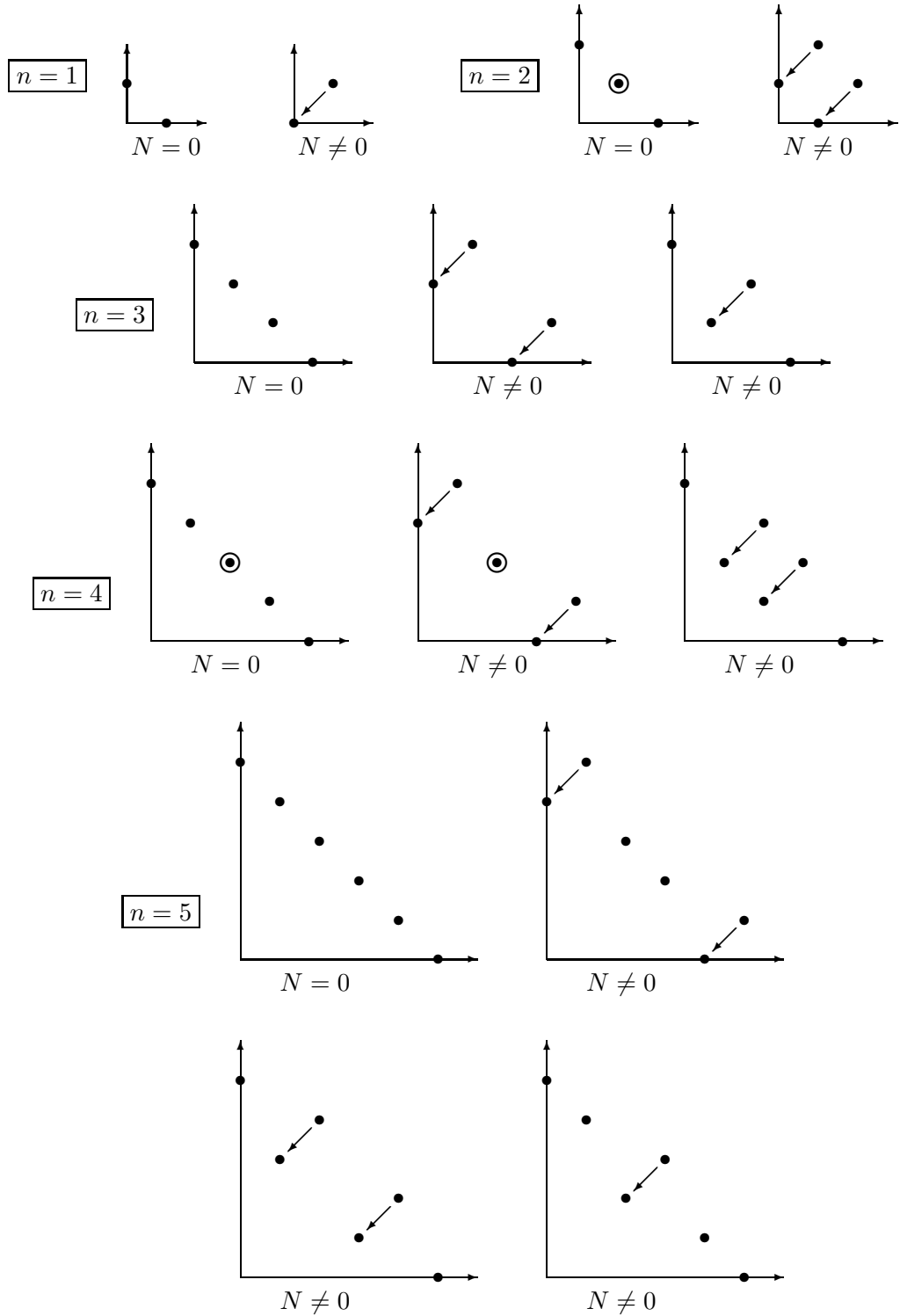
\begin{figure}[!th]
\caption{Minimal degenerations for period domains with \eqref{E:h1}}
\setlength{\unitlength}{17pt}
\begin{picture}(5,4)(-3,-1)
\put(0,0){\vector(0,1){2}}
\put(0,0){\vector(1,0){2}}
\put(0,1){\circle*{0.22}}
\put(1,0){\circle*{0.22}}
\put(-3,1){\fbox{\small{$n=1$}}}
\put(0.1,-0.8){\small{$N=0$}}
\end{picture}
\hspace{30pt}
\begin{picture}(2,3)(0,-1)
\put(0,0){\vector(0,1){2}}
\put(0,0){\vector(1,0){2}}
%\put(0,1){\circle*{0.22}}
%\put(1,0){\circle*{0.22}}
\put(0,0){\circle*{0.22}}
\put(1,1){\circle*{0.22}}
\put(0.8,0.8){\vector(-1,-1){0.6}}
\put(0.1,-0.8){\small{$N\not=0$}}
\end{picture}
%\label{f:min-pd-h1-1}
%\end{figure}
%
\hspace{30pt}  %\vspace{10pt}\\
%
%\begin{figure}[!th]
%\caption{Minimal degenerations for period domains with $n=2$ and \eqref{E:h1}}
%\setlength{\unitlength}{17pt}
\begin{picture}(7,5)(-3,-1)
\put(0,0){\vector(0,1){3}}
\put(0,0){\vector(1,0){3}}
\put(0,2){\circle*{0.22}}
\put(1,1){\circle*{0.22}} \thicklines \put(1,1){\circle{0.45}} \thinlines
\put(2,0){\circle*{0.22}}
\put(-3,1){\fbox{\small{$n=2$}}}
\put(0.1,-0.8){\small{$N=0$}}
\end{picture}
\hspace{10pt}
\begin{picture}(4,5)(0,-1)
\put(0,0){\vector(0,1){3}}
\put(0,0){\vector(1,0){3}}
%\put(0,2){\circle*{0.22}} 
\put(0,1){\circle*{0.22}}
%\put(1,1){\circle*{0.22}} 
\put(1,0){\circle*{0.22}} \put(1,2){\circle*{0.22}}
%\put(2,0){\circle*{0.22}} 
\put(2,1){\circle*{0.22}}
\put(0.8,1.8){\vector(-1,-1){0.6}}
\put(1.8,0.8){\vector(-1,-1){0.6}}
\put(0.1,-0.8){\small{$N\not=0$}}
\end{picture}
\vspace{0pt}\\
\begin{picture}(8,6)(-3,-1)
\put(0,0){\vector(0,1){4}}
\put(0,0){\vector(1,0){4}}
\put(0,3){\circle*{0.22}}
\put(1,2){\circle*{0.22}}
\put(2,1){\circle*{0.22}}
\put(3,0){\circle*{0.22}}
\put(-3,1){\fbox{\small{$n=3$}}}
\put(1,-0.8){\small{$N=0$}}
\end{picture}
\hspace{10pt}
\begin{picture}(5,6)(0,-1)
\put(0,0){\vector(0,1){4}}
\put(0,0){\vector(1,0){4}}
\put(0,2){\circle*{0.22}}
\put(1,3){\circle*{0.22}}
\put(2,0){\circle*{0.22}}
\put(3,1){\circle*{0.22}}
\put(0.8,2.8){\vector(-1,-1){0.6}}
\put(2.8,0.8){\vector(-1,-1){0.6}}
\put(1,-0.8){\small{$N\not=0$}}
\end{picture}
\hspace{10pt}
\begin{picture}(5,6)(0,-1)
\put(0,0){\vector(0,1){4}}
\put(0,0){\vector(1,0){4}}
\put(0,3){\circle*{0.22}} 
%\put(1,2){\circle*{0.22}} \put(2,1){\circle*{0.22}} 
\put(1,1){\circle*{0.22}} \put(2,2){\circle*{0.22}}
\put(3,0){\circle*{0.22}} 
\put(1.8,1.8){\vector(-1,-1){0.6}}
\put(1,-0.8){\small{$N\not=0$}}
\end{picture}
%\label{f:min-pd-h1-3}
%\end{figure}
%
\vspace{0pt}\\
%
%\begin{figure}[!th]
%\caption{Minimal degenerations for period domains with $n=4$ and \eqref{E:h1}}
%\setlength{\unitlength}{17pt}
\begin{picture}(9,7)(-3,-1)
\put(0,0){\vector(0,1){5}}
\put(0,0){\vector(1,0){5}}
\multiput(0,4)(1,-1){5}{\circle*{0.22}}
\thicklines \put(2,2){\circle{0.45}} \thinlines
\put(-3,1){\fbox{\small{$n=4$}}}
\put(1,-0.8){\small{$N=0$}}
\end{picture}
\hspace{5pt}
\begin{picture}(6,7)(0,-1)
\put(0,0){\vector(0,1){5}}
\put(0,0){\vector(1,0){5}}
%\multiput(0,4)(1,-1){5}{\circle*{0.22}}
\multiput(0,3)(1,1){2}{\circle*{0.22}}
\multiput(3,0)(1,1){2}{\circle*{0.22}}
\put(2,2){\circle*{0.22}}
\thicklines \put(2,2){\circle{0.45}} \thinlines
\put(0.8,3.8){\vector(-1,-1){0.6}}
\put(3.8,0.8){\vector(-1,-1){0.6}}
\put(1,-0.8){\small{$N\not=0$}}
\end{picture}
\hspace{5pt}
\begin{picture}(6,7)(0,-1)
\put(0,0){\vector(0,1){5}}
\put(0,0){\vector(1,0){5}}
\put(0,4){\circle*{0.22}}
\put(4,0){\circle*{0.22}}
\multiput(1,2)(1,1){2}{\circle*{0.22}}
\multiput(2,1)(1,1){2}{\circle*{0.22}}
\put(1.8,2.8){\vector(-1,-1){0.6}}
\put(2.8,1.8){\vector(-1,-1){0.6}}
\put(1,-0.8){\small{$N\not=0$}}
\end{picture}
%
% n=5
%
\begin{picture}(10,8)(-3,-1)
\put(0,0){\vector(0,1){6}}
\put(0,0){\vector(1,0){6}}
\multiput(0,5)(1,-1){6}{\circle*{0.22}}
\put(-3,1){\fbox{\small{$n=5$}}}
\put(1,-0.8){\small{$N=0$}}
\end{picture}
\hspace{5pt}
\begin{picture}(7,8)(0,-1)
\put(0,0){\vector(0,1){6}}
\put(0,0){\vector(1,0){6}}
%\multiput(0,4)(1,-1){5}{\circle*{0.22}}
\multiput(0,4)(1,1){2}{\circle*{0.22}}
\multiput(4,0)(1,1){2}{\circle*{0.22}}
\multiput(2,3)(1,-1){2}{\circle*{0.22}}
\put(0.8,4.8){\vector(-1,-1){0.6}}
\put(4.8,0.8){\vector(-1,-1){0.6}}
\put(1,-0.8){\small{$N\not=0$}}
\end{picture}
\vspace{0pt}\\
\begin{picture}(10,8)(-3,-1)
\put(0,0){\vector(0,1){6}}
\put(0,0){\vector(1,0){6}}
\put(0,5){\circle*{0.22}}
\put(5,0){\circle*{0.22}}
\multiput(1,3)(1,1){2}{\circle*{0.22}}
\multiput(3,1)(1,1){2}{\circle*{0.22}}
\put(1.8,3.8){\vector(-1,-1){0.6}}
\put(3.8,1.8){\vector(-1,-1){0.6}}
\put(1,-0.8){\small{$N\not=0$}}
\end{picture}
\hspace{5pt}
\begin{picture}(7,8)(0,-1)
\put(0,0){\vector(0,1){6}}
\put(0,0){\vector(1,0){6}}
\put(0,5){\circle*{0.22}}
\put(5,0){\circle*{0.22}}
\put(1,4){\circle*{0.22}}
\put(4,1){\circle*{0.22}}
\multiput(2,2)(1,1){2}{\circle*{0.22}}
\put(2.8,2.8){\vector(-1,-1){0.6}}
\put(1,-0.8){\small{$N\not=0$}}
\end{picture}
\label{f:min-pd-h1}
\end{figure}

From the algebro--geometric perspective, Theorem \ref{T:intro_min} (we will prove the more precise Theorem \ref{T:basicBC}) may at first glance seem surprising.  For example, for a smooth threefold $X$ a ``generic'' specialization might be thought to be $X \to X_o$ where $X_o$ has a node.  In the case that \eqref{E:h1} holds, this is the right--most picture of Figure \ref{f:min-pd-h1} for $n=3$.  The middle picture would arise from $X_o$ having a smooth double surface $D$, where there is an $\w \in H^0(\Omega^3_X)$ that specializes to $\w_o \in H^0(\Omega^3_{X_o}(\log D))$ whose residue goes to the class in $I^{2,0}$.

In the case that $n=5$, the second and fourth picture of Figure \ref{f:min-pd-h1} may be interpreted as in the $n=3$ case.  For the third we may think of a five-fold $X$ specializing to $X_o$ with the local equation $\{ x_1 x_2 + x_3 x_4 = 0\}$ in $\bC^6$; that is, $X_o$ has a double point along a surface.  Then the $h^{4,1}$ drops by one.  In summary, the three degenerations (in the $n=5$ case) correspond to the local equations
\[
\begin{array}{ll}
  x_1 x_2 = 0 \,, &  \hbox{(double four-fold)} \\
  x_1 x_2 + x_3 x_4 = 0 \,, &  \hbox{(double surface)} \\
  x_1 x_2 + x_3 x_4 + x_5 x_6 = 0  \,, & \hbox{(double point).}
\end{array} 
\]

%All of the above degenerations may be realized algebro--geometrically (Section \ref{S:ag-real}).
\bigskip

In the more general setting of Mumford--Tate domains, the $I^{p,q}$ may be more complicated than those of Figure \ref{f:min-pd}: there may be more $N$--strings, and they may be have length greater than two.  What we can say, in terms of generalizing Theorem \ref{T:intro_min}, is Proposition \ref{P:min_mtd}.   Nonetheless, from the Lie theoretic perspective, the codimension one orbits all possess a uniform structure in the following sense: for an appropriate choice of Cartan subalgebra $\fh_\bR$ (essentially one may think of this as reflecting a ``good choice'' of basis of $V$), the nilpotent $N$ will be a root vector and the normal space may be identified with a real root space \cite{MR3115136, KP2013}.  That is, 
\[
  N \ \in \ \fg^\a_\bR \,,
\]
where $\a \in \fh^*_\bC$ is a root and the root space $\fg^\a \subset \fg_\bC$ is defined over $\bR$, and \eqref{E:NcO} becomes
\[
  N_{F^\sb_\infty}\cO \ = \ \bi\, I^{1,1}_\fg(\bR) \ = \ 
  \bi \,\fg^\a_\bR \,.
\]

%------------------------------------------------------------------------------
\subsection{Maximal degenerations}
%------------------------------------------------------------------------------

We recall that mixed Hodge structure $(V,W_\sb,F^\sb)$ is \emph{of Hodge--Tate type} if the $I^{p,q}=0$ for all $p\not= q$.  (The Hodge structures in row 4 of Figure \ref{f:G2/B} are of Hodge--Tate type.)  For limiting mixed Hodge structures there is the general

\begin{proposition}  \label{P:I5}
The limiting mixed Hodge structure $(V,W_\sb(N),F^\sb)$ is of Hodge--Tate type if and only if the associated adjoint limiting mixed Hodge structure $(\fg,W_\sb(N)_\fg,F^\sb_\fg)$ is of Hodge--Tate type.
\end{proposition}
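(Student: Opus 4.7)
The plan is to introduce the \emph{Hodge charge} operator $H_V \in \tEnd(V_\bC)$ defined by $H_V|_{I^{p,q}} = (p-q)\,\tId$, and to reduce the proposition to computing its centralizer. Recall that the Deligne splitting of $V$ induces one on $\tEnd(V_\bC)$ via
\[
  I^{a,b}_{\tEnd V} \ = \ \left\{X \in \tEnd(V_\bC) \ :\ X(I^{r,s}_V) \subset I^{r+a,s+b}_V \ \hbox{for all } r,s\right\},
\]
and since $\fg = \tEnd(V,Q)$ is a sub--mixed Hodge structure of $\tEnd(V)$, one has $I^{a,b}_\fg = \fg_\bC \cap I^{a,b}_{\tEnd V}$. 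A direct calculation on each Deligne summand yields the commutation relation $[H_V, X] = (a-b)\,X$ for $X \in I^{a,b}_{\tEnd V}$, so $\fg$ is of Hodge--Tate type (i.e.\ $I^{a,b}_\fg = 0$ for all $a\ne b$) if and only if $H_V$ lies in the centralizer $Z_{\tEnd V}(\fg)$. The forward implication is then immediate: if $V$ is of Hodge--Tate type, then $H_V = 0$, so $H_V$ trivially commutes with all of $\fg_\bC$.

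For the converse, assume $\fg$ is of Hodge--Tate type, so that $H_V$ centralizes $\fg$ in $\tEnd(V)$. Depending on the parity of the weight $n$, the algebra $\fg$ equals either $\fso(V,Q)$ or $\fsp(V,Q)$, and $V$ is the standard representation, hence irreducible as a $\fg$--module. Schur's lemma then forces $H_V = c\,\tId_V$ for some $c \in \bC$. To see $c = 0$, I would compute the trace
\[
  \mathrm{tr}\,H_V \ = \ \sum_{p,q}(p-q)\,\tdim\,I^{p,q}_V \ = \ 0,
\]
the vanishing using the Hodge symmetry $\tdim\,I^{p,q} = \tdim\,I^{q,p}$, which in turn follows from the fact that each $\tGr^{W(N)}_{p+q}\,V$ carries a polarized pure Hodge structure of weight $p+q$ (by Schmid's theorem). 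Consequently $H_V = 0$ and $V$ is of Hodge--Tate type.

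The main subtlety I anticipate concerns confirming that $V$ is irreducible under $\fg$. This is standard when $\tdim_\bC V \ge 3$ (for $\fso$) or $\tdim_\bC V \ge 2$ (for $\fsp$); the degenerate case $\tdim_\bC V = 2$ with symmetric $Q$, where $\fso(V,Q)$ is one--dimensional abelian, must be handled by direct inspection, but in that case being Hodge--Tate for a weight--$2$ LMHS can be verified by hand from the very short list of possibilities. Apart from this edge case, the argument is a purely formal manipulation of the Deligne splitting together with Schur's lemma.
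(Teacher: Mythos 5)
Your argument via the operator $H_V$ with $H_V|_{I^{p,q}} = (p-q)\,\tId$ is a genuinely different route from the paper's, and its formal core is correct: the commutation relation $[H_V,X]=(a-b)X$ for $X\in I^{a,b}_{\tEnd V}$ does show that $(\fg,W_\sb(N)_\fg,F^\sb_\fg)$ is Hodge--Tate exactly when $H_V$ centralizes $\fg_\bC$, and the forward implication is immediate. For comparison, the paper works with the pair of grading elements $L,\overline L$ lying in a Cartan subalgebra $\fh\subset\fg_\bC$, whose joint eigenvalues on the weights of $V$ (resp.\ on the roots) index the nonzero $I^{p,q}$ (resp.\ $I^{p,q}_\fg$); both Hodge--Tate conditions are then equivalent to $L=\overline L$, the bridge being that the weights of $V$ span the weight lattice over $\bQ$ (Lemma \ref{L:HT}). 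Your $H_V$ is precisely the action of $\overline L-L$ on $V_\bC$, so the two arguments are closer than they look.

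The gap is in your converse, where you identify $\fg$ with all of $\fso(V,Q)$ or $\fsp(V,Q)$ and take $V$ to be its standard, hence irreducible, representation. That covers only the period--domain case. The proposition is stated and used for general Mumford--Tate domains (see e.g.\ Remark \ref{R:HT-nPD}(c), which applies it to two different Hodge representations of the same $G$), where $\fg$ is a proper subalgebra of $\tEnd(V,Q)$ and $V_\bC$ need not be $\fg_\bC$--irreducible: even an irreducible real Hodge representation can have $V_\bC=U\op U^*$, and non--standard representations can decompose further. Schur's lemma then only yields that $H_V$ acts by a scalar $c_i$ on each irreducible summand $V_i$, and your trace identity only controls $\sum_i c_i\,\tdim V_i$, not the individual $c_i$; so the argument does not close. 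The repair is to observe that $H_V$ is not just any element of $\tEnd(V_\bC)$ but is $\rho_*(\overline L-L)$ for grading elements $L,\overline L$ in a Cartan subalgebra of $\fg_\bC$; once $H_V\in\fg_\bC$, centrality in the semisimple algebra $\fg_\bC$ forces $H_V=0$ with no irreducibility hypothesis --- but establishing that is essentially the paper's proof.
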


\noindent The proposition is proved in Section \ref{S:HT}.

For the description here of the maximal degenerations we shall make the assumption 
\begin{center}
\emph{the closed $G_\bR$--orbit $\cO_\mathrm{cl}$ is polarizable relative to $D$.}
\end{center}
Thus the image \eqref{E:I5} of \eqref{E:I4} under the reduced limit period map is a point $F^\sb_\infty \in \cO_\mathrm{cl}$.

\begin{theorem} \label{T:I6}
The following are equivalent:
\begin{a_list_emph}
\item 
The orbit $\cO_\mathrm{cl}$ is totally real; i.e., the Cauchy--Riemann tangent space $T^\mathrm{CR}\cO_\mathrm{cl} = 0$.
\item 
The real dimension of the closed orbit is the complex dimension of the compact dual: $\tdim_\bR \cO_\mathrm{cl} = \tdim_\bC \check D$.
\item 
The stabilizer $P = \tStab_{G_\bC}(F^\sb_\infty)$ is $\bR$--split and $\cO_\mathrm{cl} = G_\bR/P_\bR$.
\end{a_list_emph}
Any of these imply that the limiting mixed Hodge structures $(V,W_\sb(N),F^\sb)$ and $(\fg,W_\sb(N)_\fg,F^\sb_\fg)$ are of Hodge--Tate type.
\end{theorem}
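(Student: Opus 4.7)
The plan is to derive the three equivalences by a single dimension count on $T\check D \cong \fg_\bC/\fp$, and then to extract the Hodge--Tate conclusion from (c) by combining the Deligne-splitting description of the constraint $\fp = \bar\fp$ with the $\fsl_2$-triple of the limiting mixed Hodge structure.

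For the equivalences, identify $T\check D \cong \fg_\bC/\fp$ at $F^\sb_\infty$, so that $T\cO_\mathrm{cl} = (\fg_\bR+\fp)/\fp$ and $JT\cO_\mathrm{cl} = (\bi\fg_\bR+\fp)/\fp$. Since $\fg_\bR+\bi\fg_\bR = \fg_\bC$, we have $T\cO_\mathrm{cl}+JT\cO_\mathrm{cl} = T\check D$, and the elementary CR identity then yields $\dim_\bC T^{\mathrm{CR}}\cO_\mathrm{cl} = \dim_\bR\cO_\mathrm{cl} - \dim_\bC\check D$, proving (a) $\Leftrightarrow$ (b). The real subspace $\fg_\bR \cap \fp$ is the real form of $\fp \cap \bar\fp$, so $\dim_\bR(\fg_\bR\cap\fp) = \dim_\bC(\fp\cap\bar\fp)$; substituting rewrites the CR-codimension as $\dim_\bC\fp - \dim_\bC(\fp\cap\bar\fp) \geq 0$, vanishing iff $\fp = \bar\fp$. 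This last equality says precisely that $P = \tStab_{G_\bC}(F^\sb_\infty)$ is defined over $\bR$, and then $\cO_\mathrm{cl} = G_\bR/(G_\bR \cap P) = G_\bR/P_\bR$, which is (c).

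For the Hodge--Tate conclusion, write $\fp = \bigoplus_{p\geq 0} I^{p,q}_\fg$ and $\bar\fp = \bigoplus_{q\geq 0} I^{p,q}_\fg$, so that (c) is equivalent to the constraint
\[
(\ast) \qquad I^{p,q}_\fg \neq 0 \ \Longrightarrow\ (p\geq 0 \Leftrightarrow q\geq 0).
\]
The $\fsl_2$-triple $(N,Y,N^+)$ of the polarized limiting MHS has generators of Hodge types $(-1,-1), (0,0), (1,1)$, so Schmid's isomorphism $N^k : \tGr^{W(N)}_k \xrightarrow{\sim} \tGr^{W(N)}_{-k}$ refines Hodge-component-wise to $(\tad N)^{p+q}:I^{p,q}_\fg \xrightarrow{\sim} I^{-q,-p}_\fg$ whenever $p+q \geq 0$. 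A two-step elimination then finishes the argument. First, for $s > 0$, $(\tad N)^s$ sends $I^{0,s}_\fg$ into $I^{-s,0}_\fg$, which vanishes by $(\ast)$; hence $I^{0,s}_\fg = 0$, and by conjugate symmetry $I^{s,0}_\fg = 0$ as well. Second, for $0 < r < s$, $(\tad N)^r$ sends $I^{r,s}_\fg$ into the just-vanishing $I^{0,s-r}_\fg$, so $(\tad N)^{r+s}$ annihilates $I^{r,s}_\fg$ --- contradicting its required isomorphism with $I^{-s,-r}_\fg$ --- forcing $I^{r,s}_\fg = 0$. The case $r,s < 0$ with $r \neq s$ is symmetric, handled either via $N^+$ or by the already-established iso $I^{r,s}_\fg \cong I^{-s,-r}_\fg$. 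Thus $I^{p,q}_\fg = 0$ for all $p \neq q$, and Proposition~\ref{P:I5} transfers Hodge--Tate from $\fg$ to $V$.

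The main obstacle is the last paragraph; the equivalences reduce to formal dimension counts. The substantive points to verify are (i) that Schmid's $\fsl_2$-isomorphism restricts to each Hodge component $I^{p,q}_\fg$, and (ii) the cascade of $(\tad N)^k$-contradictions driven by $(\ast)$.
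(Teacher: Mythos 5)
Your proof is correct. For the equivalence of \emph{(a)}--\emph{(c)} you and the paper do essentially the same thing: the paper reads the statement off the bigraded decomposition $\fg_\bC=\op\,\fg^{p,q}$ and the formulas \eqref{E:TNsp} for $T\cO$, $T^{\mathrm{CR}}\cO$ and $\fr$ (Section \ref{S:TR} declares the equivalence ``evident'' from these), and your identity $\tdim_\bC T^{\mathrm{CR}}\cO_{\mathrm{cl}}=\tdim_\bC\fp-\tdim_\bC(\fp\cap\bar\fp)$ is just that count made explicit. Where you genuinely diverge is the Hodge--Tate implication. The paper obtains it as Corollary \ref{C:TR} of Theorem \ref{T:cp_orb}, whose proof runs through Lemma \ref{L:c_orb} (compactness of the imaginary roots in $\Delta(-,+)$) and sign computations with the polarization form $Q_\fg$; total reality then kills the residual antidiagonal and $|p-q|=2$ components. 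You instead argue directly from $\fp=\bar\fp$ together with the bigraded refinement $(\tad\,N)^{p+q}:I^{p,q}_\fg\stackrel{\sim}{\to}I^{-q,-p}_\fg$ of Schmid's isomorphism, eliminating all off--diagonal $I^{p,q}_\fg$ in two steps. This is more self--contained: it needs neither the root--theoretic input of Lemma \ref{L:c_orb} nor the polarization--form arguments of Theorem \ref{T:cp_orb}, only the fact that the $N$--strings are uninterrupted and symmetric about weight zero. Both points you flag as substantive are sound: $N^{k}:\tGr_k\to\tGr_{-k}$ is an isomorphism of bigraded spaces of type $(-k,-k)$, so it restricts to an isomorphism on each $I^{p,q}_\fg$ with $p+q=k\ge0$, and the cascade closes up as you describe.

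One bookkeeping caveat. The $P$ in part \emph{(c)} stabilizes $F^\sb_\infty$, whose Lie algebra by \eqref{E:F4} is $\fp=\op_{q\le0}I^{\sb,q}_\fg$, not $\op_{p\ge0}I^{p,q}_\fg$ (the latter stabilizes $F^\sb_{\mathrm{lim}}$, a different point of $\check D$). The correct form of $(\ast)$ is therefore ``$I^{p,q}_\fg\neq0\Rightarrow(p\le0\Leftrightarrow q\le0)$''. The two conditions differ only on the axes, and your elimination works under either: with the correct version $I^{0,s}_\fg$ ($s>0$) dies immediately from $(\ast)$, and $I^{-s,0}_\fg$ dies via conjugation and the string isomorphism $I^{s,0}_\fg\cong I^{0,-s}_\fg$. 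So nothing breaks, but the identification of $\fp$ should be fixed.
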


\noindent The theorem is proved as Theorem \ref{T:TR} and Corollary \ref{C:TR}.

From an algebro--geometric perspective it is not surprising that the most degenerate limiting mixed Hodge structure is one of Hodge--Tate type.  More interesting is that there are both Hodge theoretic (Lemma \ref{L:HT-HN}) and Lie theoretic (Lemma \ref{L:JMP} and Remark \ref{R:HTinG/B}) obstructions to a given type of polarized Hodge structure being able to degenerate to one of Hodge--Tate type.

In general there is the following

\begin{theorem} \label{T:I7}
Suppose that the limiting mixed Hodge structure $(V,W_\sb(N),F^\sb)$ is sent to the closed orbit under the reduced limit period mapping \eqref{E:I5} in the closed orbit.  Then Deligne splitting $\fg_\bC = \op I^{p,q}_\fg$ associated with the induced limiting mixed Hodge structure $(\fg,W_\sb(N)_\fg,F^\sb_\fg)$ satisfies:  
\begin{eqnarray*}
  I_\fg^{p,-q} = 0 & \hbox{for all} & p\not=q > 0\,, \\
  I_\fg^{p,-p} = 0 & \hbox{for all} & \hbox{odd } \ p \ge 3 \,,\\
  I_\fg^{p,q} = 0 & \hbox{for all} & p+q\not=0 \quad \hbox{with} \quad |p-q| > 2 \,.
\end{eqnarray*}
\end{theorem}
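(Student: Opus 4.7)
\emph{Proof plan.} My plan is to translate the hypothesis $F^\sb_\infty \in \cO_\mathrm{cl}$ into a Lie-theoretic parabolicity condition and then extract the three vanishings via the $\fsl_2$-decomposition of $\fg_\bC$. First, by the standard Cattani--Kaplan--Schmid $\bR$-split modification (which preserves the Deligne bigrading), we may assume $(V, W_\sb(N), F^\sb)$ is $\bR$-split. Next, the $G_\bR$-orbit $\cO = G_\bR \cdot F^\sb_\infty$ is closed in $\check D$ if and only if the stabilizer $P \cap G_\bR$ is a real parabolic subgroup of $G_\bR$, equivalently if and only if $\fp \cap \bar\fp = \bigoplus_{p,q \geq 0} I^{p,q}_\fg$ is a parabolic subalgebra of $\fg_\bC$.

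The main tool is the $\fsl_2$-triple $(N, Y_N, N^+)$ associated to the $\bR$-split LMHS, with $N \in I^{-1,-1}_\fg$, $Y_N \in I^{0,0}_\fg$, and $N^+ \in I^{1,1}_\fg$; it decomposes $\fg_\bC$ into irreducible $\fsl_2$-strings. From a highest-weight vector $v \in I^{p_0,q_0}_\fg$ the string consists of $v,\ Nv,\ \ldots,\ N^{p_0+q_0}v$ with $N^k v \in I^{p_0-k,\, q_0-k}_\fg$, so that $p-q$ is constant along each string while $p+q$ descends in steps of two from $p_0+q_0$ down to $-(p_0+q_0)$. Condition (3) emerges from the clash between this string structure and parabolicity: a string with $|p-q|>2$ extending to some $(p,q)$ with $p+q \neq 0$ must pass through a mixed-sign quadrant of the bigrading off the anti-diagonal, and at such positions both the associated root space and its conjugate fall outside $\fp \cap \bar\fp$, precluding any Borel subalgebra from sitting inside. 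Condition (1), the vanishing of off-anti-diagonal mixed-quadrant entries $I^{p,-q}_\fg$ for $p \neq q > 0$, follows from the same parabolic obstruction applied directly to those positions.

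Condition (2) requires a more delicate parity argument on the anti-diagonal: the Weil operator of the pure weight-$0$ graded piece $\mathrm{Gr}^W_0 \fg$ acts on $I^{p,-p}_\fg$ by the sign $(-1)^p$, and the closed-orbit condition (equivalently $\fk_\bC + \fp = \fg_\bC$ by Matsuki duality, where $\fk_\bC$ is the complexified maximal compact subalgebra adapted to the $\bR$-split LMHS) forces the $-1$-eigenspace contributions at odd $|p| \geq 3$ to vanish; the entries with $|p| \leq 2$ are protected by their interaction with the ambient $\fsl_2$-triple $(N, Y_N, N^+)$, while odd $|p| \geq 3$ anti-diagonal entries lack this compensation. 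The main technical obstacle is precisely this parity analysis, which demands an explicit description of the Cartan involution on $\fg_\bR$ adapted to the $\bR$-split LMHS, together with careful bookkeeping of the Hodge--Riemann bilinear signature on $\mathrm{Gr}^W_0 \fg$.
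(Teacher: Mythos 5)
Your derivation of conditions (1) and (3) rests on the claim that the orbit $G_\bR\cdot F^\sb_\infty$ is closed if and only if $\fp\cap\bar\fp=\bigoplus_{p,q\ge0}I^{p,q}_\fg$ is a parabolic subalgebra of $\fg_\bC$. That equivalence is false, and false in exactly the way that matters here. If $\fp\cap\bar\fp$ contained a Borel, then for every root $\a$ one of $\fg^{\pm\a}$ would lie in the quadrant $\{p,q\ge0\}$; this excludes not only the off--anti-diagonal mixed positions but also \emph{every} anti-diagonal position $I^{p,-p}_\fg$ with $p\ne0$, since both $I^{p,-p}_\fg$ and $I^{-p,p}_\fg$ lie outside that quadrant. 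Your criterion would therefore force the limiting mixed Hodge structure to be Hodge--Tate: it characterizes the \emph{totally real} closed orbit (Theorem \ref{T:TR}/\ref{T:I6}), not the closed orbit in general, and it contradicts the non--Hodge--Tate degenerations to the closed orbit produced in Theorem \ref{T:I8} (see Figure \ref{f:Del2}.b, where $I^{\pm2,\mp2}_\fg\ne0$). A minimal counterexample to the equivalence is $SU(2)$ acting on $\bP^1$: the single orbit is closed, yet $\fp\cap\bar\fp=\fh$ is not parabolic. The correct input is the Matsuki-dual statement you mention only in passing, $\fk_\bC+\fp=\fg_\bC$ for a $\s$- and $\theta$-stable Cartan, which the paper converts (Lemma \ref{L:c_orb}) into: every root in the open mixed quadrants is imaginary \emph{and compact}. ``Imaginary'' places such roots on the anti-diagonal, which is condition (1); your $N$-string argument then correctly yields condition (3). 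So those two conditions are salvageable, but not from the premise you state.

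Condition (2) is not proved; you name plausible ingredients but defer the argument, and the ingredients you emphasize are not the operative ones. The actual mechanism is: by condition (1), $N$ annihilates $I^{p,-p}_\fg$ for $|p|\ge2$ (its image would land in a forbidden mixed position off the anti-diagonal), so these spaces are \emph{primitive} in $\tGr^{W_\sb(N)_\fg}_0$ and the second Hodge--Riemann relation applies, giving $(-1)^pQ_\fg(v,\bar v)>0$ for $0\ne v\in I^{p,-p}_\fg$. Since $-Q_\fg$ is the Killing form and $v$ is a root vector for an imaginary root, the sign of $Q_\fg(v,\bar v)$ decides compact versus noncompact; odd $p$ forces the root to be noncompact, contradicting the compactness required by Lemma \ref{L:c_orb}. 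This is also why $p=\pm1$ is exempt: $I^{1,-1}_\fg$ need not be primitive (it can be in the image of $N$ from $I^{2,0}_\fg$), so the Hodge--Riemann inequality gives no constraint there --- not because of ``protection by the ambient $\fsl_2$-triple.'' Your appeal to ``$\fk_\bC+\fp=\fg_\bC$ forcing the $-1$-eigenspace contributions to vanish'' does not by itself produce this sign computation, which is the substance of the proof of (2).
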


\noindent The constraints of Theorem \ref{T:I7} are illustrated in Figure \ref{f:c_orb}.b. We will prove the slightly stronger Theorem \ref{T:cp_orb}.

For period domains one may reconstruct the limiting mixed Hodge structure $(V,W_\sb(N),F^\sb)$ from $(\fg,W_\sb(N)_\fg,F^\sb_\fg)$; doing so, Theorem \ref{T:I7} yields

\begin{theorem} \label{T:I8}
Let $D$ be a period domain parameterizing weight $n$ Hodge structures.  If there exists a limiting mixed Hodge structure $(V,W_\sb(N),F^\sb)$ that maps to the closed $G_\bR$--orbit in $\check D$, but is \emph{not} of Hodge--Tate type, then $n = 2m$ is even and:
\begin{a_list_emph}
\item 
For $k\not=0$, $\tGr^{W_\sb(N)}_{n+k,\tprim}$ is of Hodge--Tate type.  
\emph{(Thus $k$ is even.)}
\item
For $k\not=0$, $\tGr^{W_\sb(N)}_{n+k,\tprim} \not=0$ implies $k \equiv 2$ (mod) $4$.
\item
$\tGr^{W_\sb(N)}_{n,\tprim} \not=0$, and the only nonzero $I^{p,q}_\tprim$, with $p+q=n$, are 
\[
  I^{m+1,m-1}_\tprim \tand
  I^{m-1,m+1}_\tprim \,.
\] 
\end{a_list_emph}
\end{theorem}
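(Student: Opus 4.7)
The plan is to deduce Theorem~\ref{T:I8} from Theorem~\ref{T:I7} via the reconstruction of the $V$-Deligne splitting from the $\fg$-Deligne splitting available for period domains. For $\fg = \fsp(V,Q)$ or $\fso(V,Q)$, the bigrading on $\fg$ is compatible with the one on $V$: $I_\fg^{a,b}$ consists of endomorphisms of bidegree $(a,b)$ with respect to the Deligne splitting of $V$. Thus $I_\fg^{a,b} \neq 0$ if and only if the support $S = \{(p,q) : I_V^{p,q} \neq 0\}$ contains two elements with difference $(a,b)$, and the vanishing statements of Theorem~\ref{T:I7} translate directly into forbidden differences among elements of $S$.

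Concretely, Theorem~\ref{T:I7} restricts $S - S$ to the ``allowed set'' $A \subset \bZ^2$: on the anti-diagonal $a+b = 0$ one needs $|a| \le 2$ or $a$ even; off the anti-diagonal, one needs both coordinates of the same sign and $|a-b| \le 2$. Next I would decompose $V_\bC$ via the monodromy $\fsl_2$-action into $N$-strings of primitive Hodge pieces $P_j^{a,b} \subset \tGr^{W(N)}_{n+j,\tprim}$; each $P_j^{a,b}$ contributes positions $(a-l, b-l) \in S$ for $l = 0, \dots, j$, along with the complex-conjugate positions coming from $P_j^{b,a}$.

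To establish (a), I would consider, for each primitive $P_j^{a_0,b_0}$ with $j \ge 1$ and $a_0 \ne b_0$, a pair formed by a position of the $P_j^{a_0,b_0}$-string and one of the $P_j^{b_0,a_0}$-string; parametrising the difference as $(-d+c, d+c)$ with $d = a_0 - b_0$ and $c$ an index offset, a short case analysis in $A$ (using the skew bound $|a-b| \le 2$ and the same-sign condition off the anti-diagonal) rules out $d > 0$. Hence $P_j$ is Hodge--Tate, so $j$ is even, giving ``Thus $k$ is even'' in (a). For (c), the non-Hodge--Tate hypothesis produces some $P_0^{a_0,b_0}$ with $d := a_0 - b_0 \neq 0$; the anti-diagonal exclusion of odd $|d| \ge 3$ in Theorem~\ref{T:I7} together with the interaction of $(a_0,b_0)$ with any diagonal position $(m,m)$ in $S$ force $d = 2$, so $n = 2m$ is even, and a further check then shows $I^{m,m}_{\tprim} = 0$, giving the structure in (c). Finally, for (b), given $(m+1,m-1), (m-1,m+1) \in S$, a parity analysis of the differences between these and the $N$-string positions $(m+k-l, m+k-l)$, $l = 0, \dots, 2k$, of a Hodge--Tate higher primitive $P_{2k}^{m+k,m+k}$ yields the congruence $2k \equiv 2 \pmod 4$.

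The main obstacle is the bookkeeping for the parity conditions in (b) and (c). The skew bound $|a-b| \le 2$ carries much of the weight, but pinning down the exact modular constraint on $k$ in (b) requires careful attention both to which $N$-string positions remain in $A$ and, most likely, to the polarization-sign information coming from the Hodge--Riemann bilinear relations on the primitive pieces. The same-sign condition off the anti-diagonal and the alternating signs of the primitive polarization forms must be reconciled simultaneously to obtain $2k \equiv 2 \pmod 4$.
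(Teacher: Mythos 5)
Your overall strategy --- encoding the Deligne splitting of $V$ by its support $S$, observing that the nonvanishing of $I^{a,b}_\fg$ is governed by differences of elements of $S$, and importing the vanishing statements of Theorem \ref{T:I7} as forbidden differences --- is exactly the paper's Step 2 (the comparison of the weights of $V$ with the roots of $\fg$ via the $(L,\overline L)$--eigenvalues, culminating in \eqref{E:Ipq}), and your treatment of (a)/(c) via the centered positions against an off--anti-diagonal position is the paper's Step 4. But the two places where your sketch asserts that the difference analysis closes the argument are precisely where the paper has to bring in additional input, and as written your proposal has genuine gaps there.

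First, the claim that the conjugate--pair case analysis ``rules out $d>0$'' fails for $d=\pm1$: if $a_0-b_0=1$, the differences between the two conjugate $N$--strings are $(1+c,-1+c)$, and \emph{every} one of these lies in your allowed set $A$ (for instance $(1,-1)$, $(2,0)$, $(0,-2)$, $(1,1)$ are all permitted by \eqref{SE:cp}; note \eqref{E:cp1} requires both entries strictly nonzero and of opposite sign). The configuration $S=\{(1,0),(0,1),(0,-1),(-1,0)\}$ in shifted coordinates violates nothing in $A$, yet it must be excluded --- both for the parenthetical ``$k$ is even'' in (a) and for your deduction that $n$ is even. The paper handles odd $p-q$ by separate means: for odd weight, $G_\bR=\tSp(2g,\bR)$ is split, the closed orbit is totally real, and Theorem \ref{T:I6} forces Hodge--Tate (Step 1); for even weight, Step 3 shows an odd difference confines the support to $|p-q|\le 1$ and then appeals to Lemma \ref{L:HT-HN} and Theorem \ref{T:HT-PD}. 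Your combinatorial framework has no access to either argument. Second, the congruence $k\equiv 2\ (\mathrm{mod}\ 4)$ in (b) is not a consequence of the vanishing statements of Theorem \ref{T:I7} at all: it comes from the final assertion of Theorem \ref{T:cp_orb}, that $N$--strings in $\oplus_{q-p=2}\,\fg^{p,q}$ have length $\equiv 3$ mod $4$, whose proof uses the second Hodge--Riemann relation on the primitive graded pieces of $\fg$. You correctly suspect that polarization signs are needed, but without that input the parity analysis you describe cannot produce the mod $4$ statement; the paper's route is to show that $\tilde I^{p,p}_\tprim\neq0$ together with $\tilde I^{-1,1}\neq0$ forces $I^{p-1,p+1}_{\fg,\tprim}\neq0$ and then to apply that string-length result.
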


\noindent 
The theorem is proved in Section \ref{S:nHT-PD}.  A convenient schematic to picture a limiting mixed Hodge structures is its decomposition into $N$--strings
\begin{equation} \label{E:lmhs-pic}
 \begin{array}{c}
  H^0(n) \ \stackrel{N}{\longrightarrow} \ H^0(n-1) \ \stackrel{N}{\longrightarrow} 
  \cdots \stackrel{N}{\longrightarrow} \ H^0(1) \ \stackrel{N}{\longrightarrow} \ H^0 \\
  H^1(n-1)\ \stackrel{N}{\longrightarrow} 
  \cdots \stackrel{N}{\longrightarrow} \ H^1 \\
  \vdots \\
  H^{n-1}(1) \ \stackrel{N}{\longrightarrow} \ H^{n-1} \\
  H^n
\end{array}
\end{equation}
where 
\[
  H^k \ = \ N^{n-k} \,\tGr^{W_\sb(N)}_{2n-k,\tprim} 
\]
is a polarized Hodge structure of weight $k$.  (It may happen that $H^k$ is a Tate twist of a lower weight Hodge structure.)  Under the schematic \eqref{E:lmhs-pic}, the possibilities in Theorem \ref{T:I8} are:

\medskip

\noindent\fbox{$n=2$}  We have $H^1=0$, $H^0 \not=0$ and $H^2$ has type $(\ast,0,\ast)$.  In terms of the $i_\tprim^{p,q}$ the Hodge numbers are 
\[
  h^{2,0} \ = \ i^{2,0}_\tprim + i^{2,2}_\tprim \,,\quad
  h^{1,1} \ = \ i^{2,2}_\tprim \,.
\]

%\medskip

\noindent\fbox{$n=4$}  We have $H^0 = H^1 = H^3=0$, $H^2 \not=0$ is of Hodge--Tate type and $H^4$ has type $(0,\ast,0,\ast,0)$.  

\medskip

\noindent\fbox{$n=6$}  We have $H^1= H^2= H^3= H^5=0$, at least one of $H^0$ and $H^4$ is nonzero and of Hodge--Tate type, and $H^6$ has type $(0,0,\ast,0,\ast,0,0)$.  

\begin{corollary}
If the limiting mixed Hodge structure corresponds to a point in the closed orbit, then $\tGr_\ast^{W_\sb(N)}$ is rigid in the sense that it does not admit a non--trivial variation of Hodge structure.
\end{corollary}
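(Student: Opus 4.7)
The plan is to reduce rigidity of $\tGr_\ast^{W_\sb(N)}V$ to rigidity of its primitive constituents, and then verify the latter directly using Theorem~\ref{T:I8}.

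By the Jacobson--Morosov/Lefschetz decomposition, $\tGr_\ast^{W_\sb(N)}V$ is built from the primitive polarized Hodge structures
\[
  P_l \ := \ \tGr_{n+l,\tprim}^{W_\sb(N)}V \,, \qquad l \ge 0 \,,
\]
via the canonical $\fsl_2$--action of the triple $(N, Y, N^+)$ intrinsic to the limiting MHS.  Since any variation of $\tGr_\ast^{W_\sb(N)}V$ as a polarized graded Hodge structure preserves this intrinsic $\fsl_2$--structure, the decomposition into primitives is preserved under variation; conversely a variation of the primitive parts determines a variation of $\tGr_\ast^{W_\sb(N)}V$.  Rigidity of $\tGr_\ast^{W_\sb(N)}V$ therefore reduces to rigidity of each $P_l$.

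I would then verify that each $P_l$ is rigid using Theorem~\ref{T:I8}.  If the LMHS is itself of Hodge--Tate type, every $P_l$ is Hodge--Tate, so its period domain is a single point and rigidity is immediate.  Otherwise $n = 2m$ is even: for $l\ne 0$, part (a) says $P_l$ is Hodge--Tate, hence rigid as above; for $l=0$, part (c) says $P_0$ has only the two Hodge types $(m+1,m-1)$ and $(m-1,m+1)$, so by Griffiths transversality the horizontal tangent space at $P_0$ consists of $(-1,1)$--type endomorphisms, i.e.\ maps $V^{m+1,m-1}\to V^{m,m}$ and $V^{m,m}\to V^{m-1,m+1}$.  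Since $V^{m,m}=0$ in $P_0$, this space vanishes and $P_0$ is rigid.

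The main subtlety lies in the reduction step: one needs to argue that the natural $\fsl_2$--structure on the graded limiting MHS is preserved under VHS, so that the problem genuinely reduces to the primitives (note that without this, $\tGr_k^{W_\sb(N)}V$ viewed as a PHS with its induced Hodge numbers generally admits non--trivial horizontal directions coming from cross--terms between distinct Lefschetz components).  Once this is granted, Theorem~\ref{T:I8} makes the rigidity of each primitive piece tautological---either the period domain is a point (Hodge--Tate case) or its horizontal tangent vanishes for elementary dimensional reasons.
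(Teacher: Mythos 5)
Your proof is correct and is essentially the paper's argument: the paper disposes of the corollary with the single observation that the infinitesimal period relation is trivial for $\tGr_\ast^{W_\sb(N)}$, which is exactly what you verify on each primitive summand (Hodge--Tate pieces have point period domains, and $\tGr^{W_\sb(N)}_{n,\tprim}$ has $\fg^{-1,1}=0$ since its only Hodge types are $(m\pm1,m\mp1)$). Your explicit reduction to the primitive decomposition via the flat $\fsl_2$--structure is the step the paper leaves implicit, and it is the right way to make the one--line reason rigorous, since the full $\tGr^{W_\sb(N)}_n$ with types $(m+1,m-1)$, $(m,m)$, $(m-1,m+1)$ would otherwise admit nonzero horizontal directions.
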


\noindent The reason is that the infinitesimal period relation is trivial for the $\tGr_\ast^{W_\sb(N)}$.

The above results were under the assumption that the limit period map sends \eqref{E:I4} to a point in the closed orbit.  As we have discussed, in some, but not all, cases the limiting mixed Hodge structure is of Hodge--Tate type.  In the other direction we have 

\begin{proposition} \label{P:I9} 
If the limiting mixed Hodge structure is of Hodge--Tate type, then the limit period is a point in the closed orbit.
\end{proposition}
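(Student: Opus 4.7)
The plan is to transfer the Hodge--Tate hypothesis to the adjoint limiting mixed Hodge structure via Proposition \ref{P:I5}, use the resulting concentration of the Deligne bigrading on the diagonal to show that the stabilizer of $F^\sb_\infty$ in $G_\bC$ is $\bR$--split, and then conclude that the $G_\bR$--orbit through $F^\sb_\infty$ is the closed orbit.

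By Proposition \ref{P:I5} the induced limiting mixed Hodge structure $(\fg,W_\sb(N)_\fg,F^\sb_\fg)$ is Hodge--Tate, so its Deligne bigrading collapses to
\[
  \fg_\bC \ = \ \bigoplus_p I^{p,p}_\fg\,,
\]
with $I^{p,q}_\fg = 0$ for $p \neq q$.  Because a Hodge--Tate mixed Hodge structure is automatically $\bR$--split, conjugation relative to $\fg_\bR$ sends $I^{p,q}_\fg$ to $I^{q,p}_\fg$; in the present case this forces each diagonal piece $I^{p,p}_\fg$ to be conjugation stable, hence to be the complexification of a real subspace of $\fg_\bR$.  Consequently
\[
  F^0_\fg \ = \ \bigoplus_{p \geq 0} I^{p,p}_\fg
\]
is defined over $\bR$.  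Since this is the Lie algebra of $P := \tStab_{G_\bC}(F^\sb_\infty)$, the parabolic $P$ is $\bR$--split.

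It remains to identify the $G_\bR$--orbit through $F^\sb_\infty$ with $\cO_\mathrm{cl}$.  Writing $P_\bR = G_\bR \cap P$, the standard dimension count for real forms gives
\[
  \dim_\bR (G_\bR \cdot F^\sb_\infty) \ = \ \dim_\bR G_\bR - \dim_\bR P_\bR \ = \ \dim_\bC G_\bC - \dim_\bC P \ = \ \dim_\bC \check D\,,
\]
which matches the dimension of the closed orbit recorded in Theorem \ref{T:I6}(b).  By the classical theorem of Wolf on $G_\bR$--orbits on a complex flag variety, an orbit whose $G_\bC$--stabilizer is $\bR$--split is automatically closed; under the running uniqueness hypothesis for $\cO_\mathrm{cl}$ we conclude $F^\sb_\infty \in \cO_\mathrm{cl}$.

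The main obstacle is not computational but conceptual: once Proposition \ref{P:I5} is applied, the reduction of Hodge--Tateness to reality of the parabolic $P$ is a matter of bookkeeping with the Deligne bigrading, and the identification of $\bR$--split parabolic stabilizers with closed $G_\bR$--orbits is the one classical input from Wolf's orbit theory.  If one prefers to stay entirely within the framework of the paper, a parallel route is to read off $T^\mathrm{CR}\cO = 0$ directly from \eqref{SE:TN} using $I^{p,q}_\fg = 0$ for $p \neq q$ and then invoke the equivalence (a)$\Leftrightarrow$(c) of Theorem \ref{T:I6}.
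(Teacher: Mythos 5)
Your proof is correct, and its first step --- transferring the Hodge--Tate hypothesis to $(\fg,W_\sb(N)_\fg,F^\sb_\fg)$ via Proposition \ref{P:I5} --- is exactly the paper's. Where you diverge is in how you certify closedness: the paper observes that $I^{p,q}_\fg=0$ for $p\ne q$ forces the root set $\Delta(-,+)$ to be empty, so the criterion of Lemma \ref{L:c_orb} holds vacuously; you instead show that the stabilizer of $F^\sb_\infty$ is defined over $\bR$, deduce that the orbit has the minimal possible real dimension $\tdim_\bC\check D$, and invoke Wolf's theorem to conclude it is the (unique) closed orbit. That external input is legitimate and the dimension count is right, so this route buys a proof that does not require the root-theoretic bookkeeping of Section \ref{S:c_orb}. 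Two small repairs are needed. First, the stabilizer of the \emph{limit point} $F^\sb_\infty$ is $\op_{p\le 0}I^{p,p}_\fg$ rather than $F^0_\fg=\op_{p\ge0}I^{p,p}_\fg$ (the latter stabilizes $F^\sb_\tlim$); since both are conjugation--stable in the Hodge--Tate case nothing breaks, but the identification should be stated correctly. Second, your appeals to Theorem \ref{T:I6} are mildly circular: its clauses (b) and (a)$\Leftrightarrow$(c) are assertions \emph{about} $\cO_\mathrm{cl}$ and cannot by themselves certify that a given orbit equals $\cO_\mathrm{cl}$; the correct references are Lemma \ref{L:c_orb} and the observation in Section \ref{S:TR} that a totally real orbit is necessarily the unique closed orbit. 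With that reference fixed, your closing ``parallel route'' --- reading off $T^\mathrm{CR}\cO=0$ from \eqref{E:TNsp} --- is precisely the paper's argument.
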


\noindent The proposition is proved in Section \ref{S:HT}.

Given a degeneration of polarized Hodge structures \eqref{E:I4} whose reduced limit mapping \eqref{E:I5} goes to a point in a $G_\bR$--orbit $\cO \subset \del D$, for the limiting mixed Hodge structure $(V,W_\sb(N),F^\sb)$ the above may be summarized as follows:
\begin{a_list}
\item
If $\cO$ is the closed orbit, and is totally real, then $(V,W_\sb(N),F^\sb)$ is Hodge--Tate.
\item
If $(V,W_\sb(N),F^\sb)$ is Hodge--Tate, then $\cO$ is the closed orbit.
\item
If $\cO$ is the closed orbit and $(V,W_\sb(N),F^\sb)$ is \emph{not} Hodge--Tate, then the nonzero $\tGr^{W_\sb(N)}_{n+k,\tprim}$ have $k \equiv 2$ (mod) $4$, and are Hodge--Tate, and $\tGr^{W_\sb(N)}_{n,\tprim}$ is as close to being Hodge--Tate as the Hodge numbers of $(V,Q,F^\sb)$ will allow.
\end{a_list}

%------------------------------------------------------------------------------
\subsection{Notation}
%------------------------------------------------------------------------------

\begin{bcirclist}
\item $\Delta = \{ t \in \bC \ : \ |t| < 1\}$ is the unit disc, $\Delta^* = \{ t \in \Delta \ : \ t \not=0\}$ is the punctured unit disc, and $\bH = \{ z \in \bC \ : \ \tIm(z) > 0 \}$ is the upper--half plane.
\item $\sX \to \Delta$ is a semi--stable reduction.
\item $\check D = G_\bC / P$ is a generalized flag variety containing $D = G_\bR/R$ as an open $G_\bR$--orbit with a compact isotropy group $R = G_\bR \cap P$.
\item $\Phi : \Delta^* \to \Gamma_T \backslash D$ is a period mapping with lift $\tilde \Phi : \bH \to D$ and $\tilde \Phi_* : T\bH \to I \subset T \check D$, where $I$ denotes the infinitesimal period relation.
\item $\tilde B(N)\subset \check D$ is the set of $N$--polarized limiting mixed Hodge structures, and $B(N)$ the boundary component consisting of equivalence classes of limiting mixed Hodge structures.
\item $\Phi_\infty : B(N) \to \del D$ is the reduced limit period map.
\item $F^\sb_\mathrm{lim} = \lim_{t\to0}\left( e^{-\ell(t)N} F^\sb_t \right) = \lim_{\tIm(z)\to\infty} e^{-zN}\tilde\Phi(z)$ where $\ell(t) = \frac{1}{2\pi\bi} \log t$ and $t = e^{2\pi\bi z}\in\Delta^*$ for $z \in \bH$.
\item $F^\sb_\infty = \lim_{\tIm(z) \to \infty} \tilde\Phi(z)$.
\item $\sT \subset R$ is a compact maximal torus of $G_\bR$.
\item $\fg_\bC$ and $\fg_\bR$ are the Lie algebras of $G_\bC$ and $G_\bR$.
\item $(V,W_\sb(N),F^\sb)$ a limiting mixed Hodge structure with $F^\sb = F^\sb_\mathrm{lim}$, $(V,W_\sb, \tilde F^\sb)$ is the associated $\bR$--split limiting mixed Hodge structure associated, and $(\fg , W_\sb(N)_\fg , F^\sb_\fg)$ the induced adjoint limiting mixed Hodge structures on $\fg$.
\item $V_\bC = \op I^{p,q}$ and $\fg_\bC = \op I^{p,q}_\fg$ are the Deligne splittings of a limiting mixed Hodge structure $(V,W_\sb(N),F^\sb)$ and the induced adjoint limiting mixed Hodge structure $(\fg,W_\sb(N)_\fg , F^\sb_\fg)$.
\item $-Q_\fg$ is the Killing form on $\fg$.
\item $\fh \subset \fg_\bC$ is a Cartan subalgebra with roots $\Delta \subset \fh^*$.\footnote{We shall use the notation $\Delta$ for both the set of roots of $(\fg_\bC,\fh)$ and for the unit disc in $\bC$; the context should make it clear which use of $\Delta$ is being made.}
\item Given $x \in \check D$, $\cO_x = G_\bR \cdot x$ is the $G_\bR$--orbit.
\end{bcirclist}

%------------------------------------------------------------------------------
\section{Reduced limit period mappings}
%------------------------------------------------------------------------------

%------------------------------------------------------------------------------
\subsection{Generalized flag varieties} \label{S:GFV}
%------------------------------------------------------------------------------

A \emph{generalized flag variety} is a homogeneous complex manifold 
\[
  \check D \ = \ G_\bC / P 
\]
where $G_\bC$ is a complex, semi--simple Lie group and $P$ is a parabolic subgroup.\footnote{We use the notation $\check D$ because we shall mainly think of it as the compact dual of a generalized flag domain.}  When $P = B$ is a Borel subgroup we shall use the term \emph{flag variety}.  At the reference point $x_o = P \one$ there is a natural identification of the holomorphic tangent space
\begin{equation}\label{E:TD}
  T_{x_o} \check D \ = \ \fg_\bC/\fp \,.
\end{equation}
%As a set, $\check D = \{ \tAd(g)\cdot \fp \ : \ g \in G_\bC \}$ is the set of conjugacy classes of the Lie algebra $\fp$ of $P$.

We shall describe $\fp$ in terms of the roots associated to a Cartan subalgebra $\fh \subset \fp$, which always exists.  The root space decomposition of $(\fg_\bC,\fh)$ is 
\[
  \fg_\bC \ = \ \fh \ \op \ \bigoplus_{\a\in\Delta} \fg^\a  
\]
where $\Delta \subset \fh^*$ is the set of \emph{roots} and $\fg^\a$ is the one--dimensional \emph{root space}.  We let $\Delta^+ \subset \Delta$ denote a choice of \emph{positive roots}.  This determines a Borel subalgebra $\fb \supset \fh$ by 
\[
  \fb \ = \ \fh \ \op \ \bigoplus_{\a\in\Delta^+} \fg^{-\a} \,.
\]
Conversely, a choice of Borel $\fb \supset \fh$ determines the positive roots by \[
  \Delta^+ \ = \ \{ \a\in\Delta \ : \ \fg^{-\a} \subset \fb \}\,.
\]
Let $\Delta^+_\mathit{s} \subset \Delta^+$ denote the set of \emph{simple roots} relative to that choice.

The first description of parabolic subalgebras $\fp \subset \fg$ is in terms of subsets $\Sigma \subset \Delta^+_\mathit{s}$.   Denoting by $\langle \Sigma \rangle \subset \Delta$ the roots spanned by $\Sigma$, we set 
\[
  \fp_\Sigma \ = \ 
  \underbrace{\fh \ \op \ \Big( \bigoplus_{\a\in\langle\Sigma\rangle} \fg^\a \Big)}_{\fp_r} \ \op \ 
  \underbrace{\Big( \bigoplus_{\b \in \Delta^- \backslash \langle\Sigma\rangle^-} \fg^\b \Big)}_{\fp_n} 
\]
where $\Delta^- = -\Delta^+$ and $\langle\Sigma\rangle^- = \langle\Sigma\rangle \cap \Delta^-$.  Then $\fp_\Sigma$ is a parabolic subalgebra with reductive Levi factor $\fp_r$ and nilpotent radical $\fp_n$.  Note that
\[
  \fh \ \subset \ \fb \ \subset \ \fp_\Sigma \,.
\]  
When a choice of Cartan and Borel has been made, any parabolic of the form $\fp_\Sigma$ is a \emph{standard parabolic}.  Every parabolic subalgebra $\fp$ of $\fg$ is $\tAd(G_\bC)$--conjugate to a standard parabolic.  Using the identification \eqref{E:TD} of the holomorphic tangent space we have 
\begin{equation} \label{E:TD2}
  T_{x_o} \check D \ = \ 
  \bigoplus_{\b\in\Delta^+\backslash\langle\Sigma\rangle^+} \fg^\b 
  \ =: \ \fp_n^+ \,.
\end{equation}

The second description of parabolic subalgebras $\fp \subset \fg$ is in terms of the set $\tHom(\rtL,\bZ)$ of \emph{grading elements} where $\rtL = \langle \Delta \rangle \subset \fh^*$ is the root lattice.  If $\Delta^+_s = \{ \a_1,\ldots,\a_r\}$, then there is a dual basis $\{ L_1 , \ldots , L_r\}$ for the set of grading elements given by
\begin{equation} \label{E:Li}
  \a_j(L_i) \ = \ \d_{ij} \,.
\end{equation}
Given a grading element $L$, under the action of $\fh$ on $\fg_\bC$ we have an eigenspace decomposition
\begin{equation} \label{E:g_l}
\renewcommand{\arraystretch}{1.3}
\begin{array}{rcl}
  \fg_\bC & = &  \fg_{-k} \,\op\cdots\op\,\fg_0\,\op\cdots\op\,\fg_k \,,\\
  \hbox{with}\quad
  \fg_\ell & = & \{ \xi \in \fg \ : \ [L,\x] = \ell \x \} \,.
\end{array}
\end{equation}
Because the roots are integral linear combination of the simple roots, we see from \eqref{E:Li} that the eigenvalues of $L$ are integers $\ell \in \bZ$.  We note that 
\begin{bcirclist}
\item
Each $\fg_\ell$ is a direct sum of root spaces (and $\fh$ when $\ell = 0$).  Explicitly,
\begin{equation} \label{E:glrts}
\renewcommand{\arraystretch}{1.4}
\begin{array}{rcl}
  \fg_\ell & = & \displaystyle
  \bigoplus_{\a(L) = \ell} \fg^\a \,\quad\hbox{if } \ell \not=0 \,,\\
  \fg_0 & = & \displaystyle
  \fh \ \op \  \bigoplus_{\a(L) = 0} \fg^\a \,.
\end{array}
\end{equation}
\item
The eigenspaces $\fg_\ell$ and $\fg_{-\ell}$ pair non--degenerately under the Cartan--Killing form.
\item The Jacobi identity yields
\begin{equation} \label{E:Jid1}
  [\fg_\ell,\fg_m] \ \subset \ \fg_{\ell+m} \,.
\end{equation}
\item 
The Cartan subalgebra $\fh \subset \fg_0$ and $\fg_0$ is a reductive subalgebra.   (Indeed, $\fg_0$ is a Levi factor the parabolic subalgebra \eqref{E:pL}.)  By \eqref{E:Jid1}, each $\fg_\ell$ is a $\fg_0$--module.
\item
More generally, any representation $U$ of $\fg_\bC$ admits an $L$--eigenspace decomposition.  Since the weights of $\fg_\bC$ are rational linear combinations of the roots, the eigenvalues are rational $U = \op_{m \in \bQ}\, U_m$.  %Again, the Jacobi identity implies 
%\begin{equation} \nonumber %\label{E:Jid2}
%  \fg_\ell (U_m) \ \subset \ U_{\ell+m} \,.
%\end{equation}
%In particular, every $U_m$ is a $\fg_0$--module.
\end{bcirclist}

\begin{remark} \label{R:grelem}
Grading elements may be defined without reference to a choice of positive roots (equivalently, a choice of Cartan and Borel $\fh \subset \fb$).  In general, a grading element is any semisimple endomorphism of $\fg_\bC$ with integer eigenvalues and the property that the eigenspace decomposition \eqref{E:g_l} satisfies \eqref{E:Jid1}.  If $\fg_\bC$ is semisimple, any such endomorphism is necessarily a grading element \cite[Proposition 3.1.2(1)]{MR2532439}.\footnote{The definition of grading element in \cite{MR2532439} is more restrictive than ours: it imposes the condition that $\fg_1$ generate the Lie subalgebra $\fg_+$.  Nonetheless the proof of \cite[Proposition 3.1.2(1)]{MR2532439} applies to our looser notion.}
\end{remark}

Given $L$ we may define the parabolic subalgebra
\begin{equation} \label{E:pL}
  \fp_L \ = \ \fg_0 \,\op\, \fg_-
\end{equation}
where $\fg_- = \fg_{-1}\,\op\cdots\op\,\fg_{-k}$.  Then \eqref{E:pL} is the Levi decomposition of $\fp_L$.  This associates to every grading element a standard parabolic subalgebra.  Conversely, given a standard parabolic $\fp_\Sigma$, there is a canonically associated grading element
\begin{equation} \label{E:L_p}
  L_\Sigma \ = \ \sum_{\a_i \not\in \Sigma} L_i \,.
\end{equation}
In particular, the relationship between the root and grading element descriptions is: Given $L$ define 
\[
  \Sigma_L \ = \ \{ \a \in \Delta^+_s \ : \ \a(L) = 0 \} \ \subset \ 
  \Delta^+_s \,.
\]

With the grading element notation we note that \eqref{E:TD2} is
\[
  T_{x_o} \check D \ = \ \fg_+ \ = \ \fg_1 \,\op\cdots\op\,\fg_k \,.
\]

%------------------------------------------------------------------------------
\subsection{Generalized flag domains}
%------------------------------------------------------------------------------

A real form $\fg_\bR$ of $\fg_\bC$ is the set of fixed points of a conjugation
\[
  \s : \fg_\bC \to \fg_\bC \quad
  \hbox{satisfying}\quad
  \s(\lambda X) = \bar \lambda (X) \hbox{ for all } 
  \lambda \in \bC \hbox{ and } X \in \fg_\bC \,.
\]
We denote by $G_\bR \subset G_\bC$ the corresponding connected real Lie group.  A \emph{generalized flag domain} is defined to be an open $G_\bR$--orbit $D \subset \check D$ whose isotropy group is compact.  A period domain is an example of a generalized flag domain.  If $D = G_\bR \cdot x_o \subset \check D = G_\bC/P$, then the isotropy group is 
\[
  R \ = \ G_\bR \,\cap\, P \,.
\]

Conversely, given a homogeneous complex manifold $D = G_\bR/R$ with $R \subset G_\bR$ the compact centralizer of a torus, its compact dual is a generalized flag variety $\check D = G_\bC/P$ as above in which $D$ is an open $G_\bR$--orbit.  It is known that $R$ contains a compact maximal torus $\sT$ whose complexified Lie algebra $\ft_\bC$ is a Cartan subalgebra $\fh$ of $\fg$.  The roots $\Delta$ of $\fh$ take purely imaginary values on $\ft$, which gives that
\[
  \overline{\fg^\a} \ = \ \fg^{-\a} \,.
\]
We have the identification of the complexified (real) tangent space
\[
  T_{x_o,\bC} D \ = \ \bigoplus_{\a\in\Delta\backslash\langle\Sigma\rangle} \fg^\a
  \ = \ \fg_+ \,\op\, \fg_- \,,
\]
and with a suitable choice of positive roots we have 
\[
  T^{1,0}_{x_o} D \ = \ \bigoplus_{\a\in\Delta^+\backslash\langle\Sigma\rangle^+} \fg^\a
  \ = \ T_{x_o} D \,.
\]
If we specify $\check D$ (equivalently, $P \subset G_\bC$) by a grading element $L$ as above, then we have 
\begin{equation}\label{E:conjL}
  \overline L \ = \ -L \,.
\end{equation}

%------------------------------------------------------------------------------
\subsection{Polarized Hodge structures}
%------------------------------------------------------------------------------

%------------------------------------------------------------------------------
\subsubsection{Definition} \label{S:dfn_phs}
%------------------------------------------------------------------------------

Let $(V,Q,F^\sb)$ denote a polarized Hodge structure of weight $n$ on a real vector space $V$.  That is, the \emph{polarization} $Q : V \times V \to \bR$ is a nondegenerate bilinear form such that
\[
  Q(u,v) \ = \ (-1)^n Q(v,u) \quad\hbox{for all } \ u,v \in V\,,
\]
and the \emph{Hodge filtration} $F^n \subset F^{n-1} \subset \cdots \subset F^1 \subset F^0 = V_\bC$ is a decreasing filtration on $V_\bC$ with the properties that 
\[
\renewcommand{\arraystretch}{1.3}
\begin{array}{lrcl}
  & F^p \,\op\,\overline{F^{n-p+1}} & \stackrel{\simeq}{\to} & V_\bC \,,\\
  \hbox{(HR1)} \hspace{60pt} &
  Q(F^p, F^{n-p+1}) & = &  0 \,, \hspace{70pt}\\
  \hbox{(HR2)} \hspace{60pt} &
  Q(Cv , \bar v) & > & 0 \quad\hbox{for all } 0 \not= v \in V_\bC \hspace{70pt}
\end{array}
\]
where $C$ denotes the Weil operator.  The equation (HR1) is the \emph{first Hodge--Riemann bilinear relation} (HR1); the inequality (HR2) is the \emph{second Hodge--Riemann bilinear relation}.

A \emph{$Q$--isotropic flag} is any filtration $F^\sb$ of $V_\bC$ satisfying (HR1).   Let
\[
  \mathbf{f} = (f^p = \tdim\,F^p)
\]
denote the \emph{Hodge numbers}.  We may regard the Hodge structure as a point in the $\tAut(V_\bC,Q)$--homogeneous generalized flag variety $\tFlag^Q_\mathbf{f}(V_\bC)$ of $Q$--isotropic filtrations $F^\sb$ of $V_\bC$ with dimension $\tdim\,F^p = f^p$.  The $\tAut(V_\bR,Q)$--orbit of any one of these Hodge structures is the \emph{period domain} of $Q$--polarized Hodge structures with Hodge numbers $\mathbf{f}$.  It is an open subset of $\tFlag^Q_\mathbf{f}(V_\bC)$; the latter is the \emph{compact dual} of the period domain.

The \emph{Hodge decomposition} is
\[
 V_\bC \ = \ \bigoplus_{p+q=n} V^{p,q}  \quad\hbox{with}\quad 
  V^{p,q} \ = \ F^p \,\cap\,\overline{F^q} 
\]
and satisfies
\begin{eqnarray*}
  Q(V^{p,q} , V^{r,s}) & \not= &
   0 \quad\hbox{only if} \quad (p,q) = (s,r) \,,\\
   \left. C\right|_{V^{p,q}} & = & \bi^{p-q} \one \,.
\end{eqnarray*}
We will also refer to 
\[
  \bh = (h^{p,q} = \tdim\,V^{p,q}) 
\]
as the \emph{Hodge numbers} of the Hodge structure.  Note that $f^p = \sum_{q\ge p}h^{q,n-q}$. 

A polarized Hodge structure $(V,Q,F^\sb)$ of weight $n$ is equivalent to a homomorphism
\[
  \varphi : S^1 \to \tAut(V_\bR,Q)
\]
of real algebraic groups with the properties that $\varphi(-1) = (-1)^n\one$ and $Q(\varphi(\bi)v,\bar v) > 0$ for all $0\not= v \in V_\bC$.   The Hodge decomposition is the $\varphi$--eigenspace decomposition
\[
  V^{p,q} \ = \ \{ v \in V_\bC \ : \ \varphi(z) v = z^{p-q} v \,,\ 
  \forall \ z \in S^1 \} \,.
\]
The \emph{Mumford--Tate group} $G$ of the Hodge structure is the $\bQ$--algebraic closure of $\varphi(S^1)$ in $\tAut(V_\bR,Q)$.\footnote{The Mumford--Tate group is the subgroup of $\tAut(V,Q)$ stabilizing the Hodge tensors \cite[(I.B.1)]{MR2918237}.}  The stabilizer of $F^\sb$ in $G_\bR$ is the compact $R = Z_\varphi = \{ g \in G_\bR \ : \ g\varphi(z) = \varphi(z) g \ \forall \ z \in S^1\}$.
 
%------------------------------------------------------------------------------
\subsubsection{Induced PHS on $\tEnd(V,Q)$} \label{S:ind1}
%------------------------------------------------------------------------------

There is an induced Hodge structure on the Lie algebra $\tEnd(V,Q)$ of $\tAut(V,Q)$ defined by 
\[
  F^p\,\tEnd(V_\bC,Q) \ = \ 
  \{ \x \in \tEnd(V_\bC,Q) \ : \ \x(F^q) \subset F^{p+q} \ \forall \ q\} \,.
\]
Equivalently, 
\begin{equation} \nonumber %\label{E:gpq}
  \tEnd(V_\bC,Q)^{p,q} \ = \ 
  \{ \x \in \tEnd(V_\bC,Q) \ : \ \x(V^{r,s}) \subset V^{p+r,q+s} \ 
  \forall \ r,s \} \,.
\end{equation}
Note that $\tEnd(V_\bC,Q)^{p,q} = 0$ if $p+q \not=0$, so that
\begin{equation} \nonumber % \label{E:gwt0}
  \tEnd(V_\bC,Q) \ = \ \bigoplus_{p \in \bZ} \tEnd(V_\bC,Q)^{p,-p}
\end{equation}
is a weight zero Hodge structure.

%------------------------------------------------------------------------------
\subsubsection{PHS in terms of grading elements} \label{S:PHSgrelem1}
%------------------------------------------------------------------------------

We may view grading elements as ``infinitesimal Hodge structures'' as follows.\footnote{``Infinitesimal'' because, appropriately rescaled, $\varphi'(1)$ is a grading element, and conversely every grading element may be realized as $\varphi'(1)$, \cf\cite{schubVHS}.}  Note that the induced Hodge structure on $\tEnd(V,Q)$ satisfies
\[
  \left[ \tEnd(V_\bC,Q)^{p,-p} \,,\, \tEnd(V_\bC,Q)^{q,-q} \right] 
  \ \subset \ \tEnd(V_\bC,Q)^{p+q,-p-q} \,.
\]
So, if we define $L$ to be the semisimple endomorphism of $\tEnd(V_\bC,Q)$ that acts on $\tEnd(V_\bC,Q)^{p,-p}$ by the eigenvalue $-p$, then the discussion of Remark \ref{R:grelem} implies that $L \in \tEnd(V_\bC,Q)$ is a grading element.   

The grading element $L$ also induces the original Hodge structure on $V$.  In particular, the standard representation $V_\bC$ of $\tAut(V_\bC,Q)$ decomposes into a direct sum $\op_m\, V_m$ of $L$--eigenvalues with rational eigenvalues (Remark \ref{R:grelem}).  This eigenspace decomposition is the Hodge decomposition
\begin{equation}\label{E:HSvES}
  V^{p,q} \ = \ V_{(q-p)/2} \ = \ \{ v \in V_\bC \ : \ L(v) = \half(q-p)v \}\,.
\end{equation}
We say that \emph{the grading element defines a polarized Hodge structure on $V$}. 

%------------------------------------------------------------------------------
\subsection{Mumford--Tate domains} \label{S:mtd}
%------------------------------------------------------------------------------

%------------------------------------------------------------------------------
\subsubsection{Definition} \label{S:dfn_mtd}
%------------------------------------------------------------------------------

A Mumford--Tate domain is a generalized flag domain $D = G_\bR/R$ with additional data arising from a \emph{Hodge representation}
\[
  \rho : G_\bR \to \tAut(V,Q) \,.
\]
Specifically, in addition to $\rho$ we are given a grading element $L \in \tHom(\rtL,\bZ)$ such that 
\begin{bcirclist}
\item 
$\rho_*(L)$ defines a polarized Hodge structure $(V,Q,F^\sb)$;
\item
The Mumford--Tate group of $(V,Q,F^\sb)$ is equal to $\rho(G_\bR)$, and
\item
the isotropy group in $G_\bR$ of $F^\sb$ is equal to $R$.
\end{bcirclist}
By definition the \emph{Mumford--Tate domain $D$} is the set of polarized Hodge structures $\{ (V,Q,\rho(g) F^\sb) \ : \ g \in G_\bR\}$.\footnote{A more precise term would be a \emph{Mumford--Tate domain structure on the generalized flag domain $D$}.}  Note that $D$ is the $G_\bR$--orbit of $F^\sb$ in the period domain containing the polarized Hodge structure $(V,Q,F^\sb)$.  We may think of $D$ as the set of polarized Hodge structures $(V,Q,F^\sb_x)$, with $x \in D$, such that the Mumford--Tate group of each $(V,Q,F^\sb_x)$ is contained in $\rho(G)$, and equality holds for general $x$.

A Mumford--Tate domain structure gives an embedding of the compact dual
\[
  \check D \ \inj \ \tFlag_\mathbf{f}^Q(V_\bC) 
\]
as the $G_\bC$--orbit of any $F^\sb \in D$.  Specifically, $x \in \check D$ gives a flag $F^\sb_x$ that satisfies the first Hodge--Riemann bilinear relation $Q(F_x^p , F_x^{n-p+1}) = 0$.  The second Hodge--Riemann bilinear relation defines the open $G_\bR$--orbit $D \subset \check D$.

\begin{example} \label{eg:pd}
A special case of a Mumford--Tate domain is the period domain of Section \ref{S:dfn_phs}.  In this case $G_\bR = \tAut(V_\bR,Q)$ is $\tSO(2a,b)$ for even weight, and $\tSp(2g,\bR)$ for odd weight. 
\end{example}

In the case when $V$ is an irreducible $G$--module another way of think of a Mumford--Tate domain is that it is given by a pair $(\varphi,\chi)$ consisting of a co-character $\varphi$ and a character $\chi$ of $\sT \subset G_\bR$.  Specifically, for $S^1 = \bR/2\pi\bi\bZ$ and 
\[ 
  \varphi : S^1 \to \sT
\]
the isotropy subgroup $R = Z_{G_\bR}(\varphi(S^1))$ is the centralizer in $G_\bR$ of the circle $\varphi(S^1)$.  The $G_\bR$--invariant complex structure on $D = G_\bR/R$ is given by 
$$
  \tAd \circ \varphi : S^1 \to \fg_\bR/\fr \ \simeq \ T_{x_o,\bR}G_\bR/R \,.
$$
The character $\chi$ is the highest weight of the representation $\rho : G \to \tAut(V,Q)$.  There are conditions, not spelled out here, on the pair $(\varphi,\chi)$.  See \cite{MR2918237} for details.

%------------------------------------------------------------------------------
\subsubsection{Induced PHS on $\fg$} \label{S:ind2}
%------------------------------------------------------------------------------

From the description of Section \ref{S:dfn_mtd} it is clear that a given homogeneous complex manifold $D$, corresponding to $\varphi$ above, may be realized as a  Mumford--Tate domain in multiple ways, corresponding to the $\chi$'s above (see Section \ref{S:relD}).  For this work a particularly important pair of such realizations is the following:  Given a generalized flag domain $D$ realized as a Mumford--Tate domain for polarized Hodge structure $(V,Q,F^\sb)$, another realization is as induced polarized Hodge structures on $\fg \subset \tEnd(V,Q)$. Generalizing Section \ref{S:ind1} from $\tAut(V,Q)$ to the more general Mumford--Tate groups $G$, these are defined by 
\[
  F^p_\fg \ = \ 
  \{ \x \in \fg_\bC \ : \ \x(F^q) \subset F^{p+q} \ \forall \ q\} \,.
\]
Equivalently, 
\begin{equation} \nonumber %\label{E:gpq}
  \fg^{p,q} \ = \ 
  \{ \x \in \fg_\bC \ : \ \x(V^{r,s}) \subset V^{p+r,q+s} \ \forall \ r,s \} \,.
\end{equation}
Note that $\fg^{p,q} = 0$ if $p+q \not=0$, so that
\begin{equation} \label{E:gwt0}
  \fg_\bC \ = \ 
  \fg^{-k,k} \,\op\cdots\op\,\fg^{0,0}\,\op\cdots\op\,\fg^{k,-k}
\end{equation}
is a weight zero Hodge structure.

The polarization $Q$ on $V$ induces a polarization $Q_\fg$ on $\fg$.  The latter is invariant under $G$.  Therefore, if $\fg$ is simple, then $-Q_\fg$ is necessarily a positive multiple of the Killing form.  Unless otherwise stated, 
\begin{center}
\emph{$-Q_\fg$ will denote the Killing form throughout.}
\end{center}

Notice that $F^0_\fg$ is the Lie algebra of the stabilizer $P \subset G_\bC$ of \emph{both} the Hodge structure $F^\sb$ on $V$ and the induced Hodge structure $F^\sb_\fg$ on $\fg$.  In particular, 
\begin{quote}
\emph{the $G_\bC$--orbits of $F^\sb \in \tFlag^Q_\mathbf{f}(V_\bC)$ and $F^\sb_\fg \in \tFlag^{Q_\fg}_{\mathbf{f}_\fg}(\fg_\bC)$ both realize the generalized flag variety $\check D = G_\bC/P$ as a projective variety. 
}
\end{quote}
Moreover, the two infinitesimal period relations agree under this identification, \cf\cite{MR2918237}.  Likewise, if 
$$
  R \ = \ G_\bR \,\cap\, P \,,
$$
then 
\begin{quote}
\emph{the $G_\bR$--orbits of $F^\sb \in \tFlag^Q_\mathbf{f}(V_\bC)$ and $F^\sb_\fg \in \tFlag^{Q_\fg}_{\mathbf{f}_\fg}(\fg_\bC)$ both realize the homogeneous manifold $D = G_\bR/R$ as a Mumford--Tate domain.
}
\end{quote}
By slight abuse of terminology we refer to the $G_\bR$--orbit of $F^\sb_\fg$ as the \emph{adjoint Mumford--Tate domain} $D_\fg$ associated to $D$, where the latter is viewed as the Mumford--Tate domain for the Hodge structure on $V$.  The reason for doing this is that coming from algebraic geometry one thinks of $(V,Q,F^\sb)$ as arising from $H^n(X,Q)_\mathrm{prim}$ where $X$ is a smooth projective variety.  However, in order to study (i) the geometry of the $G_\bR$--orbits in $\check D$ and (ii) Lie--theoretic aspects of the Hodge structure, it is necessary to work with the polarized Hodge structures $(\fg,Q_\fg,F^\sb_\fg)$.

%------------------------------------------------------------------------------
\subsubsection{PHS in terms of grading elements} \label{S:PHSgrelem2}
%------------------------------------------------------------------------------

In analogy with Section \ref{S:PHSgrelem1}, given a Hodge structure on $V$ with Mumford--Tate group $G$, there is a canonical choice of grading element.  To be precise, given the induced Hodge structure \eqref{E:gwt0} on $\fg$, define $L$ by 
\begin{equation} \label{E:Lphs}
  \left.L\right|_{\fg^{-p,p}} \ = \ p \one \,.
\end{equation}
Since $L$ is a derivation and $\fg_\bC$ is semisimple, $L$ is necessarily an element of $\fg_\bC$.  Moreover, since $L$ is semisimple, it is necessarily contained in a Cartan subalgebra.  And since the eigenvalues of $L$ on $\fg_\bC$ are integers, $L$ is necessarily a grading element (Remark \ref{R:grelem}).

Conversely, given a complex semisimple Lie algebra $\fg_\bC$ and a grading element $L \in \tHom(\rtL,\bZ)$, there is a canonical choice of real form $\fg_\bR$ (which we may take to be defined over $\bZ$) with the property that the $L$--eigenspace decomposition \eqref{E:g_l} of $\fg_\bC$ defines a weight zero Hodge structure \eqref{E:gwt0} by the assignment 
\begin{equation} \label{E:gp-pVgp}
  \fg^{-p,p} \ = \ \fg_p \,,
\end{equation}
\cf\cite[Proposition 2.36]{schubVHS}.  This Hodge structure on $\fg$ is related to the initial Hodge structure on $V$ by the grading element: the subspaces $V^{p,q}$ are also $L$--eigenspaces; that is, \eqref{E:HSvES} holds.  Observe, that while the $L$--eigenvalues on $\fg$ are integers, on $V$ they lie in $\half\bZ$.

The Hodge structure \eqref{E:gp-pVgp} on $\fg$ is polarized by $Q_\fg$; equivalently, if
\[
  \fk_\bC \ = \ \bigoplus_{\ell} \,\fg_{2\ell} \tand
  \fk^\perp_\bC \ = \ \bigoplus_\ell\, \fg_{2\ell+1} \,,
\]
then $\fk_\bR = \fg_\bR \cap \fk_\bC$ and $\fk^\perp_\bR = \fg_\bR \cap \fk^\perp_\bC$ define a \emph{Cartan decomposition} 
\[
  \fg_\bR \ = \ \fk_\bR \ \op \ \fk^\perp_\bR \,.
\]
We see from \eqref{E:glrts} and \eqref{E:gp-pVgp} that each $\fg^{p,-p}$ is a direct sum of root spaces (and $\fh$ if $p=0$).  Define \emph{compact} and \emph{noncompact roots} by 
\begin{eqnarray*}
  \Delta_\tcpt & = & \{ \a \in \Delta \ : \ \fg^\a \subset \fk_\bC \}
  \ = \ \{ \a\in \Delta \ : \ \a(L) \hbox{ is even} \} \,,\\
  \Delta_\ncpt & = & \{ \a \in \Delta \ : \ \fg^\a \subset \fk_\bC^\perp \}
  \ = \ \{ \a\in \Delta \ : \ \a(L) \hbox{ is odd} \} \,.
\end{eqnarray*}
Note that
\[
  \Delta \ = \ \Delta_\tcpt \,\cup\,\Delta_\ncpt \,.
\]

%------------------------------------------------------------------------------
\subsubsection{Realizations of $D$} \label{S:relD}
%------------------------------------------------------------------------------

In Section \ref{S:PHSgrelem2} we observed that both the Hodge structure on $V$ and the induced Hodge structure on $\fg$ are given by a common grading element $L$.  This fact may be used to deduce that the two Mumford--Tate domains $D$ and $D_\fg$ parameterizing Hodge structures of these types are isomorphic as $G_\bR$--homogeneous complex submanifolds $G_\bR/R \subset G_\bC/P$, \cf\cite{MR2918237}.  This is one example of a general method to realize $G_\bR/R$ as a Mumford--Tate domain, which we now outline.

Given a generalized flag variety $G_\bC/P$ there are canonically defined Hodge structures that realize the variety as a compact dual.  Let $L = L_\fp$ be the grading element \eqref{E:L_p} associated with $\fp$.  Let $\wtL \subset \fh^*$ be the weight lattice of $\fg_\bC$, and let $\{\w_1,\ldots,\w_r\}$ be the \emph{fundamental weights} with respect to the simple roots $\{\a_1,\ldots,\a_r\}$.  Given any dominant integral weight $\lambda \in \wtL$, let $U^\lambda$ denote the corresponding irreducible representation of $\fg_\bC$ with highest weight $\lambda$.  If $\lambda$ is a weight of $G_\bC$,\footnote{This will always be the case if $G_\bC$ is simply connected.} then the parabolic $P$ is the stabilizer of the highest weight line in $U^\lambda$ if and only if 
\begin{equation} \label{E:lambda}
  \lambda = \sum_{\a_i \not\in \Sigma} \lambda^i \w_i
  \quad\hbox{with}\quad 0 < \lambda^i \in \bZ \,.
\end{equation}
In this case, the $G_\bC$--orbit of the highest weight line is a homogeneous embedding $G_\bC/P \inj \bP U^\lambda$ that realizes the homogeneous complex manifold $G_\bC/P$ as a homogeneous projective variety.

Let $\fg_\bR$ be the real form determined by $L$ (Section \ref{S:PHSgrelem2}).  Given an irreducible representation $V_\bR$ of $\fg_\bR$ there exists an irreducible representation $U$ of $\fg_\bC$ such that one of the following holds:
\begin{bcirclist}
\item
$V_\bC = U$, in which case $U$ is \emph{real};
\item
$V_\bC = U \op U^*$ and $U \simeq U^*$, in which case $U$ is \emph{quaternionic};
\item
$V_\bC = U \op U^*$ and $U \not\simeq U^*$, in which case $U$ is \emph{complex}.
\end{bcirclist}
In the case that $\lambda$ is the highest weight of $U$, the representation $V_\bR$ admits a polarized Hodge structure $(F^\sb,Q)$ with Mumford--Tate group $G$ if and only if 
\begin{equation} \label{E:Llambda}
  L(\lambda) \ \in \ \half \bZ \,,
\end{equation}
\cf\cite{MR2918237}.  (A priori, we have only $L(\lambda) \in \bQ$.)  In this case, the Hodge structure $(V,Q,F^\sb)$ is given by the $L$--eigenspace decomposition \eqref{E:HSvES}.  In particular, 
\begin{quote}
\emph{$G_\bC/P$ is realized as the compact dual for any of these polarized Hodge structures.}
\end{quote}
From this perspective, a very natural realization is given by any $\lambda$ that minimizes the coefficients $\lambda^i$ of \eqref{E:lambda} subject to the constraint \eqref{E:Llambda}.  In many cases it is possible to take $\lambda^i=1$;\footnote{The issue is the following: In the case that $\fg_\bC$ is simple, the weights of $\fg_\bC$ lie in the $\tfrac{1}{d}\bZ$--span of the simple roots, where $1 \le d \in \bZ$ is the determinant of the Cartan matrix.  So, when $d \in \{1,2\}$ \eqref{E:Llambda} will hold with $\lambda^i=1$, $\a_i \not\in\Sigma$.  We have $d \in \{1,2\}$ when $\fg_\bC$ is one of $\fso_{2r+1}\bC$, $\fsp_{2r}\bC$, $\fe_7$, $\fe_8$, $\ff_4$ or $\fg_2$.  In the case that $\fg_\bC = \fsl_n\bC$, the determinant is $n+1$ and we will be able to satisfy \eqref{E:Llambda} with values $\lambda^i \in \{1,\ldots,n+1\}$; in the case that $\fg_\bC = \fso_{2r}\bC$, we have $d = 4$ and will be able to satisfy \eqref{E:Llambda} with $\lambda^i \in \{ 1,2\}$; in the case that $\fg_\bC = \fe_6$, we have $d = 3$ and will be able to satisfy \eqref{E:Llambda} with $\lambda^i \in \{1,2,3\}$.} this corresponds to the minimal homogeneous embedding $G/P \inj \bP U^\lambda$ of $G/P$ as a rational homogeneous variety.

Likewise,
\begin{quote}
\emph{the $G_\bR$--orbit of any of these Hodge filtrations $F^\sb$ realizes $G_\bR/R$ as a Mumford--Tate domain.}
\end{quote}

\begin{remark}
As the highest weight of the adjoint representation, the highest root $\tilde\a$ is a dominant integral weight.  By definition the grading element is integer--valued on $\tilde\a$.  So \eqref{E:Llambda} holds with $\lambda=\tilde\a$.  Whence, the construction above yields a polarized Hodge structure $(\fg , \tilde Q_\fg , \tilde F^\sb_\fg)$.  This agrees with the polarized Hodge structure $(\fg,Q_\fg,F^\sb_\fg)$ induced (as in Section \ref{S:ind2}) from any of the $(V,Q,F^\sb)$ constructed in this section.
\end{remark}

%------------------------------------------------------------------------------
\subsection{Period mappings and nilpotent orbits} \label{S:pmno}
%------------------------------------------------------------------------------

We shall only consider period mappings corresponding to a one--parameter family of degenerating polarized Hodge structures.  Such is given by a Mumford--Tate domain $D$, a unipotent monodromy transformation $T \in G$ and a locally liftable holomorphic mapping
\begin{equation} \label{E:Phi1}
  \Phi : \Delta^* \to \Gamma_T \backslash D \,.
\end{equation}
which satisfies the infinitesimal period relation.  Here $\Gamma_T = \{ T^k \ : \ k \in \bZ \}$.

Denoting by $\bH = \{ z \in \bC \ : \ \tIm(z) > 0 \}$ the upper--half plane with covering map $\bH \to \Delta^*$ given by 
\[
  t \ = \ e^{2\pi\bi z} \,,
\]
we may lift \eqref{E:Phi1} to give 
\begin{eqnarray*}
  & \tilde\Phi : \bH \to D \,, & \\
  & \tilde\Phi(z+1) \ = \ T \cdot \tilde\Phi(z) \,. &
\end{eqnarray*}
Setting $N = \log(T) \in \fg^\tnilp$, we may then ``unwind'' $\Phi$ to
\[
  \tilde\Psi : \bH \to \check D
\]
by defining
\[
  \tilde\Psi(z) \ = \ e^{-zN} \cdot \tilde\Phi(z) \,.
\]
Then $\tilde\Psi(z+1) = \tilde\Psi(z)$ so that there is an induced map 
\[
  \Psi : \Delta^* \to \check D \,.
\]
A basic result is that $\Psi$ extends across the origin $t=0$.  Then setting $\ell(t) = \log(t)/2\pi\bi$, the original period mapping is well approximated by the nilpotent orbit
\begin{equation} \label{E:Psi(0)}
  t \ \mapsto \ e^{\ell(t)N} \cdot \Psi(0) \,,
\end{equation}
see \cite{MR0382272}.  Implicit here is the statement that $e^{\ell(t)N}\cdot \Psi(0) \in D$ for $0 < |t| < \e$.  We shall further explain this below.

We shall sometimes write $\Phi(t) = F^\sb_t$ for the multi--valued filtration on $V_\bC$.  The lift $\tilde\Phi$ to $\bH \to D$ will be denoted by 
$$
  z \ \mapsto \ F^\sb_z
$$
where $F_{z+1}^\sb = T \cdot F_z^\sb$.

Because of the strong approximation of \eqref{E:Phi1} by a nilpotent orbit, we shall replace $\Phi$ by the nilpotent orbit \eqref{E:Psi(0)}.  For this we set
\[
  \Psi(0) \ = \ F^\sb_\tlim \,.
\]
Note that $F^\sb_\tlim$ is defined only up to the action of $\Gamma_T$ and a choice of coordinate $t$.  Rescaling $t$ by $t \mapsto e^{2\pi\bi \lambda}t$ induces the change
\[
  F^\sb_\tlim \ \to \ e^{\lambda N}\cdot F^\sb_\tlim \,.
\]
Thus what is well--defined is a map
\[
  \{ \hbox{period mappings \eqref{E:Phi1}} \} \,\times\, T_0^*\Delta 
  \ \to \ \hbox{nilpotent orbits.}
\]

The conditions that $\Phi$ define a period mapping translate into
\begin{equation} \label{E:Flim}
\renewcommand{\arraystretch}{1.3}
\begin{array}{rcl}
  e^{z N} \cdot F^\sb_\tlim & \in & D 
  \quad\hbox{for } \ \tIm(z) \gg 0 \,,\\
  N \cdot F^p_\tlim & \subset & F^{p-1}_\tlim
  \qquad\hbox{(infinitesimal period relation).}
\end{array}
\end{equation}

\begin{definition}
A \emph{nilpotent orbit} is given by $(F^\sb , N)$ where $F^\sb \in \check D$, $N \in \fg^\tnilp_\bR$ and where the conditions \eqref{E:Flim} are satisfied with $F^\sb$ in place of $F^\sb_\tlim$.
\end{definition}

Two nilpotent orbits $(F^\sb, N)$ and $({}'F^\sb,N)$ are \emph{equivalent} if 
\[
  {}'F^\sb \ = \ e^{\lambda N} F^\sb
\]
for some $\lambda \in \bC$.  We set 
\begin{bcirclist}
\item
$\tilde B(N) \ = \ \{\hbox{nilpotent orbits } (F^\sb,N)\}$,
\item
and let $B(N) = e^{\bC N} \backslash \tilde B(N)$ denote the set of equivalence classes of nilpotent orbits.
\end{bcirclist}

We next set 
\[
  D_N \ = \ D \,\cup\,B(N)
\]
and observe that the action of $\Gamma_T$ extends naturally to $D_N$.  In \cite{MR2465224} the structure of a log--analytic variety with slits is defined on $D_N$, and a basic result is that the period mapping \eqref{E:Phi1} extends to 
\begin{equation} \label{E:Phi3}
  \Phi_e : \Delta \to \Gamma_T \backslash D_N
\end{equation}
where the origin is mapped to the equivalence class of $(F^\sb_\tlim,N)$.  We shall refer to \eqref{E:Phi3} as the \emph{extended period mapping}.

\begin{definition}
The mapping
\[
  \{ \hbox{period mappings \eqref{E:Phi1}} \} \ \to \ \Phi_e(0) \ \in \ 
  \Gamma_T \backslash D_N
\]
will be called the \emph{limit period mapping}.
\end{definition}

%------------------------------------------------------------------------------
%\subsection{Nilpotent Orbit Theorem}
%------------------------------------------------------------------------------

%------------------------------------------------------------------------------
\subsection{Nilpotent orbits and limiting mixed Hodge structures} \label{S:NO+MHS}
%------------------------------------------------------------------------------

Let $D$ be a Mumford--Tate domain parameterizing weight $n$, $Q$--polarized Hodge structures on $V$ whose generic Mumford--Tate group is $G$.  Associated to a nilpotent orbit $(F^\sb,N)$ there is a special type of mixed Hodge structure $(V,W_\sb(N),F^\sb)$ called a \emph{limiting mixed Hodge structure} (limiting mixed Hodge structures).  There will then be a bijection of sets
\[
  \left\{ \hbox{nilpotent orbits} \right\} 
  \quad \stackrel{\eqref{E:CKS}}{\longleftrightarrow} \quad
  \left\{ \hbox{limiting mixed Hodge structures} \right\} ,
\]
which will pass to the quotient by taking equivalence classes.

Since $N$ is nilpotent, there exists $0 \le m \le n$ such that $N^m\not=0$ and $N^{m+1} = 0$.  Then the \emph{weight filtration}
\[
  W_{-m}(N) \,\subset\cdots\subset\,W_0(N)\,\subset\cdots\subset\,W_m(N) \ = \ V
\]
is the unique filtration that satisfies
\begin{eqnarray*}
  N : W_k(N) & \longrightarrow &  W_{k-2}(N) \,,\\
  N^k : \tGr_k^{W\sb(N)} & \stackrel{\simeq}{\longrightarrow} & \tGr^{W_\sb(N)}_{-k} \,
  \quad k \ge 0 \,.
\end{eqnarray*}
It is always possible to complete $N$ to a \emph{$\fsl_2$--triple}
\begin{equation} \label{E:sl2trip}
  \{ N , Y , N^+ \} \ \subset \ \fg
\end{equation}
where
\begin{eqnarray*}
  {[Y , N ]} & = & -2 N \,,\\
  {[Y,N^+]} & = & 2 N^+ \,,\\
   {[N^+ , N]} & = & Y \,.
\end{eqnarray*}
The span $\fa$ of \eqref{E:sl2trip} is a three dimensional semisimple subalgebra (TDS) of $\fg$ that is isomorphic to $\fsl_2$.  Denoting by $V_\ell = \{ v \in V \ : \ Y v = \ell v \}$ the $\ell$--weight space for the semisimple action of $Y$, we have 
\begin{equation} \label{E:Wk(N)}
  W_k(N) \ = \ \bigoplus_{\ell \le k} V_\ell 
  \tand
  \tGr^{W_\sb(N)}_k \ \simeq \ V_k \,.
\end{equation}

The \emph{primitive spaces} are defined, for $k \ge 0$, by
\[
  \tGr^{W_\sb(N)}_{k,\tprim} \ = \ \tker \left\{ 
    N^{k+1} : \tGr^{W_\sb(N)}_{k} \to \tGr^{W_\sb(N)}_{-k-2}
  \right\} \,.
\]
Decomposing $V$ under the action of the TDS, the primitive spaces are the highest weight spaces.  There are nondegenerate bilinear forms, of parity $k$, 
\[
  Q_k : \tGr^{W_\sb(N)}_{k} \,\times\,\tGr^{W_\sb(N)}_{k} \ \to \ \bR
\]
defined by 
\[
  Q_k(v,w) \ = \ \epsilon_k Q(v , N^k w)
\]
where $\epsilon_k = \pm1$.

The basic result \cite{MR840721} is 
\begin{equation} \label{E:CKS}
\begin{array}{c}
\hbox{\emph{$(F^\sb,N)$ is a nilpotent orbit if and only if}} \\
\hbox{\emph{$(V,W_\sb(N),F^\sb)$ is a polarized mixed Hodge structure.}}
\end{array}
\end{equation}
The filtration $F^\sb$ induces a Hodge structure of weight $k$ on $\tGr^{W_\sb(N)}_k$ and $N$ is of Hodge type $(-1,-1)$. The polarization condition means that the summand $\tGr^{W_\sb(N)}_{k,\tprim}$ of $\tGr^{W_\sb(N)}_k$ is polarized by the form $Q_k$.  We shall generally suppress mention of the polarization conditions, which we always take to be understood.

The equivalence relation on limiting mixed Hodge structures is induced by rescaling $F^\sb \mapsto e^{\lambda N} F^\sb$.  The Hodge structures on $\tGr^{W_\sb(N)}_k$ are unchanged, but some of the extension data in the mixed Hodge structure will be altered.

%------------------------------------------------------------------------------
\subsection{Reduced limit period mapping} \label{S:rlpm}
%------------------------------------------------------------------------------

This section is summarizes material developed in \cite[Appendix to Lecture 10]{MR3115136} and \cite{KP2013}; the interested reader should consult those references for additional detail and proofs.

Given a period mapping \eqref{E:Phi1} with extension \eqref{E:Phi3} the limiting mixed Hodge structure given by the point 
\[
  \Phi_e(0) \ \in \ \Gamma_T\backslash B(N)
\]
represents, in a precise sense, the maximal amount of information in the limit of a degenerating family of polarized Hodge structures.  For some purposes the other extreme of describing a minimal amount of information in the limit is useful.  In that direction we will consider two notions of a \emph{reduced limit period mapping}.  For the first we lift \eqref{E:Phi1} to 
$$
  \tilde \Phi : \bH \to D \,.
$$

\begin{definition}[First notion] \label{d:rlpm1}
The \emph{reduced limit period mapping} is defined by 
\[
  \Phi \ \mapsto \ \lim_{\tIm(z) \to \infty} \tilde \Phi(z) \ \in \ \del D \,.
\]
\end{definition}

\noindent The map was introduced in \cite{KP2013} under the term \emph{\naive \ limit}.  It has been further discussed in \cite[Appendix to Lecture 10]{MR3115136} and \cite{KR1}.  If we think of $\tilde\Phi(z) = F^\sb_z$ as a filtration on $V_\bC$, then we will set 
\[
  \lim_{\tIm(z)\to\infty} F^\sb_z \ = \ F^\sb_\infty \,.
\]
The reduced limit period mapping has the properties:
\begin{a_list}
\item
It is the same for $\Phi$ and for the approximating nilpotent orbit \eqref{E:Psi(0)}.  So, without loss of generality, we shall assume that $\tilde\Phi(z) = e^{zN}\cdot F^\sb$.
\item 
It is independent of the lifting $\tilde\Phi$.  In fact, $F^\sb_\infty \in \del D$ is a fixed point of $T$ and the differential
\[
  T_* : T_{F^\sb_\infty}\check D \to T_{F^\sb_\infty}\check D
\]
is the identity.  Equivalently, the vector field on $\check D = G_\bC/P$ defined by $N \in \fg$ vanishes to second order at $F^\sb_\infty$.
\item
The mapping
\begin{subequations} \label{SE:Phi_infty}
\begin{equation}
  \Phi_\infty : B(N) \to \del D
\end{equation}
defined on $\tilde B(N)$ by 
\begin{equation}
  \Phi_\infty(F^\sb,N) \ = \ \lim_{\tIm(z)\to\infty} \tilde\Phi(z) \ = \ F^\sb_\infty
\end{equation}
\end{subequations}
is well--defined on $B(N)$, and it is $Z(N)_\bR$--equivariant.  The image of $B(N)$ lies in a $G_\bR$--orbit in $\del D$
\[
  \Phi_\infty\left( B(N) \right) \ \subset \ 
  \cO_{F^\sb_\infty} \ = \ G_\bR \cdot F^\sb_\infty \,.
\]
\end{a_list}

\begin{definition}[Second notion] \label{d:rlpm2}
Given a Mumford--Tate domain $D \subset \check D$ with boundary component $B(N)$, the  \emph{reduced limit period mapping} is \eqref{SE:Phi_infty}.
\end{definition}

\noindent Note that the first notion (Definition \ref{d:rlpm1}) is the composition $\Phi_\infty\circ\Phi_\mathit{e}(0)$.

\begin{definition}[{\cite{KP2012}}] \label{d:pol}
A $G_\bR$--orbit $\cO \subset \del D$ for a generalized flag domain $D$ is \emph{polarizable relative to $D$} if there is a Mumford--Tate domain structure on $D$ and a nilpotent orbit $(F^\sb,N)$ such that $F^\sb_\infty \in \cO$.  The orbit $\cO$ is \emph{polarizable} if there is a Mumford--Tate domain $D$ with $\cO \subset \del D$ relative to which $\cO$ is polarizable.
\end{definition}

\noindent 
There are examples of generalized flag domains $D$ and $D'$ in a generalized flag variety $\check D$ and a $G_\bR$--orbit $\cO \subset \del D \,\cap\, \del D'$ such that $\cO$ is polarizable relative to $D$, but not relative to $D'$, \cf\cite[Appendix to Lecture 10]{MR3115136}.

Let $\cN  \subset \fg_\bR$ denote the $\tAd(G_\bR)$--orbit of $N$.  Then
$$
  \cO_{F^\sb_\infty} \ = \ \bigcup_{N' \in \cN} \Phi_\infty(B(N')) \,.
$$
In this case we say that \emph{$\cO_{F^\sb_\infty}$ is polarized by $\cN$ relative to $D$}.
% and write 
%$$
%  \cO_{F^\sb_\infty} \ = \ \cO_\cN \,.
%$$
In this sense, the $G_\bR$--orbits in $\check D$ separate into those that have Hodge--theoretic significance, meaning that over $\bR$ every point is realized as a reduced limit for some Mumford--Tate domain structure, and those that don't have Hodge--theoretic significance in this sense.

To explain this a bit more, given a Mumford--Tate domain $D$, for every $F^\sb_x \in \check D$ we may consider the intersection
\[
  V_x^{p,q} \ = \ F^p_x \,\cap\, \overline F{}_x^q \,.
\]
The $x$ for which
\[
  F^p_x \,\op\, \overline{F{}_x^{n-p+1}} \ \stackrel{\simeq}{\longrightarrow} \ V_\bC
\]
for $0 \le p \le n$ give Hodge structures, perhaps with indefinite polarizations meaning that the Hermitian forms in the second Hodge--Riemann bilinear relation are nonsingular but may not be positive definite.  These $\cO_x$ are exactly the open $G_\bR$--orbits in $\check D$.  For the lower--dimensional orbits, the $V_x^{p,q}$ lead to mixed Hodge structures, but without the presence of an $N$ whose weight filtration together with the $F^p_x$ give a polarized limiting mixed Hodge structure these seem relatively uninteresting.

In light of the equivalence \eqref{E:CKS} between nilpotent orbits and limiting mixed Hodge structures, one may ask: \emph{What point of $\del D$ does a limiting mixed Hodge structure map to?}  To answer this we first need two general facts about mixed Hodge structures.  Given a mixed Hodge structure $(V,W_\sb,F^\sb)$ there is the canonical \emph{Deligne splitting}
\begin{eqnarray*}
  V_\bC & = & \bigoplus I^{p,q} \,,\\
  \overline{I^{p,q}} & \equiv & I^{q,p} \quad\hbox{mod}\quad W_{p+q-2}
\end{eqnarray*}
with
\begin{equation} \label{E:delWF}
  W_k \ = \ \bigoplus_{p+q\le k} I^{p,q} \tand
  F^p \ = \ \bigoplus_{q\ge p} I^{q,\sb} \,.
\end{equation}
The mixed Hodge structure is split over $\bR$, or \emph{$\bR$--split}, if 
\[
  \overline{I^{p,q}} \ = \ I^{q,p} \,.
\]
In this case $(V,W_\sb,F^\sb)$ is a direct sum over $\bR$ of pure Hodge structures $\op_{p+q=k} \, I^{p,q}$ of weight $k$.

For the second property, canonically associated to a mixed Hodge structure $(V,W_\sb,F^\sb)$ there is an $\bR$--split mixed Hodge structure $(V,W_\sb,\tilde F^\sb)$ given by 
\[
  \tilde F^\sb \ = \ e^{-2\bi \d} \cdot F^\sb
\]
where $\displaystyle \d \in \oplus_{p,q<0} I^{p,q}_\fg$.  Here, $I^{p,q}_\fg$ is the Deligne splitting of the induced limiting mixed Hodge structure $(\fg,W_{\sb,\fg},F^\sb_\fg)$.

If $(V,W_\sb(N),F^\sb)$ is a limiting mixed Hodge structure, then so is $(V,W_\sb(N),\tilde F^\sb)$, and conversely.  Moreover, we have 
\begin{equation}\label{E:F1}
  F^\sb_\infty \ = \ \tilde F^\sb_\infty \,;
\end{equation}
that is, denoting by $B(N)_\bR$ the equivalence classes containing an $\bR$--split limiting mixed Hodge structure, the reduced limit period mapping factors
\begin{equation} \label{E:F2}
\hbox{
  \setlength{\unitlength}{5pt}
  \begin{picture}(17,12)(0,0)
  \put(0,0){$B(N)_\bR$}
  \put(3,7.5){\vector(0,-1){4}}
  \put(0,9){$B(N)$}
  \put(8,1){\vector(2,1){6}}
  \put(8,9.5){\vector(2,-1){6}}
  \put(15,4.5){$\del D$}
  \put(10,9){$\Phi_\infty$}
  \end{picture}
}
\end{equation}

Next, in terms of the Deligne splitting
$$
  V_\bC \ = \ \bigoplus \tilde I^{p,q}
$$
associated with $(V,W_\sb(N),\tilde F^\sb)$ we have 
\begin{equation} \label{E:F3}
  F^p_\infty \ = \ \bigoplus_{q \le n-p} \tilde I^{\sb,q} \,.
\end{equation}
The picture is this
\begin{center}
\begin{tikzpicture}
  \draw (0,0) circle (50pt);
  \filldraw (0,0) circle (2.5pt);
  \filldraw (1.75,0) circle (2.5pt);
  \filldraw (0,-1.75) circle (2.5pt);
  \draw (-0.2,0.5) node {$F^\sb \in D$};
  \draw (2.8,0) node {$F^\sb_\infty \in \del D$};
  \draw (0.5,-2.2) node {$\tilde F^\sb \in \del D$};
  \draw (0,0) [->>, thick] .. controls (0.7,0.5) and (1.2,0.5) .. (1.65,0.1);
  \draw (0,0) [->, dashed] .. controls (-0.3,-0.7) and (-0.3,-1.2) .. (-0.1,-1.65);
\end{tikzpicture}
\end{center}
Starting with $F^\sb \in D$ such that $(V,W_\sb(N),F^\sb)$ is a limiting mixed Hodge structure, we have for the reduced limit period mapping (the solid arrow emanating from $F^\sb \in D$)
\[
  \lim_{y\to\infty} e^{\bi y N } F^\sb \ = \ F^\sb_\infty \,.
\]
We also have the map $F^\sb \to \tilde F^\sb$ (the dashed arrow), and then
\[
  \lim_{y\to\infty} e^{yN} \tilde F^\sb \ = \ F^\sb_\infty \,.
\]
Thus, $F^\sb_\infty$ is reached from $\tilde F^\sb$ by traveling along the real one--parameter subgroup $\texp(\bR N)$ in a $G_\bR$--orbit in $\del D$.

Because of \eqref{E:F1} and the subsequent factorization \eqref{E:F2}, henceforth, unless mentioned otherwise, we shall adopt the 

\begin{convention}
We shall assume that a limiting mixed Hodge structure is $\bR$--split.
\end{convention}

\noindent Because of this we may drop the tilde in \eqref{E:F3} to have 
\begin{equation} \label{E:F4}
  F^p_\infty \ = \ \bigoplus_{q \le n-p} I^{\sb,q} \,.
\end{equation}

%------------------------------------------------------------------------------
\subsection{Reduced limit period mapping for $(\fg,Q_\fg,F^\sb_\fg)$} 
%------------------------------------------------------------------------------

The limiting mixed Hodge structure $(V,W_\sb(N),F^\sb)$ determines a limiting mixed Hodge structure $(\fg , W_\sb(N)_\fg , F^\sb_\fg)$ by
\begin{eqnarray*}
  F^p_\fg & = & 
  \{ \xi \in \fg_\bC \ : \ \xi(F^q) \subset F^{p+q} \ \forall \ q \} \,,\\
  W_\ell(N)_\fg & = & 
  \{ \xi \in \fg_\bR \ : \ 
  \xi(W_m(N)) \subset W_{m+\ell}(N) \ \forall \ m \} \,.
\end{eqnarray*}  
As above $V_\bC = \op I^{p,q}$ will denote the Deligne splitting of $V_\bC$, and 
\begin{equation}\label{E:Del_g}
  \fg_\bC \ = \ \bigoplus I^{p,q}_\fg
\end{equation}
will denote the Deligne splitting on $\fg_\bC$.  If the initial limiting mixed Hodge structure $(V,W_\sb(N),F^\sb)$ is $\bR$--split, so is the induced limiting mixed Hodge structure $(\fg,W_\sb(N)_\fg,F^\sb_\fg)$ on $\fg$; indeed,
\begin{equation} \label{E:indI}
  I^{p,q}_\fg \ = \ \{ \xi \in \fg_\bC \ : \ 
  \xi(I^{r,s}) \subset I^{p+r,q+s} \ \forall \ r,s \} \,.
\end{equation}
 In particular, \eqref{E:F4} implies
$$
  F^p_{\fg,\infty} \ = \ \bigoplus_{q\le-p} I^{\sb,q}_\fg\,.
$$

\begin{example}[A $G_2$ Mumford--Tate domain] \label{eg:G2}
The exceptional simple Lie group $G_2$ of rank two may be realized as the Mumford--Tate group of a weight 6 Hodge structure with Hodge numbers $\bh = (1,1,1,1,1,1,1) = (1^7)$, \cite[Section 6.1.3]{KP2013}.  The associated Mumford--Tate domain $D$ is an open $G_2(\bR)$--orbit in the flag variety $G_2(\bC)/B$.  Including $D$ (which is polarized by $N=0$) there are four $G_2(\bR)$--orbits that are polarized relative to $D$.   The Deligne splittings $V_\bC = \op I^{p,q}$ (left column) and $\fg_\bC = \op I^{p,q}_\fg$ (right column) for each of these orbits are pictured in Figure \ref{f:G2/B}.  A circled node indicates $i^{p,q} = 2$, and an uncircled node indicates $i^{p,q}=1$.

The boundary $\del D$ contains seven $G_2(\bR)$--orbits, four of them are \emph{not} polarized relative to $D$ \cite[Section 6.1.3]{KP2013}. 
\end{example}
\begin{figure}[!th]
\caption{Deligne splittings for polarized orbits in $G_2(\bC)/B$}
\setlength{\unitlength}{10pt}
\begin{picture}(16,8)(-9,0)
\put(0,0){\vector(0,1){7}} \put(0,0){\vector(1,0){7}}
\multiput(0,6)(1,-1){7}{\circle*{0.3}}
\put(-3,4){$D$}
\end{picture}
\hspace{30pt}
\setlength{\unitlength}{7pt}
\begin{picture}(12,10)(-6,-5)
\put(-6,0){\vector(1,0){12}}
\put(0,-6){\vector(0,1){12}}
\multiput(-5,5)(1,-1){11}{\circle*{0.4}}
\multiput(-1,1)(1,-1){3}{\circle{0.8}}
\end{picture}
\\
\setlength{\unitlength}{10pt}
\begin{picture}(16,8)(-9,0)
\put(0,0){\vector(0,1){7}} \put(0,0){\vector(1,0){7}}
\multiput(0,6)(3,-3){3}{\circle*{0.3}}
\multiput(1,4)(1,1){2}{\circle*{0.3}}
\multiput(4,1)(1,1){2}{\circle*{0.3}}
\put(-9,4){$\tcodim\,\cO=1$}
\end{picture}
\hspace{30pt}
\setlength{\unitlength}{7pt}
\begin{picture}(12,14)(-6,-5)
\put(-6,0){\vector(1,0){12}}
\put(0,-6){\vector(0,1){12}}
\multiput(-3,3)(3,-3){3}{\circle*{0.4}}
\multiput(-5,4)(1,1){2}{\circle*{0.4}}
\multiput(-2,1)(1,1){2}{\circle*{0.4}}
\multiput(1,-2)(1,1){2}{\circle*{0.4}}
\multiput(4,-5)(1,1){2}{\circle*{0.4}}
\multiput(-1,-1)(1,1){3}{\circle*{0.4}}
\put(0,0){\circle{0.8}}
\end{picture}
\\
\setlength{\unitlength}{10pt}
\begin{picture}(16,8)(-9,0)
\put(0,0){\vector(0,1){7}} \put(0,0){\vector(1,0){7}}
\multiput(2,2)(1,1){3}{\circle*{0.3}}
\multiput(0,5)(1,1){2}{\circle*{0.3}}
\multiput(5,0)(1,1){2}{\circle*{0.3}}
\put(-9,4){$\tcodim\,\cO=1$}
\end{picture}
\hspace{30pt}
\setlength{\unitlength}{7pt}
\begin{picture}(12,14)(-6,-5)
\put(-6,0){\vector(1,0){12}}
\put(0,-6){\vector(0,1){12}}
\multiput(-5,5)(10,-10){2}{\circle*{0.4}}
\multiput(-4,1)(1,1){4}{\circle*{0.4}}
\multiput(-1,-1)(1,1){3}{\circle*{0.4}}
\multiput(1,-4)(1,1){4}{\circle*{0.4}}
\put(0,0){\circle{0.8}}
\end{picture}
\\
\setlength{\unitlength}{10pt}
\begin{picture}(16,8)(-9,0)
\put(0,0){\vector(0,1){7}} \put(0,0){\vector(1,0){7}}
\multiput(0,0)(1,1){7}{\circle*{0.3}}
\put(-9,4){Closed orbit $\cO_\mathrm{cl}$}
\end{picture}
\hspace{30pt}
\setlength{\unitlength}{7pt}
\begin{picture}(12,13)(-6,-5)
\put(-6,0){\vector(1,0){12}}
\put(0,-6){\vector(0,1){12}}
\multiput(-5,-5)(1,1){11}{\circle*{0.4}}
\multiput(-1,-1)(1,1){3}{\circle{0.8}}
\end{picture}
\label{f:G2/B}
\end{figure}
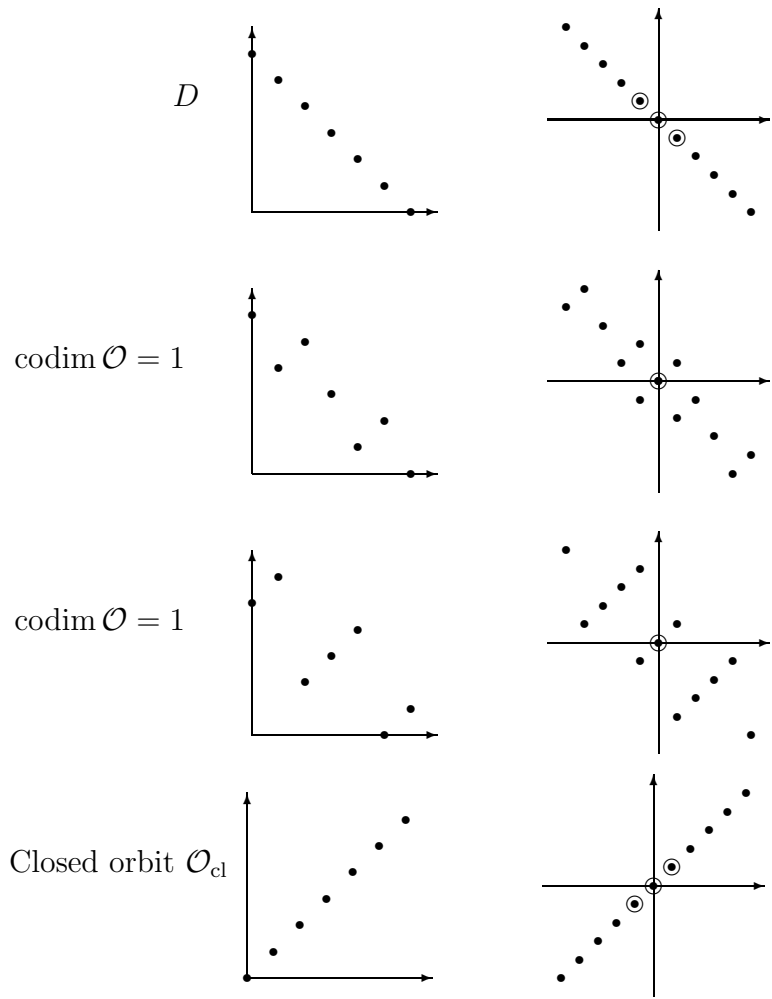

\begin{remark}[Jacobson--Morosov parabolics] \label{R:JMP}
Let $Y$ be the neutral element of an $\fsl_2$--triple \eqref{E:sl2trip} containing the given nilpotent $N$.  In analogy with \eqref{E:Wk(N)} we have 
\[
  W_k(N)_\fg \ = \ \bigoplus_{\ell \le k} \fg_\ell\,,
  \quad\hbox{where}\quad
  \fg_\ell \ = \ \{ \x \in \fg \ : \ [Y,\xi] = \ell \xi \}
\]
is the $\ell$--eigenspace of $Y$.  In particular, 
\[
  W_0(N)_\fg \ = \ \bigoplus_{\ell\ge0} \fg_{-\ell}
\]
is a parabolic subalgebra of $\fg$.  Not every parabolic $\fp \subset \fg$ may be realized as $\fp = W_0(N)_\fg$ for some $N$;\footnote{For example, if $n>2$, then the parabolic $P \subset \tSL_n\bC$ stabilizing a line in $\bC^n$ is not a Jacobson--Morosov parabolic, \cf\cite{MR1251060}.} those that can are \emph{Jacobson--Morosov parabolics}.  When the eigenvalues of $Y$ are even, that is $\fg_{2k+1} = 0$ for all $k \in \bZ$, the we say that $W_0(N)$ is an \emph{even Jacobson--Morosov parabolic}.  A Borel subalgebra $\fb \subset \fg$ may always be realized as an even Jacobson--Morosov parabolic by taking $N$ to be a principal nilpotent \cite{MR1251060}.

Recall that, up to the action of $\tAd(G_\bC)$, the parabolic subalgebras $\fp \subset \fg$ are indexed by (possibly empty) subsets $\Sigma \subset \Delta^+_\mathit{s}$ of the simple roots (Section \ref{S:GFV}).  The subset $\Sigma$ indexing a Jacobson--Morosov parabolic $W_0(N)$ may be determined as follows.  The fact that the eigenvalues of $Y$ are integers implies that $Y$ is a grading element.  (The Jacobson--Morosov parabolic $W_0(N)$ is even if and only if $\a_i(Y)$ is even for all simple roots.)  We may choose $\fh \subset \fb \subset W_0(N)$ so that $\a_i(Y) \ge 0$ for all simple roots $\a_i \in \Delta^+_\mathit{s}$.  (Alternatively, `conjugate' $Y$ so that it lies in a chosen positive Weyl chamber.)  Then 
\[
  W_0(N)_\fg \ = \ \fp_\Sigma \,, \quad\hbox{where}\quad
  \Sigma \ = \ \{ \a_i \ : \ \a_i(Y) = 0 \} \,.
\]
In fact, given the normalization $\a_i(Y) \ge 0$, we have 
\[
  \a_i(Y) \ \in \ \{0,1,2\} \,,
\] 
and the $\tAd(G_\bC)$--orbits of nilpotent $N \in \fg_\bC$ are indexed by the \emph{characteristic vectors} $(\a_1(Y) , \ldots , \a_r(Y))$, \cf \cite{MR1251060, MR0047629_trans, MR0114875}.   
\end{remark}

%------------------------------------------------------------------------------
\section{Extremal degenerations of polarized Hodge structures}
%------------------------------------------------------------------------------

%------------------------------------------------------------------------------
\subsection{Precise definition of extremal degenerations} \label{S:dfn}
%------------------------------------------------------------------------------

\begin{definition} 
Let $(V,Q,F^\sb)$ be a polarized Hodge structure.  The following data constitute a \emph{degeneration of the polarized Hodge structure $(V,Q,F^\sb)$}:
\begin{i_list}
\item
A Mumford--Tate domain $D$ such that $F^\sb = F^\sb_x$ for some $x \in D$.
\item 
A period mapping 
\begin{equation}\label{E:I4'}
  \Phi : \Delta^* \to \Gamma_T\backslash D
\end{equation}
such that for the lift $\tilde\Phi : \bH \to D$ we have 
\[
  \tilde\Phi(z) \ = \ x\quad\hbox{for some } \ z \in \bH \,.
\]
\end{i_list}
\end{definition}

For $t = e^{2\pi\bi z} \in \Delta^*$, we think of $\Phi(t) = F^\sb_t \in \Gamma_T\backslash D$ as defining a one--parameter family of $\Gamma_T$--equivalence classes of polarized Hodge structures whose Mumford--Tate groups are contained in $G \subset \tAut(V,Q)$ and which tend to a ``singular point" as $t \to 0$.  Because of the constraints on the Mumford--Tate groups, this is a more refined notion than just a family of equivalence classes of polarized Hodge structures over the punctured disc.

In this paper two types of limits associated to \eqref{E:I4'} have been discussed.  One is the equivalence class of limiting mixed Hodge structures $(V,W_\sb(N),F^\sb_\tlim)$ viewed as the image of the origin $\Phi_e(0)$ under the extended period map
\[
  \Phi_e : \Delta \to \Gamma_T\backslash \left( D \cup B(N) \right)
\]
(Section \ref{S:pmno}).  The other is the image
\[
  F^\sb_\infty \ \in \ \del D
\]
of the reduced limit period mapping (Section \ref{S:rlpm})
\[
  \Phi_\infty : B(N) \to \del D
\]
applied to $\Phi_e(0) \in B(N)$.  As noted above, $\Phi_\infty$ is invariant under $T$ so that $F^\infty \in \del D$ is well--defined.

The orbits $\cO_x = G_\bR \cdot x$ of the action of $G_\bR$ on the generalized flag variety form a partially ordered set by the relation ``contained in the closure of''
\[
  \cO' \ \prec \ \cO \quad\hbox{if}\quad \cO' \subset \overline \cO \,.
\]
Aside from the open orbits, at one extreme are the real codimension--one orbits; at the other extreme is the unique closed orbit $\cO_\mathrm{cl}$.  In first approximation, an extremal degeneration of a polarized Hodge structure is a degeneration whose reduced limit period $F^\sb_\infty$ lies in one of these two extremes.  However, since there may be no degeneration with $F^\sb_\infty$ in the closed orbit, a refinement of this notion is required.

\begin{definition}
We shall say that a $G_\bR$--orbit $\cO \subset \del D$ is \emph{maximal relative to the Mumford--Tate structure on the generalized flag domain $D$}  if it is polarizable relative to $D$ and if there exists no other orbit $\cO' \precneqq \cO$ that is polarizable relative to $D$.
\end{definition}

\begin{definition} \label{d:extremal}
A degeneration of a polarized Hodge structure is \emph{extremal} if the reduced limit period lies in either a codimension--one orbit (in which case the degeneration is \emph{minimal}), or in an orbit that is maximal relative to the Mumford--Tate domain structure on $D$ (in which case the degeneration is \emph{maximal}).
\end{definition}

Thus extremal degenerations of a polarized Hodge structure represented by a point $x \in D$ are the least and most degenerate the reduced limit period map can realize for a period map \eqref{E:I4'} with $\Phi(z) = x$ for some $z\in \bH$.  There are a few subtleties in the concept.
\begin{a_list}
\item
We cannot talk of the degenerations of a generic $x \in D$.  As a trivial example, the infinitesimal period relation $I \subset TD$ may be zero.  Even if we make the reasonable assumption that $I$ is bracket--generating, we know of no result that ensures the existence of a period mapping \eqref{E:I4'}.
\item
Given \eqref{E:I4'}, we may replace it by the corresponding nilpotent orbit without changing the limit $F^\sb_\infty$.  If \eqref{E:I4'} arises from a family of algebraic varieties, the corresponding nilpotent orbit usually does not; however, this doesn't matter in the sense that the limiting mixed Hodge structure, constructed algebro--geometrically in \cite{MR0429885}, will be the same as that constructed analytically from the nilpotent orbit.  Thus we may speak unambiguously of the degeneration of the Hodge structure on $H^n(X_t , \bQ)_\tprim$ for $X_t = \pi^{-1}(t)$, $t \not=0$, in a family $\sX \stackrel{\pi}{\to} \Delta$.
%\item
%The converse question: Given a degeneration \eqref{E:I4'}, is there a family $\sX \stackrel{\pi}{\to} \Delta$ of projective algebraic varieties such that polarized Hodge structures on $H^n(X_t,\bQ)_\tprim$ has the same $F^\sb_\infty$ as \eqref{E:I4'}?  This question will be addressed in Section \ref{S:ag-real}.
\end{a_list}

The study of extremal degenerations requires that we understand the $G_\bR$--orbit structure of $\check D = G_\bC/P$.  The necessary material is presented in Sections \ref{S:Lie_str}--\ref{S:c_orb}.

%------------------------------------------------------------------------------
\subsection{$G_\bR$--orbit structure} \label{S:Lie_str}
%------------------------------------------------------------------------------

The representation theory of Lie groups and Lie algebras plays an essential r\^ole in the analysis of the geometry of the compact dual $G_\bC/P$ and its $G_\bR$--orbits.  In this section we outline that structure.

Fix a generalized flag variety $\check D = G_\bC/P$ and a $G_\bR$--orbit $\cO$.  Given a point $x \in \cO$ the Lie algebra $\fp_x$ of the stabilizer $P_x = \tStab_{G_\bC}(x)$ contains a Cartan subalgebra $\fh_x$ that is stable under conjugation \cite[Corollary 2.1.3]{MR2188135}.  In particular, $\fh_x = \fh_x(\bR)\ot\bC$ is defined over $\bR$.

Set $\fp = \fp_x$ and $\fh = \fh_x$.  Choose a Borel subalgebra $\fh \subset \fb \subset \fp$, and let $L\in\fh$ be the grading element \eqref{E:L_p} associated with this triple.  The fact that $\fh$ is closed under conjugation implies that $\fh(\bR) = \fh \cap \fg_\bR$ is Cartan subalgebra of $\fg_\bR$.  It also implies that $\bar\a$ is a root whenever $\a$ is a root.  It follows that 
\begin{center}
  \emph{$\bar L$ is a grading element.}\footnote{We may assume that $\fh \subset \fk$ if and only if $x$ lies in an open $G_\bR$--orbit.  When the orbit is not open, \eqref{E:conjL} will fail.} 
\end{center}
Additionally, the $\fh$--roots of $\fg$ decompose into three types:  we say $\a$ is \emph{real} if $\bar\a =\a$; we say $\a$ is \emph{imaginary} if $\bar\a = -\a$; and we say $\a$ is \emph{complex} otherwise.  

As elements of the Cartan subalgebra $L$ and $\overline L$ define a bigraded eigenspace decomposition
\begin{subequations} \label{SE:matgpq}
\begin{equation} 
  \fg \ = \ \op\,\fg^{p,q}
\end{equation}
given by 
\begin{equation}
  \fg^{p,q} \ = \ \{ \xi \in \fg \ : \ [ L,\xi] = p\xi \,,\ 
  [\overline L,\xi] = q \xi \} \,.
\end{equation}
\end{subequations}
This is the bigrading of \cite[Lemma 3.2]{KP2013}.  If the infinitesimal period relation is bracket--generating, then the filtration corresponding to $x$ is
\begin{equation} \label{E:Fpx}
  F^p_{\fg,x} \ = \ \bigoplus_{q\le -p} \fg^{q,\sb} \,.
\end{equation}
From this point on: 
\begin{equation}\label{E:bg}
\hbox{\emph{We assume that the infinitesimal period relation is bracket--generating.}}\footnote{In the event that the IPR is not bracket--generating one may either modify the definition of $L$ so that \eqref{E:Fpx} holds, as in \cite{KP2013}, or reduce to the case that the IPR is bracket--generating as in \cite[Section 3.3]{schubVHS}.}
\end{equation}

In the event that $F^\sb_{\fg,x} = F^\sb_{\fg,\infty}$ lies in the image of a reduced limit period mapping, the bigrading \eqref{SE:matgpq} is related to the Deligne splitting \eqref{E:Del_g} by 
\begin{equation} \label{E:Ivg}
  I^{p,q}_\fg \ = \ \fg^{q,p} \,,
\end{equation}
\cf\cite{MR3115136, KP2013}.  If $\cO$ is polarized by $N$ relative to $D$ (Definition \ref{d:pol}), then \eqref{E:Ivg} implies
\begin{equation}\label{E:NinI}
  N \ \in \ I^{-1,-1}_\fg \ = \ \fg^{-1,-1} \,.
\end{equation}

Note that $\fg^{p,q}$ is a direct sum of root spaces (and $\fh$ if $p,q=0$).  In particular, 
\begin{equation} \label{E:pq_roots}
  \renewcommand{\arraystretch}{1.4}
  \begin{array}{rcl}
  \fg^{p,q} & = & \displaystyle
  \bigoplus_{\mystack{\a(L)=p}{\bar\a(L)=q}} \fg^\a \,,
  \quad\hbox{if } (p,q)\not=(0,0)\,,\\
  \fg^{0,0} & = & \displaystyle \fh \ \op \ 
  \bigoplus_{\mystack{\a(L)=0}{\bar\a(L)=0}} \fg^\a \,.
  \end{array}
\end{equation}
Observe that 
\begin{equation} \label{E:conjgpq}
  \overline{\fg^{p,q}} \ = \ \fg^{q,p} \,;
\end{equation}
that is,
\begin{center}
\emph{complex conjugation corresponds to reflection about the line $p=q$.}
\end{center}
From \eqref{E:conjgpq} we see that $\fg^{p,q} \op \fg^{q,p}$ is defined over $\bR$.  Let $(\fg^{p,q} \op \fg^{q,p})_\bR = (\fg^{p,q} \op \fg^{q,p}) \,\cap\, \fg_\bR$ denote the real form.  Then the tangent, CR--tangent and normal spaces are given by 
\begin{eqnarray}
  \nonumber
  T_x \cO & = & \bigoplus_{p>0 \ \mathrm{or} \ q>0}
  (\fg^{p,q} \op \fg^{q,p})_\bR  \\
  \label{E:TNsp}
  T^\mathrm{CR}_x\cO & = & \bigoplus_{\mystack{p>0}{q\ge0}} 
  (\fg^{p,-q} \op \fg^{-q,p})_\bR \\
  \nonumber
  N_x\cO & = & \bi\,\bigoplus_{p,q>0} (\fg^{p,q} \op \fg^{q,p})_\bR \,.
\end{eqnarray}
In the case that $F^\sb_{\fg,x} = F^\sb_{\fg,\infty}$, equations \eqref{E:Ivg} and \eqref{E:TNsp} yield \eqref{SE:TN}.  Note also that Lie algebra of the stabilizer $R \subset G_\bR$ is
\[
  \fr \ = \ \left(\fp \op \bar\fp\right)_\bR \ = \  
  \bigoplus_{p,q\ge0} (\fg^{-p,-q}\op\fg^{-q,-p})_\bR \,.
\]
It is convenient to visualize this structure in the $(p,q)$--plane: see Figure \ref{f:pq}.  
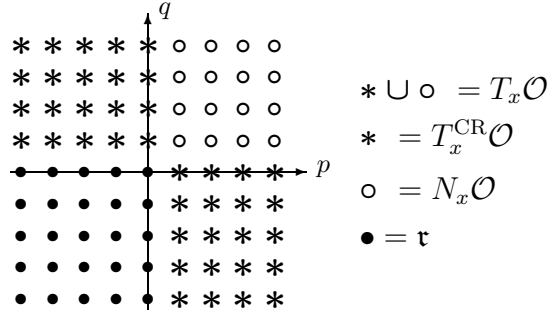
\begin{figure}[!ht]
\caption{Visualization of tangent and normal spaces}
\setlength{\unitlength}{4pt}
\begin{picture}(32,37)(-16,-19)
\begin{footnotesize}
\put(-13,0){\vector(1,0){28}} \put(16,0){$p$}
\put(0,-13){\vector(0,1){28}}\put(1,15){$q$}
\end{footnotesize}
\put(20,7){$\hbox{\bmath{$\ast \cup \circ$}} = T_x\cO$}
\put(20,2.5){$\hbox{\bmath{$\ast$}} = T^\mathrm{CR}_x\cO$}
\put(20,-2.5){$\hbox{\bmath{$\circ$}} = N_x\cO$}
\put(20,-7){$\bullet = \fr$}
% Stabilizer
\multiput(0,0)(0,-3){5}{\circle*{1.0}}
\multiput(-3,0)(0,-3){5}{\circle*{1.0}}
\multiput(-6,0)(0,-3){5}{\circle*{1.0}}
\multiput(-9,0)(0,-3){5}{\circle*{1.0}}
\multiput(-12,0)(0,-3){5}{\circle*{1.0}}
% CR tangent space
\multiput(-1,2.12)(-3,0){5}{\large{\bmath{$\ast$}}}
\multiput(-1,5.12)(-3,0){5}{\large{\bmath{$\ast$}}}
\multiput(-1,8.12)(-3,0){5}{\large{\bmath{$\ast$}}}
\multiput(-1,11.12)(-3,0){5}{\large{\bmath{$\ast$}}}
\multiput(2,-0.88)(0,-3){5}{\large{\bmath{$\ast$}}}
\multiput(5,-0.88)(0,-3){5}{\large{\bmath{$\ast$}}}
\multiput(8,-0.88)(0,-3){5}{\large{\bmath{$\ast$}}}
\multiput(11,-0.88)(0,-3){5}{\large{\bmath{$\ast$}}}
% CR compliment
\thicklines
\multiput(12,3)(0,3){4}{\circle{1.0}}
\multiput(9,3)(0,3){4}{\circle{1.0}}
\multiput(6,3)(0,3){4}{\circle{1.0}}
\multiput(3,3)(0,3){4}{\circle{1.0}}
\thinlines
\end{picture}
\label{f:pq}
\end{figure}
%

%------------------------------------------------------------------------------
\subsection{Codimension--one orbits} \label{S:codim=1}
%------------------------------------------------------------------------------

Note that, if $\cO$ is of codimension--one, then \eqref{E:pq_roots} and \eqref{E:TNsp} imply that there exists a real root $\a$ such that 
\[
  N_x\cO \ = \ \bi\, I^{p,p}_\bR \ = \ \bi\,\fg^{\a}_\bR \,,\quad\hbox{with}\quad p = \a(L) \,.
\]
The assumption \eqref{E:bg} forces $p = 1$ \cite[Corollary 4.4]{KP2013}.

Suppose that $\cO \subset \del D$.  Then for a suitably scaled root vector $N^+ \in \fg^\a_\bR$ (which is necessarily nilpotent) we have 
\[
  e^{\bi N^+} \cdot F^\sb_x \ \in \ D \,.
\]  
We may complete $N^+$ to an $\fsl_2$--triple \eqref{E:sl2trip} with $N \in \fg^{-\a}_\bR$.  Note that the $\fsl_2 = \tspan\{ N , Y , N^+\}$ gives an $\tSL_2(\bC)$--homogeneous embedding of $\bP^1 = \bC\bP^1$ into $\check D$ with $\bH = \bP^1 \cap D$ and $\bR\bP^1 = \bP^1 \cap \cO$.  In particular, 
\[
  \lim_{z\to\infty} e^{z N } \cdot \left( e^{\bi N^+} \cdot F^\sb_x \right) 
  \ \in \ \cO \,.
\]
It now follows from Definition \ref{d:pol} that we have recovered \cite[Proposition 5.16]{KP2013}:
\begin{quote}
\emph{Every codimension--one orbit $\cO \subset \del D$ is polarizable relative to $D$ by a root vector $N \in \fg^{-\a}_\bR$.}
\end{quote}

Moreover, in this case 
\[
  L \,+\, \overline L \ = \ Y \,,
\]
and the Deligne splitting $\fg_\bC =\op I_\fg^{p,q}$ is \emph{explicitly} given by \eqref{E:Ivg} as an eigenspace decomposition
\begin{equation} \label{E:Icodim1}
  I_\fg^{p,q} \ = \ \{ \xi \in \fg_\bC \ : \ [L,\xi] = q\xi \,,\ 
   [Y,\xi] = (p+q) \xi \} \,.
\end{equation}
Moreover, in the event that the Deligne splitting on $\fg_\bC$ is induced from one on $V_\bC$ as in \eqref{E:indI}, we have
\[
  I^{p,q} \ = \ \{ v \in V_\bC \ : \ L(v) = qv  \,,\ 
   Y(v) = (p+q) v \} \,.
\]
These decompositions are in practice straightforward to compute, \cf\cite{KR1} or Example \ref{eg:F4}.  

\begin{remark} \label{R:matpt}
Each $G_\bR$--orbit in $\check D$ contains a distinguished set of ``Matsuki points'' (Section \ref{S:mat_pt}).  The open orbit $D$ is related to the codimension--one orbits $\cO \subset \del D$ by an application of a Cayley transform to a Matsuki point: Given a Matsuki point $x_0 \in D$ one may apply a Cayley transform $\bc_\a \in G_\bC$ to obtain a Matsuki point $x = \bc_\a(x_0)$ in the codimension one orbit $\cO$, \cite{MR3115136, KP2013, KR1}.
\end{remark}

%------------------------------------------------------------------------------
\subsection{Orbit dimensions} \label{S:orb_dims}
%------------------------------------------------------------------------------

Fix a maximal, connected compact Lie subgroup $K_\bR \subset G_\bR$.  Let $\fk_\bR \subset \fg_\bR$ denote the Lie algebra of $K_\bR$, and let 
\[
  \fg_\bR \ = \ \fk_\bR \ \op \ \fk_\bR^\perp \,.
\]
denote the Cartan decomposition.  The complexification will be written as
\[
  \fg_\bC \ = \ \fk_\bC \ \op \ \fk^\perp_\bC \,.
\]
Let $\theta$ denote the Cartan involution
\begin{equation} \label{E:theta}
  \left.\theta\right|_{\fk_\bC} \ = \ \one \tand
  \left.\theta\right|_{\fk^\perp_\bC} \ = \ -\one \,.
\end{equation}
The dimensions of the $G_\bR$ and $K_\bR$--orbits through $x$ are given by certain $\fh$--root ``counts,'' \cf\eqref{E:dimGRorb} and \eqref{E:dimKRorb} below.  A comparison of these root counts yields a characterization (Lemma \ref{L:c_orb}) of the unique closed $G_\bR$--orbit in $\check D$.

%------------------------------------------------------------------------------
\subsubsection{Matsuki points} \label{S:mat_pt}
%------------------------------------------------------------------------------

We say that a point $x \in \check D = G/P$ is a \emph{Matsuki point} if the Lie algebra $\fp_x$ of the stabilizer $P_x = \tStab_{G_\bC}(x)$ contains a Cartan subalgebra $\fh_x$ that is stable under conjugation and the Cartan involution \eqref{E:theta}.  This implies that the $K_\bR$--orbit is equal to the intersection of the $K_\bC$--orbit with the $G_\bR$--orbit:  $K_\bR \cdot x = (G_\bR \cdot x) \,\cap\, (K_\bC \cdot x)$, \cf\cite[Chapter 8]{MR2188135}.  That is, the $G_\bR$--orbit and the $K_\bC$--orbit are Matsuki dual.  Every $G_\bR$--orbit (resp. $K_\bC$--orbit) contains a Matsuki point.

The fact that $\fh$ is closed under the Cartan involution implies that $\theta\a$ is a root whenever $\a$ is a root.  Moreover, 
\begin{equation} \label{E:ci}
  -\a = \theta\bar\a = \overline{\theta\a} \,,
\end{equation}
and the complex roots appear in quartets 
$$
  \left\{ \a \,,\, \bar\a \,,\, \theta\a \,,\, \theta\bar\a = \overline{\theta\a} \right\} \ = \ 
  \left\{ \pm \a \,,\, \pm \bar\a \right\} \,.
$$
Note that \eqref{E:ci} implies that the bigrading \eqref{SE:matgpq} satisfies
\begin{equation} \label{E:thetagpq}
  \theta(\fg^{p,q}) \ = \ \fg^{-q,-p} \,;
\end{equation}
that is,
\begin{center}
\emph{the Cartan involution corresponds to reflection about the line $p=-q$.}
\end{center}

%------------------------------------------------------------------------------
\subsubsection{The $G_\bR$--orbit} \label{S:GRorb}
%------------------------------------------------------------------------------

Let $\cO = G_\bR \cdot x$ be the $G_\bR$--orbit of $x$.  Then 
$$
  \tdim_\bR\cO \ = \ \tdim_\bR\,\fg_\bR \,-\, \tdim_\bR\,\fg_\bR\cap\fp \,.
$$
Note that 
$$
  \fg_\bR \cap \fp \ = \ (\fp\cap\bar\fp)_\bR \ = \ \fh(\bR) \ \op \ 
  \sum_{\a(L),L(\bar\a)\le0} (\fg^\a + \fg^{\bar\a})_\bR\,.
$$
The sum above is over roots in (closed) lower--left quadrant
$$
  \Delta( \le0,\le0 ) \ = \ 
  \{ \a \in \Delta \ : \ \a(L),L(\bar\a)\le0 \} \,,
$$
\cf Figure \ref{f:Ds}.  Set
$$
  \Delta(\cO) \ = \ \Delta \backslash \Delta( \le0,\le0 ) \,.
$$
Then the (real) tangent space
$$
  T_x \cO \ \simeq \ \fg_\bR / \fp 
  \ \simeq \ \sum_{\a\in\Delta(\cO)} (\fg^\a + \fg^{\bar\a})_\bR 
$$
as a vector space, and we conclude that
\begin{equation} \label{E:dimGRorb}
 \tdim_\bR\cO \ = \ \half\,
 \left|\left\{ \left.\a \in \Delta(\cO) \ \right| \ \a\not=\bar\a \right\}\right|
 \ + \ \left|\left\{ \left.\a \in \Delta(\cO) \ \right| \ \a=\bar\a\right\}\right| \,.
\end{equation}

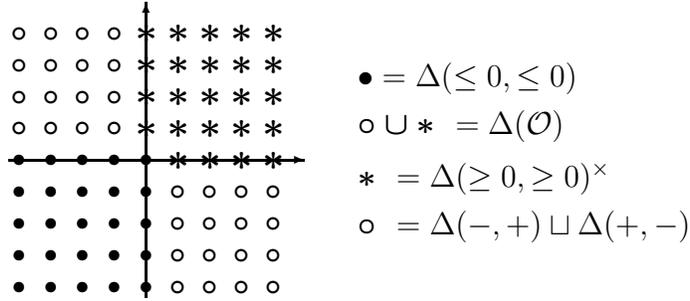
\begin{figure}[!ht]
\caption{Visualization of root sets}
\setlength{\unitlength}{4pt}
\begin{picture}(52,37)(-16,-19)
\put(-13,0){\vector(1,0){28}}
\put(0,-13){\vector(0,1){28}}
\put(20,7){$\bullet = \Delta( \le0,\le0 )$}
\put(20,2.5){$\hbox{\bmath{$\circ \cup \ast$}} = \Delta(\cO)$}
\put(20,-2.5){$\hbox{\bmath{$\ast$}} = \Delta(\ge0,\ge0)^\times$}
\put(20,-7){$\hbox{\bmath{$\circ$}} = \Delta(-,+)\sqcup\Delta(+,-)$}
% Stabilizer
\multiput(0,0)(0,-3){5}{\circle*{1.0}}
\multiput(-3,0)(0,-3){5}{\circle*{1.0}}
\multiput(-6,0)(0,-3){5}{\circle*{1.0}}
\multiput(-9,0)(0,-3){5}{\circle*{1.0}}
\multiput(-12,0)(0,-3){5}{\circle*{1.0}}
% CR tangent space
\thicklines
\multiput(-3,3)(0,3){4}{\circle{1.0}}
\multiput(-6,3)(0,3){4}{\circle{1.0}}
\multiput(-9,3)(0,3){4}{\circle{1.0}}
\multiput(-12,3)(0,3){4}{\circle{1.0}}
\multiput(3,-3)(3,0){4}{\circle{1.0}}
\multiput(3,-6)(3,0){4}{\circle{1.0}}
\multiput(3,-9)(3,0){4}{\circle{1.0}}
\multiput(3,-12)(3,0){4}{\circle{1.0}}
% CR compliment
\thinlines
\multiput(-1,2.1)(0,3){4}{\large{\bmath{$\ast$}}}
\multiput(2,-0.9)(0,3){5}{\large{\bmath{$\ast$}}}
\multiput(5,-0.9)(0,3){5}{\large{\bmath{$\ast$}}}
\multiput(8,-0.9)(0,3){5}{\large{\bmath{$\ast$}}}
\multiput(11,-0.9)(0,3){5}{\large{\bmath{$\ast$}}}
\thinlines
\end{picture}
\label{f:Ds}
\end{figure}
%

%------------------------------------------------------------------------------
\subsubsection{The $K_\bR$--orbit} \label{S:KRorb}
%------------------------------------------------------------------------------

Likewise, we may compute the dimension of the $K_\bR$--orbit 
$$
  \cO^{K_\bR} \ = \ K_\bR\cdot x
$$
as follows.  Decompose 
$$
  \Delta(\cO) \ = \ \Delta(\ge0,\ge0)^\times \,\sqcup\, \Delta(-,+) 
  \,\sqcup\, \Delta(+,-)
$$ 
into three disjoint sets by
\begin{eqnarray*}
  \Delta(\ge0,\ge0)^\times & = &  
  \{ \a \in \Delta(\cO) \ : \ -\a \in \Delta(\le0,\le0) \}
  \ = \ - \Delta(\ge0,\ge0) \backslash \{(0,0)\} \,,\\
  \Delta(+,-) & = & \{ \a \in \Delta \ : \ \a(L) >0 \,,\ L(\bar\a) < 0 \} \,,\\
  \Delta(-,+) & = & \{ \a \in \Delta \ : \ \a(L) <0 \,,\ L(\bar\a) >0 \} \,.
\end{eqnarray*}
We may visualize $\Delta(+,-)$ and $\Delta(-,+)$ as open quadrants in the $(p,q)$--plane, and $\Delta(\ge0,\ge0)^\times$ as the closed upper--right quadrant minus the origin, \cf Figure \ref{f:Ds}.  %Note that 
%$$
%  \{ \a \in \Delta(\cO) \ : \ \a = \bar\a \} \ \subset \ \Delta(\le0,\le0)^\times \,.
%$$

We have an identification $T_o \cO^{K_\bR} \simeq \fk_\bR / \fp$, as vector spaces.  So the dimension of $\cO^{K_\bR}$ is equal to the dimension of $\fk_\bR / \fp$.  Recall that $\fk_\bC \subset \fg_\bC$ is the fix point locus of the Cartan involution $\theta : \fg_\bC \to \fg_\bC$.  In particular, $\fk_\bC \ = \ \{ x + \theta x \ : \ x \in \fg_\bC \}$.   Then, making use of \eqref{E:conjgpq} and \eqref{E:thetagpq}, we have
\begin{equation} \label{E:dimKRorb}
\renewcommand{\arraystretch}{1.3}
\begin{array}{rcl}
 \tdim\,\cO^{K_\bR} & = &  
 \left|\left\{ \a \in \Delta(\cO) \ : \ \a = \bar\a \right\}\right|
  \ + \ 
 \left|\left\{ \left.\a \in \Delta(-,+) \ \right| 
               \ \a=\theta\a\right\}\right| \\
 & & + \ \half\,
 \left|\left\{ \left.\a \in \Delta(-,+) \ \right| 
               \ \a\not=\theta\a \right\}\right| \\
 & & + \ 
 \half\,
 \left|\left\{ \left.\a \in \Delta(\le0,\le0)^\times \ \right| 
               \ \a\not=\bar\a \right\}\right|  \,.
\end{array}
\end{equation}

%------------------------------------------------------------------------------
\subsection{Characterization of closed orbits} \label{S:c_orb}
%------------------------------------------------------------------------------

The flag manifold $\check D = G/P$ contains a unique closed $G_\bR$--orbit \cite[Theorem 3.3]{MR0251246}, which is contained in the closure of every other $G_\bR$--orbit \cite[Corollary 3.4]{MR0251246}.  It follows that the closed $G_\bR$--orbit is the unique orbit for which $\cO = \cO^{K_\bR}$.  Therefore $\cO$ is closed if and only if the two dimensions \eqref{E:dimGRorb} and \eqref{E:dimKRorb} are equal.  Whence we obtain the following characterization of the closed $G_\bR$--orbit.

\begin{lemma} \label{L:c_orb}
Let $x \in \check D$ be a Matsuki point, and let $\fh$ be a Cartan subalgebra of $\fg$ contained in the Lie algebra $\fp$ of the stabilizer $P = \tStab_G(x)$ that is stable under complex conjugation and the Cartan involution.  Then the $G_\bR$--orbit $\cO$ through $x$ is closed (equivalently, $\cO = \cO^{K_\bR}$) if and only if the $\fh$--roots satisfy
\begin{equation} \label{E:c_orb}
  \theta\a \ = \ \a \quad\hbox{for all} \quad \a \,\in\, \Delta(-,+) \,.
\end{equation}
That is, by \eqref{E:ci}, all the roots of $\Delta(-,+)$ are imaginary and compact.
\end{lemma}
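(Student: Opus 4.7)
The plan is to reformulate closedness of $\cO$ as the Lie-algebraic surjectivity $\fg_\bC = \fk_\bC + \fp$ and then decode this condition root by root via the bigrading \eqref{SE:matgpq}. Matsuki duality, in the form that the closed $G_\bR$-orbit is dual to an open $K_\bC$-orbit, makes these equivalent, since openness of $K_\bC\cdot x$ is exactly the surjectivity of $\fk_\bC\to\fg_\bC/\fp$. The Matsuki-point hypothesis ensures $\fh$ is stable under both conjugation and $\theta$, so $\overline{\fg^{p,q}} = \fg^{q,p}$ and $\theta(\fg^{p,q}) = \fg^{-q,-p}$; via \eqref{E:ci}, the $\theta$-fixed roots are precisely the imaginary ones.

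Working in the quotient $\fg_\bC/\fp = \bigoplus_{p>0}\fg^{p,\sb}$, the task reduces to determining for each $\a$ with $\a(L)>0$ whether $\fg^\a$ lies in the image of $\fk_\bC$. If $\theta\a=\a$, then $\fg^\a\subset\fk_\bC$ iff $\a$ is compact (and $\fg^\a\subset\fk_\bC^\perp$ otherwise), and no other root space of $\fk_\bC$ contributes to $\fg^\a$; so an imaginary noncompact $\a$ with $\a(L)>0$ obstructs. If $\theta\a\neq\a$, the intersection $\fk_\bC\cap(\fg^\a+\fg^{\theta\a})$ is the one-dimensional diagonal $\{\xi+\theta\xi:\xi\in\fg^\a\}$; this projects onto $\fg^\a$ modulo $\fp$ iff $\theta\xi\in\fp$, i.e., $\theta\a(L)\le 0$, i.e., $\a(\bar L)\ge 0$. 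If instead $\a(\bar L)<0$, the two distinct one-dimensional subspaces $\fg^\a$ and $\fg^{\theta\a}$ both survive in the quotient, while the diagonal contributes only one dimension; this again obstructs.

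Applying this to $-\a$ for each $\a\in\Delta(-,+)$ (so $-\a(L)>0$), the first obstruction rules out $\a$ imaginary noncompact, using that for imaginary $\a$ the compactness of $\a$ and $-\a$ coincide---this follows from the Killing-form identity $B(\theta E_\a,\theta E_{-\a}) = B(E_\a,E_{-\a})$ together with $\theta E_{\pm\a}=c_{\pm\a}E_{\pm\a}$ giving $c_\a c_{-\a}=1$. The second obstruction, noting $\theta(-\a) = \bar\a$ of bigrading $(\a(\bar L),\a(L))$ with $\a(\bar L)>0$, rules out complex $\a$; real roots are absent from $\Delta(-,+)$ because $\bar\a=\a$ forces bigrading on the diagonal $p=q$. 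Conversely, if every $\a\in\Delta(-,+)$ is imaginary and compact, then by $\a\mapsto-\a$ the same holds throughout $\Delta(+,-)$, both obstructions are cleared, $\fg_\bC=\fk_\bC+\fp$, and $\cO$ is closed.

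The main subtlety is the compactness conclusion: crude dimension counting forces only $\theta\a=\a$ on $\Delta(-,+)$; it is the Lie-algebraic analysis above---that $\fk_\bC$ has no component along $\fg^\a$ when $\a$ is imaginary noncompact---that upgrades imaginarity to imaginarity-plus-compactness. A secondary point to verify is that in the $\theta\a\neq\a$ case the one-dimensional diagonal genuinely fails to surject onto $\fg^\a+\fg^{\theta\a}$ modulo $\fp$ when both root spaces survive in the quotient, which uses only that $\fg^\a$ and $\fg^{\theta\a}$ are distinct one-dimensional root spaces.
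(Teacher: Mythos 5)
Your proof is correct, but it follows a genuinely different route from the paper's. The paper uses the fact that the closed orbit is the unique one with $\cO=\cO^{K_\bR}$ and then compares the two root--counting dimension formulas \eqref{E:dimGRorb} and \eqref{E:dimKRorb}; you instead pass to the Matsuki--dual formulation ($\cO$ closed if and only if $K_\bC\cdot x$ is open, i.e.\ $\fk_\bC+\fp=\fg_\bC$) and test surjectivity onto $\fg_\bC/\fp$ one root space at a time. Your root--by--root analysis is sound: for $\a(L)>0$ the only element of $\fk_\bC$ that can hit $\fg^\a$ modulo $\fp$ has $\fg^{\theta\a}$--component $\theta X_\a$, which forces exactly the dichotomy you describe, and your reduction of all obstructions to $\Delta(+,-)=-\Delta(-,+)$ is complete because an $\a$ with $\a(L)>0$ and $\bar\a(L)\ge0$ is never imaginary and never obstructed. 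What your approach buys is precisely the compactness refinement: it shows directly that an imaginary \emph{noncompact} root in $\Delta(+,-)$ also obstructs, so the characterization you prove is ``every root of $\Delta(-,+)$ is imaginary \emph{and compact},'' which is the condition asserted in the lemma's final sentence and the one genuinely needed for the ``if'' direction --- condition \eqref{E:c_orb} read literally ($\theta\a=\a$ on $\Delta(-,+)$) holds, for example, at any point of an open orbit whose isotropy contains a compact Cartan subalgebra, where every root is imaginary. A dimension count that does not separate compact from noncompact imaginary roots detects only the non--imaginary roots of $\Delta(-,+)$, whereas your argument that $\fk_\bC$ has no component along a noncompact imaginary root space, together with the observation that compactness is preserved under $\a\mapsto-\a$ (via $\theta$ fixing the coroot), closes exactly this gap.
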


\noindent Keeping in mind that $\a \mapsto \bar\a$ is a bijection between $\Delta(+,-)$ and $\Delta(-,+)$, we see that \eqref{E:c_orb} holds if and only if
$$
  \theta\a \ = \ \a \quad\hbox{for all} \quad \a \,\in\, \Delta(+,-) \,.
$$
Whence the lemma may be visualized as in Figure \ref{f:c_orb}.a, where the potentially nonzero $\fg^{p,q}$ are indicated by a node at the point $(p,q)$.
\begin{figure}[!h]
\begin{footnotesize}
\caption{Visualization of (polarized) closed $G_\bR$--orbit}
\setlength{\unitlength}{3.8pt}
\begin{picture}(32,37)(-16,-19)
\put(-13,0){\vector(1,0){28}} \put(16,0){$p$}
\put(0,-13){\vector(0,1){28}}\put(1,15){$q$}
\multiput(0,-12)(0,3){9}{\circle*{1.0}}
\multiput(3,0)(0,3){5}{\circle*{1.0}}
\multiput(6,0)(0,3){5}{\circle*{1.0}}
\multiput(9,0)(0,3){5}{\circle*{1.0}}
\multiput(12,0)(0,3){5}{\circle*{1.0}}
\multiput(-3,0)(0,-3){5}{\circle*{1.0}}
\multiput(-6,0)(0,-3){5}{\circle*{1.0}}
\multiput(-9,0)(0,-3){5}{\circle*{1.0}}
\multiput(-12,0)(0,-3){5}{\circle*{1.0}}
\multiput(-12,12)(3,-3){9}{\circle*{1.0}}
\put(-3,-17){(\ref{f:c_orb}.a)}
\end{picture}
\hspace{15pt}
\begin{picture}(32,37)(-16,-19)
\put(-13,0){\vector(1,0){28}} \put(16,0){$p$}
\put(0,-13){\vector(0,1){28}}\put(1,15){$q$}
\multiput(0,-6)(0,3){5}{\circle*{1.0}}
\multiput(3,0)(0,3){4}{\circle*{1.0}}
\multiput(6,0)(0,3){5}{\circle*{1.0}}
\multiput(9,3)(0,3){4}{\circle*{1.0}}
\multiput(12,6)(0,3){3}{\circle*{1.0}}
\multiput(-3,0)(0,-3){4}{\circle*{1.0}}
\multiput(-6,0)(0,-3){5}{\circle*{1.0}}
\multiput(-9,-3)(0,-3){4}{\circle*{1.0}}
\multiput(-12,-6)(0,-3){3}{\circle*{1.0}}
\multiput(-6,6)(3,-3){5}{\circle*{1.0}}
\put(-12,12){\circle*{1.0}}\put(12,-12){\circle*{1.0}}
\put(-3,-17){(\ref{f:c_orb}.b)}
\put(0,6){\circle{2}}
\put(6,0){\circle{2}}
\put(6,12){\circle{2}}
\put(12,6){\circle{2}}
\end{picture}
\label{f:c_orb}
\end{footnotesize}
\end{figure}
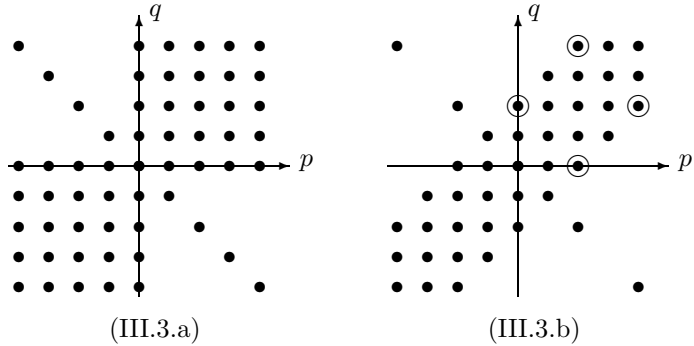
Equivalently, we will say that the bigraded $(L,\overline L)$--eigenspace decomposition \eqref{SE:matgpq} must be of the form depicted in Figure \ref{f:c_orb}.a.

\begin{remark*}
We remark that Figure \ref{f:c_orb}.a is necessary, but not sufficient, for an orbit to be closed.  That is, there are examples in which the bigrading takes this form, but \eqref{E:c_orb} fails.
\end{remark*}

%------------------------------------------------------------------------------
\subsection{Totally real orbits} \label{S:TR}
%------------------------------------------------------------------------------

A $G_\bR$--orbit $\cO \subset \check D$ is \emph{totally real} if the Cauchy--Riemannian tangent bundle $T^\mathrm{CR}\cO$ is trivial.  Visually, the orbit is totally real if there are no {\boldmath$\ast$\unboldmath}'s in Figure \ref{f:pq}.  By \eqref{E:TNsp} and Lemma \ref{L:c_orb} any totally real orbit is necessarily the unique closed orbit.  Moreover, recalling that 
\[
  \fp \ = \ \bigoplus_{p\ge0}\fg^{-p,\sb} \ = \ \fg^{\le0,\sb} \,,
\]
the first half of Theorem \ref{T:I6} is now evident:

\begin{theorem} \label{T:TR}
The following are equivalent:
\begin{i_list_emph}
\item 
The orbit $\cO_\mathrm{cl}$ is totally real.
\item 
The real dimension of the closed orbit is the complex dimension of the compact dual: $\tdim_\bR \cO_\mathrm{cl} = \tdim_\bC \check D$.
\item 
The stabilizer $P$ is $\bR$--split and $\cO_\mathrm{cl} = G_\bR/P_\bR$.
\end{i_list_emph}
\end{theorem}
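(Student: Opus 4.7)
The plan is to prove all three equivalences by reducing each condition to a single combinatorial statement $(\star)$ about the bigrading $\fg_\bC = \bigoplus \fg^{p,q}$ of \eqref{SE:matgpq}, evaluated at a Matsuki point $x \in \cO_\mathrm{cl}$. Specifically, $(\star)$ asserts that $\fg^{p,q} = 0$ whenever exactly one of $p,q$ is strictly positive; equivalently, the $(p,q)$-support of the bigrading is contained in $\{(p,q):p>0,\,q>0\}\cup\{(p,q):p\le 0,\,q\le 0\}$. This is strictly stronger than the ``closed orbit'' pattern depicted in Figure \ref{f:c_orb}.a, which also permits positions on the antidiagonal.

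To prove (i) $\Leftrightarrow (\star)$, I would simply read off the CR-tangent formula in \eqref{E:TNsp}: the $(p,q)$-support of $T^{\mathrm{CR}}_x\cO$ is precisely the complement of $(\star)$, so $T^{\mathrm{CR}}_x\cO_\mathrm{cl}=0$ is tautologically $(\star)$. For (iii) $\Leftrightarrow (\star)$, use $\fp=\bigoplus_{p\le 0}\fg^{p,\bullet}$ together with $\overline{\fg^{p,q}}=\fg^{q,p}$ to get $\bar\fp=\bigoplus_{q\le 0}\fg^{\bullet,q}$; the two are equal iff $(\star)$ holds. When $\bar\fp=\fp$ the parabolic $\fp$ is defined over $\bR$ with real form $\fp_\bR=\fp\cap\fg_\bR=\fr$, so $R=P_\bR$ and $\cO_\mathrm{cl}=G_\bR/P_\bR$; the reverse implication is immediate. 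Finally, for (i) $\Leftrightarrow$ (ii), the disjoint-support decomposition in \eqref{E:TNsp} gives $\dim_\bR\cO_\mathrm{cl}=\dim_\bR T^{\mathrm{CR}}_x\cO_\mathrm{cl}+\dim_\bR N_x\cO_\mathrm{cl}$; combined with the ambient identity $\dim_\bR\cO_\mathrm{cl}+\dim_\bR N_x\cO_\mathrm{cl}=2\dim_\bC\check D$, elimination of $\dim_\bR N_x\cO_\mathrm{cl}$ yields
\[
\dim_\bR\cO_\mathrm{cl}=\dim_\bC\check D+\tfrac12\dim_\bR T^{\mathrm{CR}}_x\cO_\mathrm{cl},
\]
so equality with $\dim_\bC\check D$ is equivalent to $T^{\mathrm{CR}}_x\cO_\mathrm{cl}=0$.

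I foresee no substantive obstacle: once the bigraded formulas of Section \ref{S:Lie_str} are in hand, the entire argument reduces to reading off the $(p,q)$-supports of various subspaces. The one minor point deserving of justification is the passage from the Lie-algebra statement $\bar\fp=\fp$ to the group-level claim that $P$ is $\bR$-split, which follows because $P$ is the normalizer of $\fp$ in $G_\bC$ and is therefore preserved by the conjugation $\sigma$, hence defined over $\bR$ with real points $P_\bR$.
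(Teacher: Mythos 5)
Your proof is correct, and it is essentially the argument the paper itself intends: Section \ref{S:TR} reduces all three conditions to the $(p,q)$--support of the bigrading \eqref{SE:matgpq} via \eqref{E:TNsp} and $\fp = \fg^{\le 0,\sb}$, exactly as you do with your condition $(\star)$. Your write-up merely makes explicit the dimension count for (i) $\Leftrightarrow$ (ii) and the conjugation argument $\fp=\bar\fp$ for (iii), which the paper leaves as ``evident.''
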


%------------------------------------------------------------------------------
\section{Minimal degenerations of polarized Hodge structures}
%------------------------------------------------------------------------------

Let $D$ be a Mumford--Tate domain.  In Section \ref{S:codim=1} we saw that every codimension--one $G_\bR$--orbit $\cO \subset \del D$ is polarizable relative to $D$, and gave explicit descriptions of Deligne splittings as eigenspace decompositions.  In Section \ref{S:min_pd} we specialize to the case that $D$ is a period domain and give a refined description.
%, and in Section \ref{S:ag-real} we give algebro--geometric realizations of these minimal degenerations.  
In Example \ref{eg:F4} we give an example illustrating the (in general) more complicated structure in the Mumford--Tate domain case.

%------------------------------------------------------------------------------
\subsection{Minimal degenerations in period domains} \label{S:min_pd}
%------------------------------------------------------------------------------

\begin{definition*}
A \emph{type I basic boundary component} for weight $n$ consists of limiting mixed Hodge structures that are of the form
\begin{eqnarray*}
  H^{n-1-2k}(k+1) & \stackrel{N}{\longrightarrow} & H^{n-1-2k}(k) \\
  & H^n & 
\end{eqnarray*}
for some $k$ with $0 \le 2k \le n-1$.  A \emph{type II basic boundary component} for weight $n=2m$ consists of limiting mixed Hodge structures that are of the form
\begin{eqnarray*}
  H^0(m+1) & \stackrel{N}{\longrightarrow} \ H^0(m) \ 
  \stackrel{N}{\longrightarrow} & H^{0}(m-1) \\
  & H^n\,. & 
\end{eqnarray*}
Any $G_\bR$--orbit $\cO \subset \del D$ containing the image of a basic boundary component $B(N)$ under the reduced limit period mapping is a \emph{basic boundary orbit}.
\end{definition*}

\noindent The Deligne splittings of the limiting mixed Hodge structures in a basic boundary component are illustrated in Figure \ref{f:basicBC}.

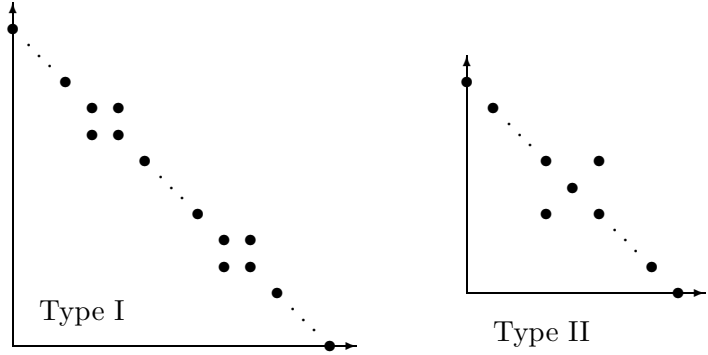
\begin{figure}
\caption{Limiting mixed Hodge structures in basic boundary component}
\setlength{\unitlength}{10pt}
\begin{picture}(13,14)
\put(0,0){\vector(0,1){13}}
\put(0,0){\vector(1,0){13}}
\put(0,12){\circle*{0.4}}
\multiput(0.6,11.4)(0.4,-0.4){3}{\circle*{0.15}}
\put(2,10){\circle*{0.4}}
\put(3,8){\circle*{0.4}} 
\put(4,9){\circle*{0.4}}
\put(3,9){\circle*{0.4}} 
\put(4,8){\circle*{0.4}}
\multiput(5.6,6.4)(0.4,-0.4){3}{\circle*{0.15}}
\put(5,7){\circle*{0.4}}
\put(7,5){\circle*{0.4}}
\put(8,3){\circle*{0.4}}
\put(9,4){\circle*{0.4}}
\put(8,4){\circle*{0.4}}
\put(9,3){\circle*{0.4}}
\put(10,2){\circle*{0.4}}
\multiput(10.6,1.4)(0.4,-0.4){3}{\circle*{0.15}}
\put(12,0){\circle*{0.4}}
\put(1,1){\small{Type I}}
\end{picture}
\hspace{30pt}
\setlength{\unitlength}{10pt}
\begin{picture}(9,10)(0,-2)
\put(0,0){\vector(0,1){9}}
\put(0,0){\vector(1,0){9}}
\put(0,8){\circle*{0.4}}\put(1,7){\circle*{0.4}}
\multiput(1.6,6.4)(0.4,-0.4){3}{\circle*{0.15}}
\put(3,5){\circle*{0.4}}
\put(3,3){\circle*{0.4}} 
\put(4,4){\circle*{0.4}}
\put(5,3){\circle*{0.4}} 
\put(5,5){\circle*{0.4}}
\multiput(5.6,2.4)(0.4,-0.4){3}{\circle*{0.15}}
\put(7,1){\circle*{0.4}}
\put(8,0){\circle*{0.4}}
\put(1,-1.8){\small{Type II}}
\end{picture}
\label{f:basicBC}
\end{figure}

\begin{theorem} \label{T:basicBC}
Given a period domain $D \subset \check D$, the codimension--one boundary orbits are basic.  Additionally, the nonzero off--diagonal $I^{p,q}$ all have dimension one.  Moreover, the type II basic boundary components occur only for even weight Hodge structures with $h^{m,m}$ odd.
\end{theorem}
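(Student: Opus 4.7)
The plan is to build on Section~\ref{S:codim=1}, which identifies every codimension-one boundary orbit $\cO$ as polarized by a nilpotent $N = E_{-\alpha} \in \fg^{-\alpha}_\bR$ for a real root $\alpha$ with $\alpha(L) = 1$. The associated $\fsl_2$-triple $\{N,\ Y = H_\alpha,\ N^+ = E_\alpha\}$ makes $W_\sb(N)$ the $Y$-eigenspace filtration of $V$; combining $L + \bar L = Y$ with the description~\eqref{E:Icodim1} of $I^{p,q}_\fg$ and the induced-splitting formula~\eqref{E:indI} yields
\[
I^{p,q} \ =\ \{v \in V_\bC \ :\ Lv = (q - n/2)\, v,\ Yv = (p+q-n)\, v\}
\]
for the Deligne splitting on $V$. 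Hence the theorem reduces to classifying real root vectors $N$ acting on the defining representation $V$ of $\fg \in \{\fsp(V,Q),\ \fso(V,Q)\}$ and then tracking each Jordan block through the above formula.

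First I would carry out the classification of $N$. Choose a Cartan $\fh \subset \fp \cap \bar\fp$ so that $V$ has one-dimensional $\fh$-weight spaces of weights $\pm e_1,\ldots,\pm e_g$ (plus a zero weight if $\dim V$ is odd). A direct inspection then yields three cases: (i) a long root $\alpha = 2e_i$ in $\fsp$ produces one Jordan block of size~$2$, so $N^2 = 0$ and $\trank N = 1$; (ii) a short root $\alpha = e_i + e_j$ in $\fsp$, or any root $\alpha = \pm e_i \pm e_j$ in $\fso$, produces two Jordan blocks of size~$2$, so $N^2 = 0$ and $\trank N = 2$; (iii) a short root $\alpha = e_i$, which exists only when the complexification of $\fso(V,Q)$ is of type $B$ (equivalently, $\dim V$ odd), produces a single Jordan block of size~$3$, so $N^2 \ne 0$, $N^3 = 0$, and $\trank N = 2$. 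Since $h^{p,q} = h^{q,p}$ forces $\dim V - h^{m,m}$ to be even, case~(iii) occurs if and only if $n = 2m$ is even and $h^{m,m}$ is odd---the final assertion of the theorem.

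Next I would plug each Jordan block into the formula for $I^{p,q}$: in~(i) the pair $(v_{e_i},\ v_{-e_i})$ populates two diagonal nodes $(\tfrac{n+1}{2}, \tfrac{n+1}{2})$ and $(\tfrac{n-1}{2}, \tfrac{n-1}{2})$; in~(ii) the blocks $(v_{e_i},\ v_{-e_j})$ and $(v_{e_j},\ v_{-e_i})$ populate four off-anti-diagonal nodes paired by complex conjugation; in~(iii) the size-$3$ block $(v_{e_i},\ v_0,\ v_{-e_i})$ populates the diagonal nodes $(m+1, m+1), (m, m), (m-1, m-1)$. Each targeted $I^{p,q}$ thus receives exactly one $\fh$-weight vector, reproducing the basic boundary components of Figure~\ref{f:basicBC} and establishing that all off-anti-diagonal nodes are one-dimensional---\emph{provided} the two size-$2$ blocks in~(ii) land in distinct $I^{p,q}$'s.

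The main obstacle is precisely this distinctness assertion, which fails for a generic Hodge structure but is forced by codimension one. For $\alpha = e_i + e_j$ in $\fsp$, the block tops $v_{e_i}, v_{e_j}$ have $L$-eigenvalues summing to $1$; if they were equal, i.e.\ $e_i(L) = e_j(L) = 1/2$, then the long roots $\beta = 2e_i, 2e_j$ would satisfy $\beta(L) = 1$ and, using $\bar L = Y - L$ with $\beta(H_\alpha) = 2$, also $\beta(\bar L) = 1$. This would place $\fg^{2e_i}, \fg^{2e_j} \subset \fg^{1,1}$ alongside $\fg^\alpha$, contradicting the codimension-one assumption $\tdim_\bC \fg^{1,1} = 1$. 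Hence $e_i(L) \ne e_j(L)$, and the two Jordan blocks land in distinct $I^{p,q}$'s. An entirely parallel argument, comparing $\alpha = \pm e_i \pm e_j$ with the other roots in the same $\alpha$-string satisfying $\beta(H_\alpha) = 2$, disposes of the $\fso$ rank-$2$ cases and completes the proof.
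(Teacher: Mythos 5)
Your overall strategy coincides with the paper's: reduce to a root vector $N\in\fg^{-\a}_\bR$ via Section \ref{S:codim=1}, classify the possible $\a$ on the standard representation of $\fsp(V,Q)$ or $\fso(V,Q)$, and read off the Jordan type. The paper does this with explicit matrix normal forms \eqref{E:rtN1}--\eqref{E:rtN3} and stops at ``$N^2=0$, $\trank N\le 2$'' versus ``$N^3=0\ne N^2$, $\trank N=2$,'' plus the parity observation $d=2c+1$; your weight-space version of the same classification is equivalent and correct, as is your (corrected) eigenvalue formula for $I^{p,q}$. Where you go beyond the written proof is in checking that the two size-two Jordan blocks land in \emph{distinct}, conjugate bidegrees, which is exactly what the one-dimensionality of the off-diagonal $I^{p,q}$ (and indeed the ``basic'' shape itself) requires; the paper leaves this implicit. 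Your argument for $\fsp$ --- that $e_i(L)=e_j(L)=\tfrac12$ would force $\fg^{2e_i},\fg^{2e_j},\fg^{e_i+e_j}\subset\fg^{1,1}$, contradicting $\tdim\fg^{1,1}=1$ --- is correct and is genuinely needed there, since in odd weight the $L$-eigenvalues on $V$ lie in $\bZ+\tfrac12$ and the coincidence is not excluded a priori.

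The one step that fails as written is the closing sentence: the ``entirely parallel argument'' does not exist for $\fso$, because $2e_i$ is not a root there, and for $\a=\pm e_i\pm e_j$ the only root $\b$ with $\b(H_\a)=2$ is $\a$ itself; so there is no second root to push into $\fg^{1,1}$. Fortunately the distinctness is automatic in that case for a simpler reason: in even weight $n$ the relation $V^{p,q}=V_{(q-p)/2}$ forces all $L$-eigenvalues on $V$ to be integers, and the two block-tops have $L$-eigenvalues summing to $\a(L)=1$, hence one even and one odd --- they cannot coincide. With that substitution your proof is complete, and in fact slightly more thorough than the one in the paper.
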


\begin{proof}
Any codimension--one orbit $\cO \subset \tbd(D)$ contains the image under the reduced limit period mapping of a boundary component $B(N)$; moreover we may take $N$ to be a root vector (Section \ref{S:codim=1}).  Therefore, for an appropriate choice of basis $\{ e_i \}_{i=1}^d$ of $V$ we may take $N$ to be of one of the following normal forms.  Let $\{ e^i \}_{i=1}^d$ be the dual basis of $V^*$, and let $\{ e_j^i = e_j \ot e^i \}_{i,j=1}^d$ be the corresponding basis of $\tEnd(V)$.

Suppose the weight $n$ is odd, so that $d = 2c$.  The nilpotent $N$ is of one of the following forms
\begin{equation}\label{E:rtN1}
  e^i_j - e^{c+j}_{c+i} \,,\quad
  e^{c+i}_j + e^{c+j}_i \,,\quad
  e^i_{c+j} + e^j_{c+i} \,,\quad
  e^i_{c+i} \,,\quad e^{c+i}_i \,,\quad\hbox{with}\quad i,j \le c \,.
\end{equation}
If the weight $n$ is even, and the dimension $d = 2c$ of $V$ is even, then the nilpotent $N$ is of one of the following forms
\begin{equation}\label{E:rtN2}
  e^i_j - e^{c+j}_{c+i} \,,\quad
  e^{c+i}_j - e^{c+j}_i \,,\quad
  e^i_{c+j} - e^j_{c+i} \,,\quad\hbox{with}\quad i,j \le c \,.
\end{equation}
In the event that the dimension $d = 2c+1$ is odd, then the nilpotent $N$ is either of one of the three forms in \eqref{E:rtN2}, or of one of the following forms
\begin{equation}\label{E:rtN3}
  e^d_i - e^{c+i}_d \,,\quad
  e^d_{c+i} - e^i_d \,,\quad\hbox{with}\quad i \le c \,.
\end{equation}
It is clear from these explicit expressions that:
\begin{bcirclist}
\item If $N$ is of one of the forms in \eqref{E:rtN1} or \eqref{E:rtN2}, then $N^2 = 0$ and $N$ has rank at most two, so that the boundary component is type I basic.
\item
If $N$ if of one of the forms in \eqref{E:rtN3}, then $N^3=0$ while $N^2 \not= 0$, and $N$ has rank two, so that the boundary component is type II basic.  Moreover, in this case the weight $n = 2m$ is even, and $d = 2c+1 = \sum h^{p,q}$ forces $h^{m,m}$ to be odd.
\end{bcirclist}
\end{proof}

%------------------------------------------------------------------------------
%\subsection{Algebro--geometric realization} \label{S:ag-real}
%------------------------------------------------------------------------------

%Here we give an algebro--geometric realization of the Type I basic boundary components.

%------------------------------------------------------------------------------
\subsection{Minimal degenerations in Mumford--Tate domains} \label{S:min_mtd}
%------------------------------------------------------------------------------

In the case of Mumford--Tate domains the $I^{p,q}$ corresponding to a minimal degeneration may be more complicated that those arising in the case of period domains (Figure \ref{f:basicBC}).  Illustrations of this are given in Examples \ref{eg:G2} (see, in particular, the rows 2 and 3 of Figure \ref{f:G2/B}) and \ref{eg:F4}.  What we can say in general is

\begin{proposition} \label{P:min_mtd}
Fix a flag domain $G_\bR/R$ admitting the structure of a Mumford--Tate domain $D$.  Then there exists a Hodge representation $(V,Q)$ of $G$ that realizes $G_\bR/R$ as Mumford--Tate domain $D$ and with the property that each codimension--one $G_\bR$--orbit $\cO \subset \del D \subset \check D$ is polarized by a limiting mixed Hodge structure $(V,W_\sb(N),F^\sb)$ such that 
\[
  N^3 \ = \ 0
  \quad\hbox{as an element of} \quad
  \fg_\bR \ \subset \ \tEnd(V_\bR,Q) \,.
\]
In fact, if $G_\bC \not= G_2$ is simple, then we may take $V$ to be the adjoint representation $\fg$; if $G_\bC = G_2(\bC)$, then we may take $V$ to be the standard representation $V_\bR = \bR^7$.
\end{proposition}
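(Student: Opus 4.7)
The plan is to invoke Section \ref{S:codim=1}, which tells us that every codimension-one $G_\bR$-orbit $\cO \subset \del D$ is polarized relative to $D$ by a real root vector $N \in \fg^{-\a}_\bR$ with $\a(L)=1$. The required identity $N^3 = 0$ in $\tEnd(V_\bR, Q)$ then reduces to checking that every such root vector of $\fg$ cubes to zero on a suitably chosen $V$. Since any root lies in a unique simple ideal of $\fg$, forming direct sums of the representations constructed below for each simple factor handles the general case; I will therefore assume $\fg_\bC$ is simple.

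I next make the choice $V = \fg$ (adjoint representation, polarized by $-Q_\fg$ being the Killing form) when $\fg_\bC \not= G_2(\bC)$, and $V = \bR^7$ (standard representation of $G_2$) when $\fg_\bC = G_2(\bC)$. In both cases the highest weight $\lambda$ -- the highest root $\tilde\a$ in the first case, and the fundamental weight $\w_1 = 2\a_1 + \a_2$ (itself a short root of $G_2$) in the second -- is a root, so the grading element $L$ is integer-valued on $\lambda$: $L(\lambda) \in \bZ \subset \half\bZ$. By Section \ref{S:relD} this makes $V$ a Hodge representation whose $G_\bR$-orbit of $F^\sb$ realizes $G_\bR/R$ as the Mumford-Tate domain $D$.

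The key computation is then the nilpotency. In the adjoint case $\tad(N)^k(\fg^\b) \subset \fg^{\b-k\a}$, which vanishes unless every term of $\b, \b-\a, \ldots, \b-k\a$ remains a root (interpreting $\fg^0 := \fh$ and noting that $\fg^{-2\a} = 0$ in a reduced root system). The relevant root-system input is that, outside of $G_2$, every $\a$-string in a simple Lie algebra has length at most three: length $\le 2$ in the simply-laced types, length $\le 3$ in types $B_n, C_n, F_4$. This yields $\tad(N)^3|_{\fg^\b} = 0$ for $\b \not= \pm\a$, while on $\fg^\a$ the chain $\fg^\a \to \fh \to \fg^{-\a} \to \fg^{-2\a} = 0$ gives the same vanishing. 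In the $G_2$ case the weights of $\bR^7$ are $\{0\} \cup \{\pm\a_1, \pm(\a_1+\a_2), \pm(2\a_1+\a_2)\}$ -- that is, $0$ together with the short roots -- and a direct check by Weyl orbit (one short and one long root suffice) shows that $\mu - 3\a$ is never a weight of $V$, giving $N^3 = 0$.

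The main obstacle I anticipate is confirming that $G_2$ is the unique simple type where the adjoint choice fails. This comes down to the fact that in $G_2$ the $\a_1$-string through the long root $3\a_1 + \a_2$ has length $4$ -- namely $\{\a_2, \a_1+\a_2, 2\a_1+\a_2, 3\a_1+\a_2\}$ -- and consequently $\tad(N)^3 \not= 0$ on $\fg^{3\a_1+\a_2}$ for $N$ a nonzero element of $\fg^{-\a_1}$. Passing to the smaller $7$-dimensional representation -- whose weight support avoids the long roots of $G_2$ -- shortens every root-vector chain on $V$ to length at most three, restoring $N^3 = 0$.
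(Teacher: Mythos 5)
Your proposal is correct and takes essentially the same approach as the paper: the paper likewise reduces to the fact that codimension-one orbits are polarized by root vectors $N\in\fg^{-\a}_\bR$, then uses the bound on $\a$-string lengths (at most three outside $\fg_2$) to get $N^3=0$ on the adjoint representation, falling back to the standard $7$-dimensional representation for $G_2$. Your write-up simply supplies more of the root/weight bookkeeping that the paper leaves as "well known" or illustrates via Figure \ref{f:G2/B}.
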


\noindent The proof follows Example \ref{eg:F4}.

\begin{example}[An $F_4$ Mumford--Tate domain] \label{eg:F4}
Consider the case that the Mumford--Tate group is the exceptional simple Lie group of rank four.  The smallest representation $V_\bC$ of $G_\bC = F_4(\bC)$ is of dimension 26.  There exist real forms $V_\bR$ and $G_\bR = \mathrm{F\,I}$, the latter of real rank four and with maximal compact subalgebra $\fk_\bR = \fsp(3)\op\fsu(2) \subset \fg_\bR$, with the property that $V_\bR$ admits the structure of a $G_\bR$--Hodge representation with Hodge numbers 
\[
  \bh \ = \ ( 1^4 , 2^9, 1^4 ) \,.
\]  
In this case the compact dual $G_\bC/B$ is the flag variety and has dimension 24.  The Deligne splittings associated with the four codimension one boundary orbits are depicted in Figure \ref{f:F4}; these are computed by \eqref{E:Icodim1}.  Each circled node has dimension two, the remainder have dimension one.    Note that in the first row of Figure \ref{f:F4} we have $N^2 = 0$; in the second row $N^2 \not=0$ and $N^3 = 0$.
\begin{figure}[!ht]
\caption{Deligne splittings associated with codimension one boundary orbits for $D \subset F_4(\bC)/B$.}
\setlength{\unitlength}{6pt}
\begin{picture}(18,19)
\put(0,0){\vector(1,0){18}} \put(0,0){\vector(0,1){18}}  \put(0,16){\circle*{0.5}} \put(1,15){\circle*{0.5}} \put(2,14){\circle*{0.5}} \put(3,12){\circle*{0.5}} \put(4,11){\circle*{0.5}} \put(4,13){\circle*{0.5}} \put(5,10){\circle*{0.5}} \put(5,12){\circle*{0.5}} \put(6,10){\circle*{0.5}} \put(6,11){\circle*{0.5}} \put(7,9){\circle*{0.5}} \put(8,8){\circle*{0.5}} \put(9,7){\circle*{0.5}} \put(10,5){\circle*{0.5}} \put(10,6){\circle*{0.5}} \put(11,4){\circle*{0.5}} \put(11,6){\circle*{0.5}} \put(12,3){\circle*{0.5}} \put(12,5){\circle*{0.5}} \put(13,4){\circle*{0.5}} \put(14,2){\circle*{0.5}} \put(15,1){\circle*{0.5}} \put(16,0){\circle*{0.5}}
\multiput(7,9)(1,-1){3}{\circle{1.0}}
\end{picture}
\hspace{15pt}
\begin{picture}(18,19)
\put(0,0){\vector(1,0){18}} \put(0,0){\vector(0,1){18}}  \put(0,16){\circle*{0.5}} \put(1,15){\circle*{0.5}} \put(2,13){\circle*{0.5}} \put(3,14){\circle*{0.5}} \put(4,12){\circle*{0.5}} \put(5,10){\circle*{0.5}} \put(5,11){\circle*{0.5}} \put(6,9){\circle*{0.5}} \put(6,11){\circle*{0.5}} \put(7,9){\circle*{0.5}} \put(7,10){\circle*{0.5}} \put(8,8){\circle*{0.5}} \put(9,6){\circle*{0.5}} \put(9,7){\circle*{0.5}} \put(10,5){\circle*{0.5}} \put(10,7){\circle*{0.5}} \put(11,5){\circle*{0.5}} \put(11,6){\circle*{0.5}} \put(12,4){\circle*{0.5}} \put(13,2){\circle*{0.5}} \put(14,3){\circle*{0.5}} \put(15,1){\circle*{0.5}} \put(16,0){\circle*{0.5}}
\multiput(4,12)(4,-4){3}{\circle{1.0}}
\end{picture}
\vspace{5pt}\\
\begin{picture}(18,19)
\put(0,0){\vector(1,0){18}} \put(0,0){\vector(0,1){18}}  \put(0,16){\circle*{0.5}} \put(1,14){\circle*{0.5}} \put(2,15){\circle*{0.5}} \put(3,12){\circle*{0.5}} \put(4,11){\circle*{0.5}} \put(4,13){\circle*{0.5}} \put(5,11){\circle*{0.5}} \put(5,12){\circle*{0.5}} \put(6,9){\circle*{0.5}} \put(6,10){\circle*{0.5}} \put(7,7){\circle*{0.5}} \put(7,10){\circle*{0.5}} \put(8,8){\circle*{0.5}} \put(9,6){\circle*{0.5}} \put(9,9){\circle*{0.5}} \put(10,6){\circle*{0.5}} \put(10,7){\circle*{0.5}} \put(11,4){\circle*{0.5}} \put(11,5){\circle*{0.5}} \put(12,3){\circle*{0.5}} \put(12,5){\circle*{0.5}} \put(13,4){\circle*{0.5}} \put(14,1){\circle*{0.5}} \put(15,2){\circle*{0.5}} \put(16,0){\circle*{0.5}}
\put(8,8){\circle{1.0}}
\end{picture}
\hspace{15pt}
\begin{picture}(18,19)
\put(0,0){\vector(1,0){18}} \put(0,0){\vector(0,1){18}}  \put(0,15){\circle*{0.5}} \put(1,16){\circle*{0.5}} \put(2,14){\circle*{0.5}} \put(3,13){\circle*{0.5}} \put(4,11){\circle*{0.5}} \put(4,12){\circle*{0.5}} \put(5,10){\circle*{0.5}} \put(5,12){\circle*{0.5}} \put(6,9){\circle*{0.5}} \put(6,11){\circle*{0.5}} \put(7,7){\circle*{0.5}} \put(7,10){\circle*{0.5}} \put(8,8){\circle*{0.5}} \put(9,6){\circle*{0.5}} \put(9,9){\circle*{0.5}} \put(10,5){\circle*{0.5}} \put(10,7){\circle*{0.5}} \put(11,4){\circle*{0.5}} \put(11,6){\circle*{0.5}} \put(12,4){\circle*{0.5}} \put(12,5){\circle*{0.5}} \put(13,3){\circle*{0.5}} \put(14,2){\circle*{0.5}} \put(15,0){\circle*{0.5}} \put(16,1){\circle*{0.5}}
\put(8,8){\circle{1.0}}
\end{picture}
\label{f:F4}
\end{figure}
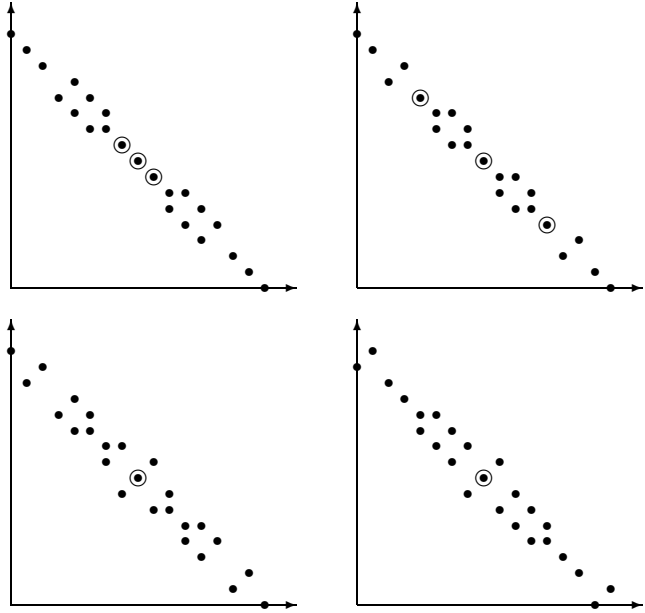
\end{example}

\begin{proof}[Proof of Proposition \ref{P:min_mtd}]
Given a polarized Hodge structure $(V,Q,F^\sb) \in D$, let $(\fg,Q_\fg,F^\sb_\fg)$ denote the induced polarized Hodge structure on $\fg_\bR$ (Section \ref{S:ind2}).  Any codimension--one boundary $G_\bR$--orbit $\cO \subset \del D$ is polarized by a root vector $N \in \fg^{-\a}_\bR$.  It is well--known that root vectors satisfy $N^4 = 0$ as elements of $\tEnd(\fg)$.  (Equivalently, every $\a$--string has length at most four.)  
\begin{bcirclist}
\item
If $\a$ is \emph{not} a root of the exceptional Lie algebra $\fg_2$, then $N^3 = 0$ on $\fg_\bC$ (every $\a$--string has length at most three).
\item
If $\a$ is a root of $\fg_2$, then it is possible that $N^3\not=0$ on $\fg_\bC$; this is illustrated in Figure \ref{f:G2/B} (row 3, column 2).  However, as is also illustrated in Figure \ref{f:G2/B} (rows 2 and 3, column 1), any root vector will satisfy $N^3 = 0$ as an element of $\fg_\bC \subset \tEnd(\bC^7)$ acting on the standard representation.
\end{bcirclist}
\end{proof}

%------------------------------------------------------------------------------
\section{Maximal degenerations of polarized Hodge structures}
%------------------------------------------------------------------------------

%------------------------------------------------------------------------------
\subsection{Polarizable closed orbits} \label{S:PCO}
%------------------------------------------------------------------------------

In the case that the closed orbit is polarizable, Lemma \ref{L:c_orb} and Figure \ref{f:c_orb}.a may be refined.  We now recall Theorem \ref{T:I7} and prove the slightly stronger 

\begin{theorem} \label{T:cp_orb} 
Suppose that the limiting mixed Hodge structure $(\fg,W_\sb(N),F^\sb)$ is sent to the closed orbit under the reduced limit period mapping.  Then Deligne splitting $\fg_\bC = \op I^{p,q}_\fg$ satisfies:  
\begin{subequations} \label{SE:cp}
\begin{eqnarray}
  \label{E:cp1}
  I_\fg^{p,-q} = 0 & \hbox{for all} & p\not=q > 0\,, \\
  \label{E:cp3}
  I_\fg^{p,-p} = 0 & \hbox{for all} & \hbox{odd } \ p \ge 3 \,,\\
  \label{E:cp2}
  I_\fg^{p,q} = 0 & \hbox{for all} & p+q\not=0 \quad \hbox{with} \quad |p-q| > 2 \,.
\end{eqnarray}
\end{subequations}
Moreover, any $N$--string in $\op_{q-p=2}\,\fg^{p,q}$ has length $\equiv 3$ mod $4$.  In particular, if $\tGr_{k,\tprim}^{W_\sb(N)_\fg} \not=0$, then $k$ is even.
\end{theorem}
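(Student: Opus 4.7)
I will combine three ingredients: (i) the closed-orbit characterization of Lemma \ref{L:c_orb}; (ii) the $\fsl_2$-triple $(N,Y,N^+)$ attached to $N$, normalized under the $\bR$-split convention so that $N \in \fg^{-1,-1}$, $Y = L + \overline{L} \in \fg^{0,0}$, and $N^+ \in \fg^{1,1}$; and (iii) the second Hodge--Riemann bilinear relation (HR2) for the induced polarized limiting mixed Hodge structure $(\fg, W_\sb(N)_\fg, F^\sb_\fg)$. Throughout I use $I^{p,q}_\fg = \fg^{q,p}$ from \eqref{E:Ivg}. Since $Y$ acts on $\fg^{a,b}$ by $a+b$ and $N$ sends $\fg^{a,b}$ into $\fg^{a-1,b-1}$, the $\fsl_2$-submodules of $\fg_\bC$ lie on diagonals $\{a - b = \text{const}\}$ of the bigrading.

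\textbf{Steps 1 and 2.} The identity $I^{p,-q}_\fg = \fg^{-q,p}$ reduces \eqref{E:cp1} to the assertion that $\fg^{a,b} = 0$ whenever $a < 0 < b$ with $a + b \neq 0$; any such nonzero summand would contribute a complex (non-imaginary) root to $\Delta(-,+)$, contradicting Lemma \ref{L:c_orb}. For \eqref{E:cp2} I argue by contradiction: suppose $\fg^{a,b} \neq 0$ with $a + b \neq 0$ and $|a - b| \geq 3$, and let $(a_0,b_0)$ be the primitive of an $\fsl_2$-irreducible submodule containing a nonzero vector of $\fg^{a,b}$, so $a_0 - b_0 = a - b$ and $k := a_0 + b_0 \geq 0$. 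The string $\{\fg^{a_0 - t, b_0 - t}\}_{t=0}^{k}$ visits the open 2nd or 4th quadrant for the $\geq |a-b| - 1 \geq 2$ integers $t$ strictly between $b_0$ and $a_0$; at least one such $t \in [0,k]$ differs from $k/2$, so the corresponding $\fg^{p,q}$ vanishes by \eqref{E:cp1}, contradicting $\fsl_2$-irreducibility.

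\textbf{Step 3.} For \eqref{E:cp3} I apply HR2. A nonzero $v \in I^{p,-p}_\fg = \fg^{-p,p}$ lies in a root space of an imaginary root $\alpha$ with $\alpha(L) = -p$; Lemma \ref{L:c_orb} forces $\alpha$ to be compact, so $v + \bar v \in \fk_\bR$ and $\kappa(v,\bar v) = \tfrac{1}{2}\kappa(v+\bar v, v+\bar v) < 0$, whence $Q_\fg(v,\bar v) > 0$. The Weil operator of the weight-$0$ Hodge structure on $\tGr^{W_\sb(N)_\fg}_0$ acts on $I^{p,-p}_\fg$ by $i^{2p} = (-1)^p$, and HR2 demands $(-1)^p Q_\fg(v,\bar v) > 0$; this forces $p$ to be even and yields \eqref{E:cp3}.

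\textbf{Step 4 and main obstacle.} For the string-length assertion on $\oplus_{q-p=2}\fg^{p,q}$, a primitive sits in $I^{p_0+2,p_0}_\fg$ of even weight $k = 2p_0+2$, with Weil factor $i^2 = -1$; unpacking $Q_{\fg,k}(v,\bar v) = \epsilon_k\,Q_\fg(v, N^k\bar v)$ using $\tAd$-invariance of $\kappa$ to transport $\bar v$ along the $N^k$-string produces a sign whose parity depends on $p_0 \pmod 2$, and HR2 positivity forces $p_0$ even, i.e., $k \equiv 2 \pmod 4$ and length $\equiv 3 \pmod 4$. The final ``$k$ even'' statement will follow by noting that \eqref{SE:cp} confines any primitive to one of the diagonals $|p-q| \in \{0,1,2\}$ (or to the anti-diagonal, already handled by Step 3): an analogous Weil-operator/compact-root sign analysis then rules out primitives on the $|p-q| = 1$ diagonals, which would carry odd weight. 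The hardest part of the argument will be this last step, requiring careful sign bookkeeping that simultaneously tracks the Weil operator on each Deligne summand, the compact-vs-noncompact dichotomy for imaginary roots via Lemma \ref{L:c_orb}, and the $\tAd$-invariance of $\kappa$ along the $N^k$-transported string, all consistently with HR2 for $Q_\fg = -\kappa$ (up to positive rescaling).
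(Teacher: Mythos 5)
Your Steps 1--3 and the main computation of Step 4 reproduce the paper's proof in all essentials: \eqref{E:cp1} is exactly Lemma \ref{L:c_orb} read through $I^{p,q}_\fg=\fg^{q,p}$; \eqref{E:cp2} comes from the uninterrupted $N$--strings forcing a would-be summand with $|p-q|>2$, $p+q\neq0$ to propagate into the open second or fourth quadrant off the anti-diagonal, contradicting \eqref{E:cp1}; and \eqref{E:cp3} together with the string-length claim come from HR2 on $\tGr_{0,\tprim}$ (resp.\ $\tGr_{2k,\tprim}$), the Weil-operator signs $(-1)^p$ (resp.\ $i^2=-1$), $\tAd$--invariance of $Q_\fg$ to walk to the middle of the string, and the positivity $Q_\fg(v,\bar v)>0$ for $v$ in a compact imaginary root space, which Lemma \ref{L:c_orb} supplies exactly for $\fg^{-p,p}$ with $p\ge1$. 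Two points you leave implicit but should record: that $\fg^{-p,p}\subset\tGr_{0,\tprim}^{W_\sb(N)_\fg}$ for $p\ge2$ (this follows from \eqref{E:cp1}, since $N(\fg^{-p,p})\subset\fg^{-p-1,p-1}=0$), and that the positivity extends from single root vectors to arbitrary elements of $\fg^{-1,1}$ because $Q_\fg(\fg^\a,\fg^{-\b})=0$ for $\a\neq\b$. Also, in Step 2 your count of admissible $t$ should separate the case $b_0<0$, where the primitive itself already lies in a forbidden open quadrant; this is a one-line fix, not a real issue.

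The one genuine gap is the closing assertion that $\tGr^{W_\sb(N)_\fg}_{k,\tprim}\neq0$ forces $k$ even, which amounts to excluding primitives on the $|p-q|=1$ diagonals. You propose to do this by ``an analogous Weil-operator/compact-root sign analysis,'' but the analogy fails exactly where the sign input is needed: the middle of such a string sits in $\fg^{0,\pm1}\op\fg^{\mp1,0}$, i.e.\ in root spaces on the coordinate axes of the $(L,\overline L)$--bigrading, and Lemma \ref{L:c_orb} constrains only the roots of $\Delta(-,+)\sqcup\Delta(+,-)$ in the open quadrants. The axis roots here are complex (their conjugates live in $\fg^{\pm1,0}\op\fg^{0,\mp1}$), so neither imaginarity nor compactness is available, and the sign of the relevant Hermitian form is not pinned down by the same mechanism; indeed HR2 by itself is perfectly consistent with an odd-weight primitive of Hodge type $\{(a_0+1,a_0),(a_0,a_0+1)\}$. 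To be fair, the paper states this final claim only as an ``in particular'' without an explicit argument, so you are no worse off than the source; but as written your sketch does not close it, and some input beyond \eqref{SE:cp}, HR2 and Lemma \ref{L:c_orb} is required.
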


\noindent The Deligne splitting is pictured in Figure \ref{f:c_orb}.b.  The circled nodes indicate those $I^{p,q}_\fg$ with $p-q = \pm2$ that \emph{may} (but need not) have a non--trivial primitive component.  (There are no constraints on the primitive components with $p-p\not=\pm2$, aside from the obvious $I^{p,-p}_\fg = I^{p,-p}_{\fg,\tprim}$ for all $|p|\ge 2$.)

\begin{proof}
Lemma \ref{L:c_orb} implies \eqref{E:cp1}.  Assertion \eqref{E:cp2} follows from the the property $N : \fg^{p,q} \to \fg^{p-1,q-1}$ and the fact that the $N$--strings are uninterrupted.  Given this, it follows that  
$$
  \Gamma \ = \ \bigoplus_{p\ge2} (\fg^{-p,p}\op\fg^{p,-p} ) \ \subset \ 
  \tGr_{0,\tprim}^{W_\sb(N)_\fg} \,.
$$
Recall (Section \ref{S:NO+MHS}) that $ \tGr_{0,\tprim}^{W_\sb(N)_\fg}$ carries a weight zero Hodge structure polarized by $Q_\fg$.  So given a nonzero $v \in \fg^{-p,p} \subset \Gamma$, we have $(-1)^{p}Q_\fg(v,\bar v) > 0$.  By \eqref{E:pq_roots}, $\Gamma$ is a direct sum of root spaces.  Moreover, Lemma \ref{L:c_orb} asserts that the roots are imaginary and compact.  Recall that $-Q_\fg$ is the Killing form.  Therefore, given a nonzero root vector $v \in \fg^\a \subset \fg^{-p,p}$, the inequality $(-1)^{p}Q_\fg(v,\bar v ) > 0$ implies that $\a$ is compact if $p$ is even and noncompact if $p$ is odd.  Therefore, $p$ is necessarily even; this establishes \eqref{E:cp3}.

Any $N$--string in $\op_{q-p=2}\,\fg^{p,q}$ is necessarily of the form 
$$
  u \,,\ Nu \,,\ N^2 u\,,\ldots,\ N^{2k} u \,,
$$
with $0\not=u \in \fg^{k-1,k+1}$, $N^k u \in \fg^{-1,1}$ and $N^{2k+1}u = 0$.  By the polarization hypothesis, 
$$
  0 \ < \ -Q_\fg( u , N^{2k} \bar u) \ = \ (-1)^{k+1} Q_\fg (N^k u , N^k \bar u) \,.
$$
Lemma \ref{L:c_orb} implies $0\not=v = N^k u \in \fg^{-1,1}$ satisfies $Q_\fg(v,\bar v) >0$.  Therefore $k$ is odd, and the length of the $N$--string is $2k+1 = 4\ell+3 \equiv 3$ mod 4.
\end{proof}

%------------------------------------------------------------------------------
\subsection{The Hodge--Tate case} \label{S:HT}
%------------------------------------------------------------------------------

The polarized limiting mixed Hodge structure $(V,W_\sb(N),F^\sb)$ is \emph{Hodge--Tate} if 
\begin{equation}\label{E:HT}
  I^{p,q} \ = \ 0 \quad\hbox{for all}\quad p\not=q\,.
\end{equation}
Given that the $N$--strings are uninterrupted and centered on the line $p=-q$, we obtain the following corollary to Theorem \ref{T:I7}:

\begin{corollary}\label{C:TR}
If a limiting mixed Hodge structure $(V,W_\sb(N),F^\sb)$ polarizes a totally real (and necessarily closed) $G_\bR$--orbit, then the limiting mixed Hodge structure is of Hodge--Tate type.
\end{corollary}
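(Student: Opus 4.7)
The plan is to show that the induced adjoint limiting mixed Hodge structure $(\fg,W_\sb(N)_\fg,F^\sb_\fg)$ is of Hodge--Tate type and then invoke Proposition~\ref{P:I5} to transfer this conclusion to $(V,W_\sb(N),F^\sb)$. The two ingredients are the totally real hypothesis on the orbit $\cO$ and the $\fsl_2$--representation theory of the triple \eqref{E:sl2trip} containing $N$, which forces the $N$--strings on $\fg_\bC$ to be uninterrupted and symmetric about the antidiagonal $p+q=0$.

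First I would translate the totally real condition. Using the identification $I^{p,q}_\fg = \fg^{q,p}$ in \eqref{E:Ivg} and the description of the CR--tangent space in \eqref{E:TNsp}, the vanishing $T^\mathrm{CR}_{F^\sb_\infty}\cO = 0$ forces $\fg^{a,b}=0$ whenever $a>0$ and $b\le 0$, and (by complex conjugation) whenever $a\le 0$ and $b>0$. Transported back to the Deligne splitting of $\fg_\bC$, a nonzero $I^{p,q}_\fg$ therefore requires $(p,q)$ to lie either in the open upper--right quadrant $\{p>0,\,q>0\}$ or in the closed lower--left quadrant $\{p\le 0,\,q\le 0\}$.

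Next I argue by contradiction. Suppose $I^{p,q}_\fg\ne 0$ for some $p\ne q$. Since the limiting mixed Hodge structure is assumed $\bR$--split we have $\overline{I^{p,q}_\fg}=I^{q,p}_\fg$, and we may take $p>q$. Decomposing $\fg_\bC$ under the $\fsl_2$--triple containing $N$, there exist an integer $j_0\ge 0$ with $p+q+2j_0\ge 0$ and a nonzero primitive vector $u\in I^{p+j_0,\,q+j_0}_\fg$. Applying the quadrant constraint to $(p+j_0,\,q+j_0)$ rules out the lower--left alternative, since combined with $p+q+2j_0\ge 0$ it would force $p+j_0=q+j_0=0$, and hence the forbidden equality $p=q$. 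So $p+j_0>0$ and $q+j_0>0$. The uninterrupted $\fsl_2$--string
\[
u,\,Nu,\,N^2u,\,\ldots,\,N^{p+q+2j_0}u
\]
consists entirely of nonzero vectors, and at exponent $j=q+j_0$ (which lies in the admissible range because $p+j_0\ge 0$) this string reaches $I^{p-q,\,0}_\fg = \fg^{0,\,p-q}$. Since $p-q>0$, the first paragraph forces this space to vanish, contradicting $N^{q+j_0}u\ne 0$. Hence $I^{p,q}_\fg=0$ for every $p\ne q$, the adjoint limiting mixed Hodge structure is Hodge--Tate, and Proposition~\ref{P:I5} completes the proof.

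The delicate point is the assertion that the primitive generator $u$ necessarily lies in the open upper--right quadrant; otherwise the $N$--string would never reach the axis point $(p-q,0)$ where the totally real constraint is violated. This is handled by pairing the weight inequality $p+q+2j_0\ge 0$ (forced by primitivity) against the quadrant dichotomy (forced by the totally real hypothesis), as above. Everything else in the argument is routine bookkeeping on the $\fsl_2$--string generated by $u$.
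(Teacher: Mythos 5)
Your proof is correct. The mechanism you exploit --- an uninterrupted $\fsl_2$--string through a primitive vector is centered on the antidiagonal $p+q=0$, hence must pass through a bigraded piece that the totally real hypothesis forces to vanish --- is exactly the idea the paper gestures at (``the $N$--strings are uninterrupted and centered on the line $p=-q$''). The organizational difference is that the paper derives the corollary from Theorem \ref{T:I7} (i.e.\ Theorem \ref{T:cp_orb}), whose proof already uses the polarization form $Q_\fg$ and compact/noncompact root parity to pare the adjoint splitting down to Figure \ref{f:c_orb}.b before the total-reality condition finishes the job; you instead use only the quadrant constraint $T^{\mathrm{CR}}\cO=0$ from \eqref{E:TNsp}--\eqref{E:Ivg}, show directly that any off-diagonal $I^{p,q}_\fg$ would force a nonzero $N^{q+j_0}u\in I^{p-q,0}_\fg=\fg^{0,p-q}$ in the forbidden region, and then transfer to $V$ via Proposition \ref{P:I5} (legitimate, since that proposition is an equivalence proved independently of this corollary). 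Your route is thus a bit more self-contained and elementary --- it needs neither the polarization parity arguments of Theorem \ref{T:cp_orb} nor Lemma \ref{L:c_orb} --- at the cost of invoking Proposition \ref{P:I5} to come back to $V$. The bookkeeping is sound: primitivity gives $p+q+2j_0\ge 0$, the quadrant dichotomy excludes the closed lower-left alternative for $(p+j_0,q+j_0)$ precisely because $p\ne q$, and the exponent $q+j_0$ lies in $[0,\,p+q+2j_0]$ because both $p+j_0$ and $q+j_0$ are positive.
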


We will now recall and prove 

\begin{propI5} %\label{P:HT}
The polarized limiting mixed Hodge structure $(V,W_\sb(N),F^\sb)$ is Hodge--Tate if and only if the induced limiting mixed Hodge structure $(\fg,W_\sb(N)_\fg,F^\sb_\fg)$ is Hodge--Tate.
\end{propI5}

\noindent Together Corollary \ref{C:TR} and Proposition \ref{P:I5} yield the second half of Theorem \ref{T:I6}.  Proposition \ref{P:I5} is a consequence of the following lemma.

\begin{lemma} \label{L:HT}
Fix a real form $\fg_\bR$ and let $\fh \subset \fg_\bC$ be a Cartan subalgebra that is stable under complex conjugation.  Let $L \in \fh$ be any grading element.  Let $V$ be a representation of $\fg_\bC$, and let $\Lambda(V) \subset \wtL$ denote the weights of $V$.  Suppose that $\lambda(L) = \lambda(\bar L)$ for all $\lambda \in \Lambda(V)$.  (Equivalently, $\lambda(L) = \bar\lambda(L)$.)  Then $\mu(L) = \mu(\bar L)$ for the all weights $\mu \in \wtL$ of $\fg_\bC$.
\end{lemma}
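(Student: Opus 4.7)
The plan is to translate the hypothesis and conclusion into a statement about the single vector $H := L - \bar L \in \fh$. The hypothesis becomes $\lambda(H) = 0$ for every $\lambda \in \Lambda(V)$, while the desired conclusion is $\mu(H) = 0$ for every $\mu \in \wtL$. Because $\fg_\bC$ is semisimple, the weight lattice is contained in $\rtL \otimes \bQ$, so it suffices to establish $\a(H) = 0$ for each root $\a \in \Delta$; the general statement for $\mu \in \wtL$ then follows by $\bQ$-linearity.

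To handle a fixed root $\a$, I would exploit the elementary fact that a root vector $e_\a \in \fg^\a$ shifts weight spaces, $e_\a \cdot V_\lambda \subseteq V_{\lambda + \a}$, together with the faithfulness of $V$ — which holds in the intended application to Proposition \ref{P:I5}, since $V$ is a Hodge representation of the Mumford--Tate group. Faithfulness guarantees that any chosen nonzero $e_\a$ acts nontrivially on $V$, so that there exists some $\lambda \in \Lambda(V)$ with $e_\a V_\lambda \ne 0$. This forces $\lambda + \a \in \Lambda(V)$ as well, and then applying the hypothesis to the pair $\lambda, \lambda + \a$ and subtracting yields $\a(H) = 0$.

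Ranging over all roots and extending by $\bQ$-linearity to all of $\wtL$ completes the argument. The only subtle point is the appeal to faithfulness: if $V$ is not faithful, the argument only controls those roots whose root space does not annihilate $V$, and one would first need to pass to the quotient $\fg_\bC / \tker \rho$ (an ideal, hence a direct summand in the semisimple $\fg_\bC$) and reinterpret ``weights of $\fg_\bC$'' as weights of the effective part. In the Hodge-theoretic applications this subtlety does not arise, so the argument should be quite short.
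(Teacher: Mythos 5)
Your argument is correct and matches the paper's in substance: the paper's entire proof is the one-line observation that $\wtL = \tspan_\bQ\,\Lambda(V)$, of which your reduction to roots and the difference-of-weights argument is essentially a spelled-out verification. Your remark about faithfulness is apt --- the paper's spanning claim likewise implicitly requires that $V$ not annihilate a simple factor of $\fg_\bC$, which holds in the intended Hodge-theoretic application.
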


\begin{proof}[Proof of Lemma \ref{L:HT}]
The weight lattice $\wtL \subset \fh^*$ is spanned over $\bQ$ by the weights of $V$; that is, $\wtL = \tspan_\bQ\,\Lambda(V)$.
\end{proof}

\begin{proof}[Proof of Proposition \ref{P:I5}]
The nontrivial $I^{p,q}$ are precisely those with $(p,q) = (\lambda(L) , \lambda(\bar L))$ for some $\lambda \in \Lambda(V)$.
\end{proof}

\begin{proof}[Proof of Proposition \ref{P:I9}]
Follows directly from Proposition \ref{P:I5} and Lemma \ref{L:c_orb}.
\end{proof}

%\begin{propI9} 
%If the limiting mixed Hodge structure is of Hodge--Tate type, then the limit period is a point in the closed orbit.
%\end{propI9}

There are a number of conditions that a Mumford--Tate domain $D$ must satisfy in order to admit a Hodge--Tate degeneration.  Some of the conditions are Hodge--theoretic in nature (Lemma \ref{L:HT-HN}), while others are representation theoretic (Lemma \ref{L:JMP} and Remark \ref{R:HTinG/B}).\footnote{With respect to the latter we present only an illustrative sketch.  The general story will be discussed in a later work.}

Given weight $n$, set
\begin{subequations} \label{SE:m}
\begin{equation}
  m \ = \ \lfloor n/2 \rfloor \,;
\end{equation}
that is, $m$ is defined by 
\begin{equation}
  n \ \in \ \{ 2m , 2m+1 \} \,.
\end{equation}
\end{subequations}

\begin{lemma} \label{L:HT-HN}
Let $D$ be a Mumford--Tate domain for polarized Hodge structures of weight $n$ and with $\bh = (h^{n,0} , h^{n-1,1} , \ldots , h^{1,n-1} , h^{0,n})$.  If a point of $D$ admits a Hodge--Tate degeneration, then the Hodge numbers satisfy
\begin{equation}\label{E:HT-HN}
  h^{n,0} \ \le \ h^{n-1,1} \ \le \cdots \le h^{n-m,m} \,.
\end{equation}
\end{lemma}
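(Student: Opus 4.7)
\noindent\emph{Proof plan.} The plan is to reduce the inequalities \eqref{E:HT-HN} to the unimodality of the graded dimensions $\dim\tGr^{W_\sb(N)}_k$ that is built into any $\fsl_2$--decomposition of a limiting mixed Hodge structure.

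I would first fix a Hodge--Tate limiting mixed Hodge structure $(V,W_\sb(N),F^\sb)$ supported at the given point of $D$. Because all off--diagonal Deligne pieces vanish, the identity \eqref{E:delWF} collapses to $F^p=\bigoplus_{q\ge p} I^{q,q}$, and $\tGr^{W_\sb(N)}_k$ vanishes unless $k$ is even, in which case $\tGr^{W_\sb(N)}_{2p}=I^{p,p}$. Since the flag dimensions $f^p=\dim F^p$ are constant on the compact dual $\check D$, they agree with the flag dimensions of any polarized Hodge structure in $D$, so $f^p=\sum_{p'\ge p}h^{p',n-p'}$. Writing $i^{p,p}:=\dim I^{p,p}$, taking successive differences gives the key identification
\[
  h^{p,n-p} \;=\; f^p - f^{p+1} \;=\; i^{p,p} \;=\; \dim\tGr^{W_\sb(N)}_{2p}.
\]

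Next I would complete $N$ to an $\fsl_2$--triple as in \eqref{E:sl2trip} and use the resulting hard Lefschetz isomorphisms $N^k\colon\tGr^{W_\sb(N)}_{n+k}\xrightarrow{\simeq}\tGr^{W_\sb(N)}_{n-k}$ together with the primitive decomposition to obtain, for $k\ge 0$,
\[
  \dim\tGr^{W_\sb(N)}_{n+k} \;=\;
  \sum_{\substack{\ell\ge k\\ \ell\equiv k\;(\mathrm{mod}\;2)}}
  \dim\tGr^{W_\sb(N)}_{n+\ell,\tprim},
\]
so that $\dim\tGr^{W_\sb(N)}_{n+k}$ is non--increasing in $k\ge 0$ within each parity class.

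Finally, the inequality $h^{n-k,k}\le h^{n-k-1,k+1}$ for $0\le k\le m-1$ translates, via the identification of the first step, into $\dim\tGr^{W_\sb(N)}_{n+j}\le\dim\tGr^{W_\sb(N)}_{n+j-2}$ with $j=n-2k$. The constraint $k\le m-1$ forces $j\ge 2$ in both parities of $n$ (indeed $j=n-2k\ge n-2(m-1)\in\{2,3\}$), and $j$ and $j-2$ share parity, so this is exactly the unimodality just established. There is no serious obstacle; the only point requiring a little care is the reduction $h^{p,n-p}=i^{p,p}$, for which the collapse $I^{q,\sb}=I^{q,q}$ of the Deligne splitting in the Hodge--Tate case is essential.
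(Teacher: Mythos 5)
Your proposal is correct and follows essentially the same route as the paper: the paper draws the Hodge--Tate limiting mixed Hodge structure as a union of $N$--strings centered at weight $n$ and reads off $i^{n,n}\le i^{n-1,n-1}\le\cdots\le i^{n-m,n-m}$ together with $h^{p,n-p}=i^{p,p}$, which is exactly the unimodality you derive from the Lefschetz/primitive decomposition. You merely make explicit the two steps the paper leaves as "simple consequences of this picture."
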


\noindent The converse to Lemma \ref{L:HT-HN} holds when $D$ is a period domain (Theorem \ref{T:HT-PD}), but fails for Mumford--Tate domains in general (Remark \ref{R:HT-nPD}).

\begin{proof}
If there is a Hodge--Tate degeneration of a point in $D$, then the limiting mixed Hodge structure looks like
\begin{equation} \label{E:6}
\renewcommand{\arraystretch}{1.3}
\begin{array}{rcl}
  H^0(n) \ \to \ H^0(n-1)  \ \to \ H^0(n-2) & \to \cdots \to & 
  H^0(2) \ \to \ H^0(1) \ \to \ H^0_0 \\
  H^0_2(n-1)  \ \to \ H^0_2(n-2) & \to \cdots \to & 
  H^0_2(2) \ \to \ H^0_2(1) \\
  H^0_4(n-2) & \to \cdots \to & 
  H^0_4(2) \\
  & \vdots & 
\end{array}\end{equation}
where $H^0_{2k}$ is a Hodge--Tate structure of weight zero.  The inequalities \eqref{E:HT-HN} are then simple consequences of this picture: the Deligne splitting must satisfy
\begin{equation}\nonumber
  i^{n,n} \ \le \ i^{n-1,n-1} \ \le \cdots \le 
  i^{n-m,n-m} \,.
\end{equation}
Since $h^{p,q} = i^{p,n-q}$, this yields \eqref{E:HT-HN}.

For example, when $n=4$, the $I^{p,q}$ picture of the Hodge--Tate limiting mixed Hodge structure is 
\begin{center}
\setlength{\unitlength}{10pt}
\begin{picture}(30,5)
\put(0,0){\vector(0,1){5}}\put(0,0){\vector(1,0){5}}
\multiput(0,0)(1,1){5}{\circle*{0.3}}
\multiput(1,1)(1,1){3}{\circle{0.55}}
\put(2,2){\circle{0.85}}
\put(7,2){and $\ h^{4,0} = i^{4,4} \ \le \ h^{3,1} = i^{3,3} \ \le \ h^{2,2} = i^{2,2}$.}
\end{picture}
\end{center}
\end{proof}

A second condition that $D$ must satisfy to admit a Hodge--Tate degeneration is 

\begin{lemma} \label{L:JMP}
If a Mumford--Tate domain $D \subset \check D = G_\bC/P$ admits a Hodge--Tate degeneration, then it is necessarily the case that $\fp$ is an even Jacobson--Morosov parabolic \emph{(Remark \ref{R:JMP})}.
\end{lemma}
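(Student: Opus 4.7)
The plan is to exploit the Deligne splitting directly, identifying the parabolic $\fp$ with $W_0(N)_\fg$ via the Hodge--Tate hypothesis.

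First, I would reduce to the statement about $\fg$.  A Hodge--Tate degeneration produces a limiting mixed Hodge structure $(V,W_\sb(N),F^\sb)$ with $I^{p,q}=0$ for $p\not=q$, whose associated reduced limit period $F^\sb_\infty$ lies in the closed orbit by Proposition \ref{P:I9}.  By Proposition \ref{P:I5} the induced adjoint limiting mixed Hodge structure $(\fg,W_\sb(N)_\fg,F^\sb_\fg)$ is also Hodge--Tate, so in the Deligne splitting $\fg_\bC = \bigoplus I^{p,q}_\fg$ only the diagonal terms $I^{p,p}_\fg$ are nonzero.  The parabolic $P \subset G_\bC$ is the stabilizer of $F^\sb_\infty$, and its Lie algebra is $\fp = F^0_{\fg,\infty}$.

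Next I would read off $\fp$ from the Deligne splitting.  Applying \eqref{E:F4} to the weight--zero limiting mixed Hodge structure on $\fg$ gives $F^p_{\fg,\infty} = \bigoplus_{q\le -p} I^{\sb,q}_\fg$, and the Hodge--Tate condition collapses this to
\[
  \fp \ = \ F^0_{\fg,\infty} \ = \ \bigoplus_{q\le 0} I^{q,q}_\fg\,.
\]
On the other hand, the first relation in \eqref{E:delWF} (applied to the adjoint limiting mixed Hodge structure) yields
\[
  W_k(N)_\fg \ = \ \bigoplus_{p+q\le k} I^{p,q}_\fg \ = \ \bigoplus_{2q\le k} I^{q,q}_\fg\,,
\]
so in particular $W_0(N)_\fg = \bigoplus_{q\le 0} I^{q,q}_\fg = \fp$.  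Thus $\fp$ \emph{is} a Jacobson--Morosov parabolic in the sense of Remark \ref{R:JMP}.

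Finally I would verify the evenness.  By \eqref{E:Wk(N)} applied to $\fg$, we have $\tGr^{W_\sb(N)_\fg}_k \simeq \fg_k$, the $k$--eigenspace of the neutral element $Y$ of the $\fsl_2$--triple completing $N$.  The computation above shows $\tGr^{W_\sb(N)_\fg}_k = I^{k/2,k/2}_\fg$, which vanishes unless $k$ is even.  Hence every $Y$--eigenvalue on $\fg$ is even, so $\a_i(Y) \in 2\bZ$ for every simple root, and $\fp$ is an even Jacobson--Morosov parabolic.  I do not anticipate a serious obstacle: the only non--bookkeeping input is Proposition \ref{P:I5}, which is used to transfer the Hodge--Tate property from $V$ to $\fg$; once on $\fg$, the identification $\fp = W_0(N)_\fg$ and the evenness both fall out of collapsing the Deligne bigrading to its diagonal.
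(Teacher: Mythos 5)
Your argument is correct and essentially identical to the paper's: both transfer the Hodge--Tate condition to the adjoint limiting mixed Hodge structure via Proposition \ref{P:I5}, identify $\fp$ with a weight filtration of the diagonal Deligne splitting, and read off evenness from the fact that $Y$ acts on $I^{p,p}_\fg$ by the even scalar $2p$. The only cosmetic difference is that you take the stabilizer of $F^\sb_\infty$ and obtain $W_0(N)_\fg$, whereas the paper takes the stabilizer of the limiting filtration $F^\sb_\fg$ and obtains the conjugate parabolic $W_0(N^+)_\fg$, with $N^+$ the nilpositive element of the $\fsl_2$--triple.
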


\begin{example}[Even Jacobson--Morosov parabolics in $G_2$] \label{eg:JM_G2}
The exceptional simple Lie algebra $\fg_\bC = \fg_2$ has three conjugacy classes of (proper) parabolic subalgebras.  Each may be realized as Jacobson--Morosov parabolics, but only two are even; they are indexed by $\Sigma = \{ \a_2 \}$ (the simple root $\a_2$ is long) and $\Sigma = \{ \a_1,\a_2\}$ (indexing the Borel), \cf\cite[Section 8.4]{MR1251060}.  In these two cases there exist Mumford--Tate domains $D \subset G_\bC/P$, homogeneous with respect to the split real form of $G_2$, admitting Hodge--Tate degenerations, \cf\cite[Section 6.1.3]{KP2013}.
\end{example}

\begin{proof}
Suppose that $(V,W_\sb(N),F^\sb)$ is Hodge--Tate limiting mixed Hodge structure for the period domain $D$.  Then the induced limiting mixed Hodge structure $(\fg,W_\sb(N)_\fg,F^\sb_\fg)$ is also Hodge--Tate (Proposition \ref{P:I5}).  Therefore the Lie algebra of the parabolic stabilizing $F^\sb$ is
\[
  \fp \ = \ F^0_\fg \ \stackrel{\eqref{E:delWF}}{=} 
  \ \bigoplus_{p\ge 0} I^{p,\sb}_\fg
  \ \stackrel{\eqref{E:HT}}{=} \ \bigoplus_{p \ge 0} I^{p,p}_\fg
  \ \stackrel{\eqref{E:delWF}}{=} \ W_0(N^+)_\fg
\]
where $N^+$ is the nilpositive element of an $\fsl_2$--triple \eqref{E:sl2trip} containing $N$.  It follows that $\fp$ is a Jacobson--Morosov parabolic.  Moreover, since the neutral element $Y$ acts on $I^{p,p}_\fg$ by the even scalar $2p$, it follows that $\fp$ is an even Jacobson--Morosov parabolic.
\end{proof}

\begin{remark}[Hodge--Tate degenerations in flag varieties $\check D = G_\bC/B$] \label{R:HTinG/B}
Lemma \ref{L:JMP} imposes constraints on the compact duals without reference to the real form $G_\bR$.  Once we pick a Jacobson--Morosov parabolic $P \subset G_\bC$, there are constraints on the real forms $G_\bR$ that may arise as Mumford--Tate groups with Hodge representation admitting a Hodge--Tate degeneration.  To illustrate this we consider the case that $G_\bC$ is one of the classical groups and $P = B$ is the Borel.  First a few general remarks.

The set $\cN \subset \fg_\bC$ of nilpotent endomorphisms decomposes into a finite union of $\tAd(G_\bC)$--orbits.  Moreover, there is a unique open orbit $\cN_\mathrm{prin}$, elements of which are the principal nilpotents.  A Jacobson--Morosov parabolic $W_0(N)$ is even and a Borel subalgebra if and only if $N$ is principal \cite[Section 4.1]{MR1251060}.  In the case that $G_\bC$ is one of the classical simple Lie groups, the principal nilpotents are nicely characterized by the decomposition of the standard representation under an $\fsl_2 \subset \fg_\bC$.  To be precise, given any nilpotent $N \in \cN$, let $\fsl_2 \subset \fg_\bC$ be the Lie subalgebra spanned by an $\fsl_2$--triple \eqref{E:sl2trip} containing $N$.  Then:
\begin{i_list}
\item
Let $V_\bC = \bC^n$ be the standard representation of any one of $\fsl_n\bC$, $\fsp_{2m}\bC$ with $n=2m$, or $\fso_{2m+1}\bC$ with $n = 2m+1$.  Then the nilpotent $N$ is principal if and only $V_\bC$ is irreducible as an $\fsl_2$--module.
\item
If $G_\bC = \tSO_{2m}\bC$, then $N$ is principal if and only if the standard representation $V_\bC = \bC^{2m}$ decomposes as $\bC \op \bC^{2m-1}$ (a trivial subrepresentation plus an irreducible subrepresentation).
\end{i_list}

The challenge to the real form $G_\bR$ is that:
\begin{quote}
\emph{An open $G_\bR$--orbit $D \subset G_\bC/B$ can admit the structure of a Mumford--Tate domain with a Hodge--Tate degeneration only if $\fg_\bR \cap \cN_\mathrm{prin}$ is nonempty.}
\end{quote}
From the classification \cite[Section 9.3]{MR1251060} of $\tAd(G_\bR)$--orbits of nilpotent elements in $\fg_\bR$ we deduce that $\fg_\bR \cap \cN_\mathrm{prin}$ is nonempty only for the following real forms (in the classical case):
\[
  \fsu(m,m) \,,\ \fsu(m,m+1) \,;\quad
  \fsp(n,\bR) \,;\quad
  \fso(m,m)\,,\ \fso(m+1,m) \,, \fso(m+2,m) \,.
\]
The classification also determines the open $G_\bR$--orbits $D \subset G_\bC/B$:
\begin{a_list}
\item
Suppose that $\fg_\bR = \fsp(n,\bR) = \tEnd(\bR^{2n},Q)$.  From the classification \cite[Theorem 9.3.5]{MR1251060} we may draw the following conclusions.  The intersection $\fg_\bR \cap \cN_\mathrm{prin}$ consists of two $\tAd(G_\bR)$--orbits $\cN_{\mathrm{prin},\bR}^+$ and $\cN_{\mathrm{prin},\bR}^-$.  Given $N \in \cN_{\mathrm{prin},\bR}^\pm$, there exists $v \in \bR^{2n}$ such that $\{ v , Nv , \ldots , N^{2n-1}v \}$ is a basis of $\bR^{2n}$ and 
\[
  \pm Q(N^av,N^bv) = (-1)^a \d^{a+b}_{2n-1} \,.
\]
So we see that, if $N \in \cN_{\mathrm{prin},\bR}^+$ and 
\begin{equation} \label{E:egFp}
  F^p \ = \ \tspan_\bC\{ N^a v \ : \ a \ge p \}
\end{equation}
then $(\bC^{2n},W_\sb(N),F^\sb)$ is a limiting mixed Hodge structure for the period domain parameterizing weight $2n-1$, $Q$--polarized Hodge structures with Hodge numbers $\bh = (1,1,\ldots,1)$.  Note that 
\[
  I^{p,p} \ = \ \tspan\{ N^{2n-1-p} v\} \,.
\]
\item 
In the case that $\fg_\bR = \fso(m+1,m) = \tEnd(\bR^{2m+1},Q)$, the classification \cite[Theorem 9.3.4]{MR1251060} asserts that the intersection $\fg_\bR \cap \cN_\mathrm{prin}$ consists of one $\tAd(G_\bR)$--orbit $\cN_{\mathrm{prin},\bR}$, and given an element $N$ in that orbit, $\bR^{2m+1}$ admits a basis $\{ v , Nv , \ldots , N^{2m}v\}$ such that 
\[
  (-1)^m Q( N^a v , N^b v ) \ = \ (-1)^a \d^{a+b}_{2m} \,.
\]
Taking again the definition \eqref{E:egFp}, we see that $(\bC^{2n} , W_\sb(N),F^\sb)$ is a limiting mixed Hodge structure for the period domain parametrizing weight $2m$, $(-1)^mQ$--polarized Hodge structures with Hodge numbers $\bh = (1,\ldots,1)$.
\item 
In the case that $\fg_\bR = \fso(m,m) = \tEnd(\bR^{2m},Q)$, the classification \cite[Theorem 9.3.4]{MR1251060} asserts that the intersection $\fg_\bR \cap \cN_\mathrm{prin}$ consists of two $\tAd(G_\bR)$--orbits $\cN_{\mathrm{prin},\bR}^+$ and $\cN_{\mathrm{prin},\bR}^-$.  Given $N \in \cN_{\mathrm{prin},\bR}^\pm$ there exists a $Q$--orthogonal decomposition $\bR^{2m} = \bR \op \bR^{2m-1}$ into $\fsl_2$--submodules admitting bases $\{ w \}$ and $\{ v , Nv , \ldots , N^{2m-2}\}$ such that 
\[
  \pm Q(w,w) \ = \ 1 \tand
  \pm Q(N^av,N^bv) \ = \ (-1)^{m+a}\d^{a+b}_{2m-2} \,.
\]
Let 
\begin{equation} \label{E:egFp2}
\renewcommand{\arraystretch}{1.2}
\begin{array}{rcl}
  F^p & = & \tspan\{ N^a v \ : \ a \ge p \} \,, \quad p \ge m \,,\\
  F^p & = & \tspan\{ w \,,\, N^a v \ : \ a \ge p \} \,,\quad p \le m-1 \,.
\end{array}
\end{equation}
If $m$ is even and $N \in \cN_{\mathrm{prin},\bR}^+$, then $(\bC^{2m},W_\sb(N) , F^\sb)$ is a limiting mixed Hodge structure for the period domain parametrizing weight $2m-2$, $Q$--polarized Hodge structures with Hodge numbers $\bh = (1,\ldots,1,2,1\ldots,1)$.  Note that 
\[
  I^{p,p} \ = \ \left\{ \begin{array}{ll}
    \tspan\{ N^{2m-2-p} v\} \,,\ & p \not= m-1\,,\\
    \tspan\{ w , N^{m-1}v \} \,,\ & p = m-1 \,.
  \end{array}\right.
\]
\item 
In the case that $\fg_\bR = \fso(m+2,m) = \tEnd(\bR^{2m},Q)$, the classification \cite[Theorem 9.3.4]{MR1251060} asserts that the intersection $\fg_\bR \cap \cN_\mathrm{prin}$ consists of a single $\tAd(G_\bR)$--orbit $\cN_{\mathrm{prin},\bR}$.  Given $N \in \cN_{\mathrm{prin},\bR}$ there exists a $Q$--orthogonal decomposition $\bR^{2m} = \bR \op \bR^{2m-1}$ into $\fsl_2$--submodules admitting bases $\{ w \}$ and $\{ v , Nv , \ldots , N^{2m-2}\}$ such that 
\[
  Q(w,w) \ = \ 1 \tand
  Q(N^av,N^bv) \ = \ (-1)^{m+a}\d^{a+b}_{2m} \,.
\]
Taking \eqref{E:egFp2} as above, we see that if $m$ is even, then $(\bC^{2m},W_\sb(N) , F^\sb)$ is a limiting mixed Hodge structure for the period domain parametrizing weight $2m-2$, $Q$--polarized Hodge structures with Hodge numbers $\bh = (1,\ldots,1,2,1\ldots,1)$.
\end{a_list}
\end{remark}

%------------------------------------------------------------------------------
\subsection{Hodge--Tate degenerations in period domains}
%------------------------------------------------------------------------------

In the event that $D$ is a period domain, the converse to Lemma \ref{L:HT-HN} holds.

\begin{theorem} \label{T:HT-PD}
Let $D$ be the period domain for polarized Hodge structures of weight $n$ and with Hodge numbers $\bh = (h^{n,0} , h^{n-1,1} , \ldots , h^{1,n-1} , h^{0,n})$.  If \eqref{E:HT-HN} holds, then a point of $D$ admits a Hodge--Tate degeneration.
\end{theorem}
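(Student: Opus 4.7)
The plan is to construct an $\bR$--split Hodge--Tate polarized limiting mixed Hodge structure (LMHS) $(V,W_\sb(N),F^\sb)$ whose associated graded is a polarized Hodge structure with the prescribed Hodge numbers $\bh$; by \eqref{E:CKS} this LMHS produces a nilpotent orbit $z\mapsto e^{zN}\cdot F^\sb$ meeting $D$ for $\tIm(z)\gg 0$, and any such point of $D$ is then a point of $D$ admitting a Hodge--Tate degeneration in the sense of the theorem. The construction is by orthogonal direct sum of ``building blocks,'' each of which is an irreducible $\fsl_2$--representation.

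For $j=0,1,\ldots,m$, set
\[
  \mu_j \ = \ h^{n-j,j} - h^{n-j+1,j-1}
\]
(with the convention $h^{n+1,-1}=0$); the hypothesis \eqref{E:HT-HN} together with Hodge symmetry gives $\mu_j\ge 0$. Let $B_j$ denote the irreducible $\fsl_2(\bR)$--module of dimension $n-2j+1$, equipped with an $\fsl_2$--triple $\{N_j,Y_j,N_j^+\}$ and a highest--weight generator $v_j$ satisfying $Y_j v_j=(n-2j)\, v_j$. Put
\[
  V_\bR \ = \ \bigoplus_{j=0}^m B_j^{\op \mu_j}\,,\qquad
  N \ = \ \bigoplus_j N_j\,,\qquad
  Y \ = \ \bigoplus_j Y_j\,,
\]
and declare $I^{p,p}\subset V_\bC$ to be the $(2p-n)$--eigenspace of $Y$. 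Each copy of $B_{j,\bC}$ contributes the vector $N_j^{\,n-j-p}v_j$ to $I^{p,p}$ exactly when $j\le p\le n-j$, so $\tdim\,I^{p,p}$ equals $\sum_{j\le\min(p,\,n-p)}\mu_j$, which telescopes (using Hodge symmetry in the range $p\le m$) to $h^{p,n-p}$. Define
\[
  F^p \ = \ \bigoplus_{r\ge p}I^{r,r}\,,\qquad W_\ell(N) \ = \ \bigoplus_{2p\le\ell}I^{p,p}\,.
\]
One checks immediately that $W_\sb(N)$ is the monodromy weight filtration of $N$, that $(V,W_\sb(N),F^\sb)$ is $\bR$--split of Hodge--Tate type, and that its graded Hodge numbers are $\bh$.

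To polarize, let $Q_{(j)}$ be any nonzero $\fsl_2$--invariant bilinear form on $B_j$; such a form is unique up to scale and is $(-1)^{n-2j}=(-1)^n$--symmetric, so $Q:=\bigoplus_j Q_{(j)}$ has the parity required in weight $n$. Rescale each $Q_{(j)}$ so that $\epsilon_{n-2j}\,Q(v_j, N_j^{n-2j} v_j)>0$. Since the only primitive class of the block $B_j$ is $v_j$, which is of type $(n-j,n-j)$ (so the Weil operator acts on it as the identity), this single positivity is precisely the HR2 inequality on $\tGr^{W_\sb(N)}_{2(n-j),\tprim}$ for the $B_j$--block; different blocks lie in different weights and are $Q$--orthogonal, so the HR2 inequalities decouple block by block. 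HR1 is automatic, since the $Y$--eigenvalues in $F^p$ and in $F^{n-p+1}$ cannot be opposite. Thus $(V,Q,W_\sb(N),F^\sb)$ is a polarized LMHS, and \eqref{E:CKS} yields the desired nilpotent orbit in $D$.

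The main technical point is to verify that the pair $(V_\bR, Q)$ constructed above is isomorphic to the ambient vector space and form of the given period domain $D$; both are determined up to $G_\bR$--isomorphism by $\tdim\,V$, the parity of $Q$, and, when $n$ is even, the signature of $Q_\bR$. The signature of the form we built is fixed by the sign normalizations of the previous paragraph and agrees with that which any weight $n$, $\bh$--polarized Hodge structure must have, by the standard signature count across the Hodge decomposition. Once this identification is made, $e^{\bi y N}\cdot F^\sb \in D$ for $y \gg 0$ yields the required point of $D$ admitting a Hodge--Tate degeneration.
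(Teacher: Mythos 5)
Your proof is correct and follows essentially the same route as the paper's: both build the Hodge--Tate LMHS as an orthogonal direct sum of $N$--strings of length $n-2j+1$ taken with the telescoping multiplicities $h^{n-j,j}-h^{n-j+1,j-1}$, define $F^\sb$ and $W_\sb(N)$ by $Y$--eigenvalues, and polarize block by block (the paper writes the form explicitly in a basis, while you invoke uniqueness of the $\fsl_2$--invariant form plus a sign normalization — a cosmetic difference). Your closing remark identifying $(V_\bR,Q)$ with the ambient space of $D$ via the signature determined by $\bh$ is a point the paper leaves implicit, and it is handled correctly.
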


\begin{proof}
This is an existence question, and we will construct a Hodge--Tate limiting mixed Hodge structure $(V,W_\sb(N),F^\sb)$.  The construction will be given as direct sum of ``atomic'' Hodge--Tate limiting mixed Hodge structures with dimensions
\[
  \mathbf{i}_{k,d} \ = \ 
  \left(i_{k,d}^{n,n} \,,\, \ldots \,,\ i^{0,0}_{k,d} \right) \ = \ 
  \big( \underbrace{0,\ldots,0}_{k \ \mathrm{terms}} \,,\, d , \ldots , d \,,\,
         \underbrace{0,\ldots,0}_{k \ \mathrm{terms}} \big) \,;
\]
the latter are illustrated in Figure \ref{f:atomic}.  
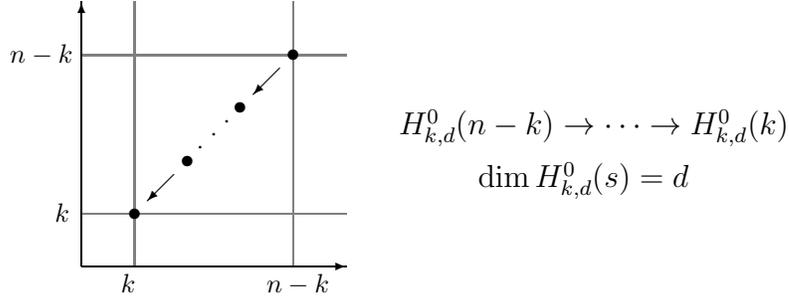
\begin{figure}[!ht]
\setlength{\unitlength}{10pt}
\caption{The atomic limiting mixed Hodge structures $(V_{k,d} , W_\sb(N_{k,d}) , F^\sb_{k,d})$}
\begin{picture}(26,12)(-3,-1)
\color{gray}
\put(2,0){\line(0,1){10}}
\put(8,0){\line(0,1){10}}
\put(0,2){\line(1,0){10}}
\put(0,8){\line(1,0){10}}
\color{black}
\put(0,0){\vector(1,0){10}}
\put(0,0){\vector(0,1){10}}
\multiput(2,2)(2,2){4}{\circle*{0.4}}
\multiput(4.5,4.5)(0.5,0.5){3}{\circle*{0.15}}
\put(7.5,7.5){\vector(-1,-1){1}}
\put(3.5,3.5){\vector(-1,-1){1}}
\begin{footnotesize}
\put(1.5,-1){$k$}
\put(7,-1){$n-k$}
\put(-1,1.7){$k$}
\put(-2.7,7.7){$n-k$}
\end{footnotesize}
\put(12,5){$H^0_{k,d}(n-k) \to \cdots \to H^0_{k,d}(k)$}
\put(15,3){$\tdim\,H^0_{k,d}(s) = d$}
\end{picture}
\label{f:atomic}
\end{figure}
To make this precise, given $0 \le d \in \bZ$ and $0 \le k \le m$, with $m$ as in \eqref{SE:m}, define
\[
  H^0_{k,d}(s) \ = \ \tspan_\bR\{ e_1^s,\ldots,e_d^s \} \ \simeq \ \bR^d \,,
  \quad k \le s \le n-k \,.
\]
Set 
\[
  V_{k,d}(\bR) \ = \ \bigoplus_{k \le s \le n-k} H^0_{k,d}(s) \,,
\]
and define a nilpotent $N_{k,d} \in \tEnd(V_{k,d},\bR)$ by 
\[
  N_{k,d}( e^s_a ) \ = \ \left\{ \begin{array}{ll}
    e^{s-1}_a \,,\ & s > k \,,\\
    0 \,,\ & s=k \,.
  \end{array} \right.
\]
Note that 
\[
  N_{k,d} \left( H^0_{k,d}(s) \right) \ = \ 
  \left\{ \begin{array}{ll}
    H^0_{k,d}(s-1) \,,\ & s > k \,,\\
    0 \,,\ & s=k \,.
  \end{array} \right.
\]
%In particular,
%\[
%  N_{k,d}^{n-2k} \ \not= \ 0 \tand N_{k,d}^{n-2k+1} \ = \ 0 \,.
%\]

Define a $(-1)^n$--symmetric $Q_{k,d}$ on $V_{k,d}$ by 
\[
  (-1)^{n-k-s}\, Q( e_a^s , e_b^t) \ = \ \d_{ab}\,\d^{s+t}_{n} \,.
\]
Then $N_{k,d} \in \tEnd(V_{k,d},Q_{k,d})$ is nilpotent with weight filtration
\[
  W_\ell(N_{k,d}) \ = \ \tspan\{ e^s_a \ : \ 1 \le a \le d \,,\ s \le \ell \} \,.
\]  
Setting
\[
  F^p_{k,d} \ = \ \tspan\{ e^s_a \ : \ 1 \le a \le d \,, s \ge p \}
\]
defines an $\bR$--split, polarized, Hodge--Tate limiting mixed Hodge structure $(V_{k,d} , W_\sb(N_{k,d}) , F^\sb_{k,d})$, with dimensions $\mathbf{i}_{k,d}$.  

We now define $(V,W_\sb(N),F^\sb)$ as follows.  Set
\[
  d_k \ = \ \left\{
  \begin{array}{ll}
    h^{n,0} \,, \ & k=0 \,,\\
    h^{n-k,k}-h^{n-k+1,k-1} \,,\, & 1 \le k \le m \,,
  \end{array} \right.
\]
and
\begin{eqnarray*}
  V & = & \bigoplus_{0 \le k \le m} V_{k,d_k} \,,\\
  N & = & \bigoplus_{0 \le k \le m} N_{k,d_k} \,,\\
  F^p & = & \bigoplus_{0 \le k \le m} F^p_{k,d_k} \,.
\end{eqnarray*}
Then $(V,W_\sb(N),F^\sb)$ is an $\bR$--split, polarized, Hodge--Tate limiting mixed Hodge structure with dimensions
\[
  \mathbf{i} \ = \ \sum \mathbf{i}_{k,d_k} \ = \ \bh \,.
\]  
Therefore $e^{zN}\cdot F^\sb \in D$ for all $\tIm(z) > 0$.
\end{proof}

\begin{remark} \label{R:HT-nPD}
Theorem \ref{T:HT-PD} is false for for Mumford--Tate domains in general.  That is, the inequalities \eqref{E:HT-HN} do not imply the existence of a Hodge--Tate degeneration.  To see this suppose that $(V,Q)$ and $(\tilde V , \tilde Q)$ are two Hodge representations of $G$ realizing $G_\bR/R$ as a Mumford--Tate domain (Section \ref{S:relD}); denote the two (isomorphic) realizations by $D$ and $\tilde D$.
\begin{a_list}
\item
Any limiting mixed Hodge structure $(V,W_\sb(N),F^\sb)$ on $(V,Q)$ determines a limiting mixed Hodge structure $(\tilde V , \tilde W_\sb(N),\tilde F^\sb)$ on $(\tilde V,\tilde Q)$.  Moreover, these two limiting mixed Hodge structures induce the same limiting mixed Hodge structure $(\fg,W_\sb(N)_\fg,F^\sb_\fg)$.
\item 
The reduced limit period mapping $\Phi_\infty$ sends $(F^\sb,N)$ to the closed orbit if and only if it sends $(\tilde F^\sb,N)$ to the closed orbit.
\item
By Proposition \ref{P:I5} the limiting mixed Hodge structure $(V,W_\sb(N),F^\sb)$ is Hodge--Tate if and only if $(\tilde V,\tilde W_\sb(N),\tilde F^\sb)$ is Hodge--Tate.
\item 
Suppose that \eqref{E:HT-HN} fails for $(\tilde V,\tilde Q)$.  Then $\tilde D$, as the Mumford--Tate domain for $(\tilde V,\tilde Q)$, contains \emph{no} point admitting a Hodge--Tate degeneration.  Therefore, even if \eqref{E:HT-HN} holds for $(V,Q)$, the Mumford--Tate domain $D$ can not have a point admitting a Hodge--Tate degeneration.  So to disprove the theorem if suffices to exhibit a pair $(V,Q)$ and $(\tilde V,\tilde Q)$ such that \eqref{E:HT-HN} holds for one but not the other.
\end{a_list}

\noindent Here is a counter--example.

\begin{example}
Fix a symmetric nondegenerate bilinear form $Q$ on $\bR^5$ of signature $(1,4)$.  Set $G_\bR = \tSO(1,4) = \tAut(\bR^5,Q)$.  Then $G_\bC = \tSO(5,\bC)$.  Let the compact dual $\check D = G_\bC/P = \tGr^Q(2,\bC^5)$ be the variety of $Q$--isotropic 2--planes in $\bC^5$, and let $D$ be the period domain parameterizing $Q$--polarized Hodge structures on $\bC^5$ with Hodge numbers $\bh = (2,1,2)$.  Note that \eqref{E:HT-HN} fails.  On the other hand the induced $Q_\fg$--polarized Hodge structure on $\fg_\bR$ has Hodge numbers $\bh_\fg = ( 1,2,4,2,1 )$; in this case \eqref{E:HT-HN} holds.
\end{example}
\end{remark}

%------------------------------------------------------------------------------
\subsection{Non--Hodge--Tate degenerations in period domains} \label{S:nHT-PD}
%------------------------------------------------------------------------------

We recall and prove

\begin{thmI8}
Let $D$ be a period domain parameterizing weight $n$ Hodge structures.  If there exists a limiting mixed Hodge structure $(V,W_\sb(N),F^\sb)$ that maps to the closed $G_\bR$--orbit in $\check D$, but is \emph{not} of Hodge--Tate type, then $n = 2m$ is even and:
\begin{a_list_emph}
\item 
For $k\not=0$, $\tGr^{W_\sb(N)}_{n+k,\tprim}$ is of Hodge--Tate type.  
\emph{(Thus $k$ is even.)}
\item
For $k\not=0$, $\tGr^{W_\sb(N)}_{n+k,\tprim} \not=0$ implies $k \equiv 2$ (mod) $4$.
\item
$\tGr^{W_\sb(N)}_{n,\tprim} \not=0$, and the only nonzero $I^{p,q}_\tprim$, with $p+q=n$, are 
\[
  I^{m+1,m-1}_\tprim \tand
  I^{m-1,m+1}_\tprim \,.
\] 
\end{a_list_emph}
\end{thmI8}

\begin{comment}

\begin{remark}
It is some times useful to consider the schematic \eqref{E:lmhs-pic} for the induced limiting mixed Hodge structure $(\fg,W_\sb(N)_\fg,F^\sb_\fg)$.  For $0 \le a < b \le n$ we have:
\begin{eqnarray*}
  & 
  \tHom(H^{n-a},H^{n-b})(b) \ 
  \stackrel{N}{\longrightarrow}\cdots\stackrel{N}{\longrightarrow} \
  \tHom(H^{n-a},H^{n-b})(-a)   
  & \\
  & 
  \tHom(H^{n-a},H^{n-b})(b-1) \ 
  \stackrel{N}{\longrightarrow}\cdots\stackrel{N}{\longrightarrow} \
  \tHom(H^{n-a},H^{n-b})(1-a)   
  & \\
  & \vdots & \\
  & 
  \tHom(H^{n-a},H^{n-b})(b-a) \ 
  \stackrel{N}{\longrightarrow}\cdots\stackrel{N}{\longrightarrow} \
  \tHom(H^{n-a},H^{n-b})   
  & 
\end{eqnarray*}
When $0 \le a \le n$, there are two cases, depending on the parity of $n$.  For $n$ even we have 
\begin{eqnarray*}
  & 
  \tHom_\mathrm{alt}(H^{n-a},H^{n-a})(a) \ 
  \stackrel{N}{\longrightarrow}\cdots\stackrel{N}{\longrightarrow} \
  \tHom_\mathrm{alt}(H^{n-a},H^{n-a})(-a)   
  & \\
  & 
  \tHom_\mathrm{sym}(H^{n-a},H^{n-a})(a-1) \ 
  \stackrel{N}{\longrightarrow}\cdots\stackrel{N}{\longrightarrow} \
  \tHom_\mathrm{sym}(H^{n-a},H^{n-a})(1-a)   
  & \\
  & \vdots & \\
  & 
  \tHom_\mathrm{a/s}(H^{n-a},H^{n-a})  \,,
  & 
\end{eqnarray*}
where a/s is alt if $a$ is even and sym otherwise.  The picture for $n$ odd is similar: one need only swap the alt and sym.
\end{remark}

\end{comment}

\noindent The proof of Theorem \ref{T:I8} proceeds in four steps.

\subsubsection*{Step 1: odd weight} 

For odd weight $n=2m+1$, we have $G_\bR = \tSp(2g,\bR)$.  This real form is $\bR$--split; whence the closed orbit $\cO_\mathrm{cl}$ is totally real.  It follows from Theorem \ref{T:I6} that, if a degeneration to $\cO_\mathrm{cl}$ exists, then the limiting mixed Hodge structure must be Hodge--Tate.  So from this point on we assume that 
\[
  n \ = \ 2m \,.
\]

\subsubsection*{Step 2: preliminaries for even weight}  

By Theorem \ref{T:I7} we are given the form of the induced limiting mixed Hodge structure $(\fg , W_\sb(N)_\fg,F^\sb_\fg)$.  The issue to is extract from the inclusion
\[
  \fg \ \subset \ \tEnd(V,Q)
\]
the form of original limiting mixed Hodge structure $(V,W_\sb(N),F^\sb)$.  So, the question is: \emph{given the Deligne splitting for $(\fg , W_\sb(N)_\fg,F^\sb_\fg)$, what can we infer about the Deligne splitting for $(V,W_\sb(N),F^\sb)$?}  An especially convenient way to do this is to use the relationship between the weights of $V_\bC$ and the roots of $\fg_\bC = \tEnd(V_\bC,Q)$; this will amount to comparing eigenvalues between the two spaces.

Define $L \in \fg_\bC$ by 
\[
  \left.L\right|_{I^{p,q}_\fg} = q \,\one \,.
\]
Then \eqref{E:Ivg} implies the Deligne splitting is the $(L,\overline L)$--eigenspace decomposition of $\fg_\bC$,
\[
  \left. (L,\overline L) \right|_{I^{p,q}_\fg} \ = \ (q,p)\,\one \,.
\]
Likewise, in analogy with the discussion of Sections \ref{S:ind1} and \ref{S:ind2}, the Deligne splitting of $V_\bC$ is an $(L,\overline L)$--eigenspace decomposition.  To be precise, $(L,\overline L)$ acts on $I^{p+m,q+m}$ by the scalars $(q,p)$.  It will be convenient to shift the bigrading by 
\[
  \tilde I^{p,q} \ = \ I^{p+m,q+m} \,,
  \quad\hbox{so that}\quad
  \left. (L,\overline L) \right|_{\tilde I^{p,q}} \ = \ (q,p) \,\one\,.
\]
Note that, like $\op I^{p,q}_\fg$, the splitting $\op \tilde I^{p,q}$ is symmetric about the line $p+q=0$.

Let $\fh$ be a Cartan subalgebra containing the grading elements $L$ and $\overline L$.  We may assume without loss of generality that $\fh$ is closed under conjugation.  Let $\Lambda(V) \subset \fh^*$ denote the weights of the standard representation $V_\bC$, and given $\lambda \in \Lambda(V)$, let $V^\lambda \subset V_\bC$ denote the weight space.  Then the $(L,\overline L)$--eigenvalues of $V_\bC$ are $\{ (L(\lambda),\overline L(\lambda) \ : \ \lambda \in \Lambda(V) \}$; equivalently,
\begin{subequations} \label{SE:wts+rts}
\begin{equation} \label{E:wts}
  \tilde I^{p,q} \ = \ 
  \bigoplus_{\mystack{L(\lambda)=q}{\overline L(\lambda)=p}} V^\lambda \,.
\end{equation}
Likewise, the $(L,\overline L)$--eigenvalues of $\fg_\bC$ are $\{ (\a(L),\a(\overline L)) \ : \ \a \in \Delta\,\cup\,\{0\} \}$; that is, 
\begin{equation} \label{E:rts}
  I^{p,q}_\fg \ = \ 
  \bigoplus_{\mystack{\a(L)=q}{\a(\overline L)=p}} \fg^\a
  \ \Big( \ \op \ \fh \quad \hbox{if } \ (p,q) = (0,0) \Big) \,. 
\end{equation}
The relationship between the roots of $\fg_\bC$ and the weights $\Lambda(V)$ of $V_\bC$ is 
\begin{equation} \label{E:wtsVrts}
  \Delta \,\cup\, \{0\} \ = \ \{ \a = \lambda+\mu \ : \ 
  \lambda\not=\m \in \Lambda(V) \} \,.
\end{equation}
\end{subequations}
This yields relationships between the $(L,\overline L)$--eigenvalues on $V_\bC$ and $\fg_\bC$.  In particular, \eqref{SE:wts+rts} implies the following:
\begin{equation} \label{E:Ipq}
\begin{array}{l}
  \hbox{Suppose $\tilde I^{p,q} , \tilde I^{r,s}\not=0$ and $(p,q)\not=(r,s)$.}\\
  \hbox{Then $I^{p+q,r+s}_\fg \not=0$.}
\end{array}
\end{equation}

\begin{remark}[Properties of $\tilde I^{p,q}$] \label{R:tI}
In the arguments that follow it will be helpful to keep in mind that both $\tilde I^{p,q}$ and $I^{p,q}_\fg$ are symmetric about the $p+q=0$ and $p-q=0$ lines: if $\tilde I^{p,q}$ is nonzero, then so are $\tilde I^{q,p}$, $\tilde I^{-q,-p}$ and $\tilde I^{-p,-q}$.  (Similarly for $I^{p,q}_\fg$.)  In fact they all have the same dimension.  The symmetry about the line $p-q=0$ is due to the fact that both the roots $\Delta$ and the weights $\Lambda(V)$ are closed under conjugation (because $\fh$, $\fg$ and $V$ are defined over $\bR$).  The symmetry about the line $p+q=0$ is due to the fact that the $N$--strings are uninterrupted and centered on the line $p+q=0$.  This also implies the following: suppose that $\tilde I^{p,q}$ is nonzero.  Then $\tilde I^{k,-k}$ is nonzero if $p-q = 2k$, and $\tilde I^{k+1,-k}$ and $\tilde I^{k,-k-1}$ are nonzero if $p-q = 2k+1$.
\end{remark}

\subsubsection*{Step 3: Suppose there exists $\tilde I^{p,q}\not=0$ with $p-q = 2k+1$}

Then $\tilde I^{k+1,-k} , \tilde I^{k,-k-1} \not=0$ (Remark \ref{R:tI}).  So \eqref{E:Ipq} implies $I^{2b+1,-2b-1}_\fg = 0$, and \eqref{E:cp3} forces $c=\pm1$.  So, suppose that $\tilde I^{\pm 1,0}$ and $\tilde I^{0,\pm1}$ are all nonzero.  (As discussed in Remark \ref{R:tI}, if any one of the four is nonzero, then all four are nonzero.)  If $\tilde I^{r,s}\not=0$, then \eqref{E:Ipq} implies 
\[
  I^{r\pm1,s}_\fg \tand I^{r,s\pm1}_\fg \quad\hbox{are nonzero.}
\]
Given \eqref{E:cp2}, this forces $|r-s| \le 1$.  Whence the Deligne splitting must be of the form depicted in Figure \ref{f:Del1}.  But this implies that \eqref{E:HT-HN} holds.  Whence Theorem \ref{T:HT-PD} implies the limiting mixed Hodge structure is Hodge--Tate, contradicting our hypothesis.  To conclude:
\begin{center}
\emph{If $\tilde I^{p,q} \not=0$, then $p-q$ is even.}
\end{center}
\begin{figure}[!ht]
\caption{The Deligne splitting $V_\bC = \op I^{p,q}$.}
\setlength{\unitlength}{10pt}
\begin{picture}(10,10)(0,0)
\put(0,0){\vector(1,0){9}}
\put(0,0){\vector(0,1){9}}
\multiput(0,0)(1,1){9}{\circle*{0.4}}
\multiput(0,1)(1,1){8}{\circle*{0.4}}
\multiput(1,0)(1,1){8}{\circle*{0.4}}
\end{picture}
\label{f:Del1}
\end{figure}

\subsubsection*{Step 4: Suppose there exists $\tilde I^{p,q}\not=0$ with $p-q = 2k>0$}

Then $\tilde I^{k,-k}$ and $\tilde I^{-k,k}$ are nonzero (Remark \ref{R:tI}).  Since $N\not=0$, there exists some nonzero $\tilde I^{r,s}$ with $r+s \not=0$.  Then \eqref{E:Ipq} implies $I^{r-k,s+k}_\fg$ and $I^{r+k,s-k}_\fg$ are nonzero, and \eqref{E:cp2} forces $|r-s\pm2k| \le 2$.  By hypothesis $2k\ge2$, so it must be the case that $k=1$ and $r-s=0$.  Thus, the Deligne splitting $V_\bC = \op I^{p,q}$ is as depicted in Figure \ref{f:Del2}.a.  (The induced Deligne splitting $\fg_\bC = \op I^{p,q}_\fg$ is as in Figure \ref{f:Del2}.b.)  This establishes Theorem \ref{T:I8}(a).
\begin{figure}[!ht]
\caption{The Deligne splittings $V_\bC = \op I^{p,q}$ and $\fg_\bC = \op I^{p,q}_\fg$.}
\setlength{\unitlength}{10pt}
\begin{picture}(10,12)(0,-2)
\put(0,0){\vector(1,0){9}}
\put(0,0){\vector(0,1){9}}
\multiput(0,0)(1,1){9}{\circle*{0.4}}
\put(3,5){\circle*{0.4}}
\put(5,3){\circle*{0.4}}
\put(1,-1.5){Figure \ref{f:Del2}.a}
\end{picture}
\hspace{20pt}
\setlength{\unitlength}{9pt}
\begin{picture}(10,13)(-5,-7.5)
\put(-5,0){\vector(1,0){10}}
\put(0,-5){\vector(0,1){10}}
\multiput(-5,-5)(1,1){11}{\circle*{0.4}}
\multiput(-3,-5)(1,1){9}{\circle*{0.4}}
\multiput(-5,-3)(1,1){9}{\circle*{0.4}}
\put(-2,2){\circle*{0.4}}
\put(2,-2){\circle*{0.4}}
\put(-3.5,-7){Figure \ref{f:Del2}.b}
\end{picture}
\label{f:Del2}
\end{figure}

The hypothesis that $(V,W_\sb(N),F^\sb)$ is not Hodge--Tate implies
\begin{equation} \label{E:I11}
  \tilde I^{-1,1} = \tilde I^{-1,1}_\tprim \ \not= \ 0 \,.
\end{equation}
Moreover, if $\tilde I^{p,p}_\tprim \not=0$, then \eqref{E:Ipq} and \eqref{E:I11} imply that $I^{p-1,p+1}_{\fg,\tprim} \not=0$.  By Theorem \ref{T:cp_orb}, this forces $p \ge 1$ to be odd.  This establishes Theorem \ref{T:I8}(b).  Finally, since $\tilde I^{-1,1} = I^{m-1,m+1}$, we see that \eqref{E:I11} establishes Theorem \ref{T:I8}(c) and completes the proof.  

%------------------------------------------------------------------------------
\subsection{All degenerations are induced from maximal Hodge--Tate degenerations}
%------------------------------------------------------------------------------

In a suitably interpreted sense all degenerations are induced from a (maximal) degeneration of Hodge--Tate type.  Some care must be taken with this statement, as it is not necessarily the case that the underlying degeneration arises algebro--geometrically: this is a statement about the orbit structure and representation theory associated with the $\tSL_2$--orbit approximating an arbitrary degeneration, which may or may not arise algebro--geometrically.

Fix a Mumford--Tate domain $D \subset \check D$.  Let $N \in \fg_\bR$ be a nilpotent element and consider the corresponding boundary component $B(N) = \tilde B(N) / \texp(\bC N)$.  Given $F^\sb \in \tilde B(N)$, let $(F^\sb , W(N)_\sb)$ denote the corresponding limiting mixed Hodge structure on $\fg$.  Let $\fg_\bC = \op I^{p,q}_\fg$ be the Deligne splitting.  Without loss of generality, the limiting mixed Hodge structure is $\bR$--split.  The diagonal subalgebra
\[
  \fs_\bC \ = \ \bigoplus_p I^{p,p}_\fg
\]
is a conjugation stable subalgebra of $\fg_\bC$ containing $N$.  Let $\fs_\bR = \fs_\bC \cap \fg_\bR$ denote the real form.  Moreover, as the zero eigenspace for the grading element $L - \overline L$, the subalgebra $\fs_\bR$ is necessarily a Levi subalgebra, and therefore reductive. 

\begin{lemma}
The limiting mixed Hodge structure $(F^\sb_\fg,W_\sb(N)_\fg)$ on $\fg_\bR$ induces a sub--limiting mixed Hodge structure $(F^\sb_\fs , W_\sb(N)_\fs)$ on $\fs_\bR$ by 
\[
  F^\sb_\fs \ = \ F^\sb \,\cap\, \fs_\bC 
  \ = \ \bigoplus_{q\ge p} I^{q,q}_\fg \tand
  W_\sb(N)_\fs \ = \ W_\sb(N)_\fg \,\cap\, \fs_\bR 
  \ = \ \bigoplus_{q \le p} I^{q,q}_{\fg,\bR} \,.
\] 
\end{lemma}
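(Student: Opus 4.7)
The plan is to verify that $(\fs_\bR, W_\sb(N)_\fs, F^\sb_\fs)$ together with $N|_\fs$ is a sub-LMHS of the ambient one with $\bR$-split Deligne splitting $\fs_\bC = \bigoplus_p I^{p,p}_\fg$. First I would record the basic compatibilities: $\bR$-splitness of the ambient LMHS gives $\overline{I^{p,p}_\fg} = I^{p,p}_\fg$, so $\fs_\bC$ is conjugation stable with real form $\fs_\bR = \bigoplus_p I^{p,p}_{\fg,\bR}$; intersecting the standard formulas \eqref{E:delWF} with the diagonal $\bigoplus_r I^{r,r}_\fg$ yields the two identities in the statement; and $N \in I^{-1,-1}_\fg \subset \fs_\bC$ together with the fact that $\fs_\bC$ is a subalgebra means $N|_\fs$ preserves both filtrations.

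Second, by the characterization of $\bR$-split MHS Deligne splittings recalled in Section \ref{S:rlpm}, the bigrading $\fs_\bC = \bigoplus_p I^{p,p}_\fg$ (satisfying $\overline{I^{p,p}_\fg} = I^{p,p}_\fg$ and whose sums recover $W_\sb(N)_\fs$ and $F^\sb_\fs$ as in the statement) defines an $\bR$-split MHS on $\fs_\bR$, which is automatically a sub-MHS of the ambient one since $\fs_\bC$ is bigraded. The one non-formal step is to verify that $W_\sb(N)_\fs$ equals the monodromy weight filtration of $N|_\fs$ on the nose. For this I would complete $N$ to the canonical $\fsl_2$-triple $\{N,Y,N^+\}$ associated with the $\bR$-split LMHS on $\fg$; by the classical construction of \cite{MR840721}, $Y$ is of Hodge type $(0,0)$ and $N^+$ of type $(1,1)$, so the entire triple lies in $\fs_\bC$. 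Thus $\fs_\bC$ is a finite-dimensional $\fsl_2$-submodule whose $Y$-eigenspace of eigenvalue $2p$ is exactly $I^{p,p}_\fg$ (odd $Y$-eigenvalues do not occur), and standard $\fsl_2$-representation theory furnishes the Lefschetz isomorphisms $N^{2p}:\tGr^{W_\sb(N)_\fs}_{2p}\to\tGr^{W_\sb(N)_\fs}_{-2p}$ for $p\ge 0$, identifying $W_\sb(N)_\fs$ with the monodromy weight filtration of $N|_\fs$. The Killing form $Q_\fg$ pairs $I^{p,p}_\fg$ nondegenerately with $I^{-p,-p}_\fg$, so it restricts nondegenerately to $\fs$, and Hodge--Riemann positivity on the primitive graded pieces $\tGr^{W_\sb(N)_\fs}_{2p,\tprim}\subset\tGr^{W_\sb(N)_\fg}_{2p,\tprim}$ is inherited from the ambient LMHS.

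The main obstacle is the $\fsl_2$-containment step: one must invoke the (classical but not purely formal) fact that for an $\bR$-split LMHS the neutral and nilpositive elements of the canonical $\fsl_2$-triple lie in $I^{0,0}_\fg$ and $I^{1,1}_\fg$ respectively. Granted this, the entire proof reduces to routine bookkeeping with the Deligne splitting and elementary $\fsl_2$-representation theory.
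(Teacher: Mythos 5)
Your proof is correct, and it takes the same route the paper does --- the paper's entire proof is the one-line assertion that the lemma ``follows directly from the definition of limiting mixed Hodge structures,'' and your argument is precisely the detailed verification behind that assertion. The one genuinely non-formal input you identify --- that for an $\bR$--split limiting mixed Hodge structure the canonical $\fsl_2$--triple $\{N,Y,N^+\}$ has $Y \in I^{0,0}_\fg$ and $N^+ \in I^{1,1}_\fg$, hence lies in $\fs_\bC$, so that the Lefschetz isomorphisms and the polarization conditions restrict to the diagonal --- is exactly the right point to make explicit, and the rest of your bookkeeping (conjugation stability, the intersection formulas via \eqref{E:delWF}, nondegeneracy of $Q_\fg$ on $\fs$, and inheritance of Hodge--Riemann positivity on primitive pieces) is sound.
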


\begin{proof}
This follows directly from the definition of limiting mixed Hodge structures (Section \ref{S:NO+MHS}).
\end{proof}

Let $S_\bC \subset G_\bC$ be the connected Lie subgroup with Lie algebra $\fs_\bC$, and set
\[
  \check \sD \ = \ S_\bC \cdot F^\sb \tand \sD \ = \ \check \sD \,\cap\, D \,.
\]
By [CKS], $F^\sb_\fg(z) = e^{zN} F^\sb_\fg \in D$ for all $\tIm(z)>0$; equivalently, the Hodge filtration $F^\sb_\fg(z)$ defines a Hodge structure $\varphi_z$ on $\fg_\bR$ (Section \ref{S:dfn_phs}).  Likewise, $F^\sb_\fs(z) = e^{zN}F^\sb_\fs \in \sD$ defines a Hodge structure $\left.\varphi_z\right|_\fs$ on $\fs_\bR$ for all $\tIm(z) > 0$.  This implies
\begin{quote}
  \emph{$\sD$ carries the structure of a Mumford--Tate subdomain of $D$ with 
  compact dual $\check \sD$.}
\end{quote}
In particular, $\sD$ is an open $S_\bR$--orbit in $\check\sD$.  From Proposition \ref{P:I9}, and the fact that $(F^\sb_\fs,W_\sb(N)_\fs)$ is Hodge--Tate, we see that 
\begin{center}
  \emph{$S_\bR \cdot F^\sb$ is the closed $S_\bR$--orbit in $\check\sD$.}
\end{center}
In this sense, 
\begin{quote}
\emph{the nilpotent orbit $(F^\sb_\fs,N)$ is a maximal degeneration of the Hodge structure on $\fs_\bR$.}
\end{quote}

So far we have viewed $\fs_\bR$ as having sub--Hodge structures $\left.\varphi_z\right|_\fs$ that are restrictions of Hodge structures $\varphi_z$ on $\fg_\bR$.  In fact a stronger statement holds: the circle $\varphi_z$ is contained in $S_\bR$.  

\begin{lemma}
The Hodge structure $(\fg,\varphi_z)$ is given by a Hodge representation of $S_\bR$.
\end{lemma}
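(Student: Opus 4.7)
The plan is to show that the homomorphism $\varphi_z : S^1 \to G_\bR$ defining the Hodge structure on $\fg_\bR$ at $F^\sb_\fg(z)$ actually factors through $S_\bR$; then the adjoint action of $S_\bR$ on $\fg$ composed with this circle is the desired Hodge representation.

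First I would locate an $\fsl_2$-triple inside $\fs_\bR$. Since $(F^\sb_\fg, W_\sb(N)_\fg)$ is $\bR$-split, complete $N$ to an $\fsl_2$-triple $\{N, Y, N^+\} \subset \fg_\bR$ as in \eqref{E:sl2trip}. By hypothesis $N$ is of Hodge type $(-1,-1)$, so $N \in I^{-1,-1}_\fg \subset \fs_\bC$; the neutral element $Y$ acts on $I^{p,q}_\fg$ by the scalar $p+q$, so $Y$ is a semisimple element of the Cartan part of $\fg_0 \subset I^{0,0}_\fg$, hence $Y \in \fs_\bC$; and $N^+$ is of Hodge type $(1,1)$, so $N^+ \in I^{1,1}_\fg \subset \fs_\bC$. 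Each element is real, so the entire triple lies in $\fs_\bR$, and the connected $\tSL_2(\bR)$-subgroup of $G_\bR$ it generates is contained in $S_\bR$.

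Next I would invoke Schmid's $\fsl_2$-orbit theorem, which in the $\bR$-split case is exact: the nilpotent orbit $z \mapsto e^{zN} \cdot F^\sb_\fg$ coincides with an orbit of the $\tSL_2(\bR)$-subgroup above, identifying $\bH \to D$ with the orbit map. In particular, $F^\sb_\fg(iy) = y^{Y/2} \cdot F^\sb_\fg(i)$ for $y > 0$, and the Hodge circle $\varphi_{iy}$ is obtained by conjugating the reference circle $\varphi_i$ at $F^\sb_\fg(i)$ by $y^{Y/2}$. Explicitly $\varphi_i(S^1)$ is the $\tSO(2)$ generated by $\tfrac{1}{2}(N^+ - N) \in \fs_\bR$, so $\varphi_{iy}(S^1)$ is generated by $\tAd(y^{Y/2})\bigl(\tfrac{1}{2}(N^+-N)\bigr) = \tfrac{1}{2}(yN^+ - y^{-1}N) \in \fs_\bR$. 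Hence $\varphi_{iy}(S^1) \subset S_\bR$. For general $z = x + iy$ with $y > 0$, one has $F^\sb_\fg(z) = e^{xN} \cdot F^\sb_\fg(iy)$ with $e^{xN} \in S_\bR$, so $\varphi_z = e^{xN}\,\varphi_{iy}\,e^{-xN}$, again landing in $S_\bR$. This proves that $\varphi_z : S^1 \to G_\bR$ factors through $S_\bR$, so $(\fg,\varphi_z)$ is given by the Hodge representation $\tAd : S_\bR \to \tAut(\fg_\bR, Q_\fg)$.

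The main obstacle is the correct invocation of Schmid's $\fsl_2$-orbit theorem together with its exact form in the $\bR$-split case: one needs the identification of the nilpotent orbit with an orbit of the real $\tSL_2$-subgroup generated by $\{N, Y, N^+\}$, along with the explicit description of the Hodge circle at $F^\sb_\fg(iy)$ as a conjugate of the reference circle by $y^{Y/2}$. Once the $\fsl_2$-triple is placed inside $\fs_\bR$, everything else follows by transporting real elements along the orbit.
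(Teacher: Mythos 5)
Your reduction to the single point $z=i$ is sound: $F^\sb_\fg(iy) = y^{Y/2}\cdot F^\sb_\fg(i)$ and $F^\sb_\fg(z) = e^{xN}\cdot F^\sb_\fg(iy)$, so $\varphi_z$ is conjugate to $\varphi_i$ by elements of $S_\bR$ (you correctly place the whole $\bR$--split $\fsl_2$--triple $\{N,Y,N^+\}$ inside $\fs_\bR$, since $N\in I^{-1,-1}_\fg$, $Y\in I^{0,0}_\fg$, $N^+\in I^{1,1}_\fg$). The gap is in the remaining step: the identification $\varphi_i(S^1) = \exp\bigl(\bR\cdot\tfrac12(N^+-N)\bigr)$ is false in general. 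That circle is the Hodge circle of the \emph{standard weight--one representation of the $\fsl_2$ itself}, not of the weight--zero Hodge structure on all of $\fg$. The generator of $\varphi_i$ is (up to a constant) the grading element $L_i$ of the Hodge structure at $e^{iN}F^\sb_\fg$, which must act on $\fg^{p,-p}_{e^{iN}F^\sb}$ by the eigenvalue $2p$; this element generally does not lie in $\tspan\{N,Y,N^+\}$. Concretely, the trivial $\fsl_2$--isotypic component of $\fg$ is $\op_p I^{p,-p}_{\fg,\tprim}$, on which $\tad\bigl(\tfrac12(N^+-N)\bigr)$ vanishes identically; yet unless the limiting mixed Hodge structure is of Hodge--Tate type this component has nonzero pieces with $p\not=0$, on which $\varphi_i(u)$ acts by $u^{2p}\not=1$. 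Since the whole point of this section is that only the restriction to $\fs$ is Hodge--Tate, the case you would need to exclude is exactly the case at hand.

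What is still missing is therefore the assertion $\varphi_i(S^1)\subset S_\bR$ itself, i.e.\ $L_i\in\fs_\bC$, and this is the actual content of the lemma. The paper's argument supplies it without any appeal to the $\fsl_2$--orbit theorem: since $\fs_\bR$ is a sub--Hodge structure of $(\fg,\varphi_z)$, so is its semisimple part $\fs^\tss_\bR$, hence $L_z$ induces a grading of $\fs^\tss_\bC$ which (Remark \ref{R:grelem}) is realized by a grading element $L'_z\in\fs_\bC$; then $L_z-L'_z$ centralizes $\fs_\bC$, and because $\fs_\bC$ is a \emph{Levi} subalgebra its centralizer in $\fg_\bC$ equals its center $\fz_\bC\subset\fs_\bC$, whence $L_z\in\fs_\bC$. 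You would need to either reproduce this argument or find a substitute for your incorrect explicit description of $\varphi_i$; the conjugation bookkeeping alone does not close the proof.
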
  

\noindent In this sense, 
\begin{quote}
\emph{the degeneration of Hodge structure on $\fg$ given by $(F^\sb_\fg,N)$ is induced from a maximal degeneration of Hodge structure on $\fs_\bR$.}
\end{quote}

\begin{proof}
We need to show that the circle $\varphi_z$ is contained in $S_\bC$, \cf\cite{MR2918237}.  The corresponding grading element $L_z$ (Sections \ref{S:PHSgrelem1} and \ref{S:PHSgrelem2}) is equal to $\varphi'(1)/4\pi\bi$, \cf\cite[Section 2.3]{schubVHS}.  So $\varphi_z \subset S_\bR$ if and only if $L_z$, a priori an element of $\fg_\bC$, is an element of $\fs_\bC$.  

Decompose $\fs_\bC = \fz_\bC \op \fs_\bC^\tss$ into its center $\fz_\bC$ and semisimple factor $\fs_\bC^\tss = [\fs_\bC,\fs_\bC]$.  Since $\fs_\bR$ is a sub--Hodge structure of $\fg_\bR$, with respect to $\varphi_z$, it follows that $\fs_\bR^\tss$ is also a sub--Hodge structure.  Therefore, $L_z$ determines a graded decomposition of $\fs_\bC^\tss$.  As discussed in Remark \ref{R:grelem}, this graded decomposition is also induced by a grading element $L'_z \in \fs_\bC$.  It follows that $L_z - L_z' \in \fg_\bC$ is contained in the centralizer of $\fs_\bC$.  It is here that the fact that $\fs_\bC$ is a Levi subalgebra is key, for it is a well--known property of Levi subalgebras that the centralizer in $\fg_\bC$ is equal to the center $\fz_\bC$.  Therefore, $L_z - L_z' \in \fz_\bC \subset \fs_\bC$.  Whence $L_z \in \fs_\bC$.
\end{proof}

\begin{remark}
Since $\fs$ is reductive, $\fg=\fs \op \fs^\perp$ as a $\fs$--module.  The idea here is that the essential structure/relationship is between $N$ and the Levi subalgebra $\fs$; the remaining structure on $\fg = \fs \op \fs^\perp$, that is the structure on $\fs^\perp$, is induced from the $\fs$--module structure on $\fs^\perp$.  This sort of idea does back to Bala and Carter's classification \cite{MR0417306, MR0417307} of nilpotent orbits $\cN \subset \fg_\bC$, where the idea is to look at minimal Levi subalgebras $\fl$ containing a fixed $N \in \cN$, and to classify the pairs $(N,\fl)$.  (In fact, the idea goes back farther to Dynkin \cite{MR0047629_trans}, who looked at minimal reductive subalgebras containing $N$, but this approach does not seem to work as well.)
\end{remark}

%------------------------------------------------------------------------------
%\subsection{The totally real case}
%------------------------------------------------------------------------------

%\begin{bcirclist}
%\item
%Constraint $h^{n,0} \le h^{n-1,1} \le \cdots$ for period domain case for polarizability.
%\end{bcirclist}

%------------------------------------------------------------------------------
%\subsection{Even weight period domains $(\natural)$}
%------------------------------------------------------------------------------

%------------------------------------------------------------------------------
%\subsection{Adjoint domains $(\natural)$}
%------------------------------------------------------------------------------

\hspace{5pt}

\bigskip

%------------------------------------------------------------------------------
\def\cprime{$'$} \def\Dbar{\leavevmode\lower.6ex\hbox to 0pt{\hskip-.23ex
  \accent"16\hss}D}

%------------------------------------------------------------------------------
\end{document}